\def\?[#1]{\textbf{[#1]}\marginpar{\Large{\textbf{??}}}}
\newtheorem{thm}{Theorem}
\newtheorem{prop}{Proposition}[section]
\newtheorem{Def}[prop]{Definition}
\newtheorem{lem}[prop]{Lemma}
\newtheorem{cor}[prop]{Corollary}
\newtheorem{rem}{Remark}
\numberwithin{equation}{section}
\newcommand{\mc}{\mathcal}
\newcommand{\rr}{\mathbb{R}}
\newcommand{\cc}{\mathbb{C}}
\newcommand{\hh}{\mathbb{H}}
\newcommand{\la}{\lambda}
\newcommand{\eps}{\epsilon}
\newcommand{\pl}{\partial}
\newcommand{\x}{\times}
\newcommand{\til}{\widetilde}
\newcommand{\bbar}{\overline}
\newcommand{\cjd}{\rangle}
\newcommand{\cjg}{\langle}
\newcommand{\demi}{\tfrac{1}{2}}
\DeclareMathOperator{\Res}{Res}
\DeclareMathOperator{\Eig}{Eig}
\let\Im=\Imag
\let\Re=\Real
\DeclareMathOperator{\WF}{WF}
\DeclareMathOperator{\Tr}{Tr}
\newcommand{\tu}[1]{\textup{#1}}
\newcommand{\Abb}[4]{\left\{ \begin{array}{ccc}
                               #1 & \rightarrow &#2\\
			       #3 &\mapsto &#4
                               \end{array}\right.}
\renewcommand{\Re}{\operatorname{Re}}
\renewcommand{\Im}{\operatorname{Im}}
\newcommand{\N}{\mathbb{N}}
\newcommand{\R}{\mathbb R}
\newcommand{\Z}{\mathbb Z}
\newcommand{\C}{\mathbb{C}}
\newcommand{\intd}{\mathop{}\!\mathrm{d}}
\newcommand{\g}{\mathfrak g}
\newcommand{\p}{\mathfrak p}
\newcommand{\n}{\mathfrak n}
\renewcommand{\a}{\mathfrak a}
\newcommand{\Ad}{\operatorname{Ad}}
\newcommand{\cpic}[5]{\begin{figure}[#5]
 \centering \makebox[0cm]{\includegraphics[width= #1]{./#2}}
\caption{#3}
\label{fig:#4}
\end{figure} }
\newcommand{\bn}{{\overline{n}}}
\newcommand{\bN}{{\overline{N}}}
\newcommand{\hspc}{\mathbf{H}}
\newcommand{\bhspc}{\partial \mathbf{H}}
\newcommand{\cH}{{\check{H}_0}}
\title[Invariant Ruelle densities]{High frequency limits for 
invariant Ruelle densities}
 \author{Colin Guillarmou}
 \email{colin.guillarmou@math.u-psud.fr}
\address{CNRS, Universit\'e Paris-Sud, D\'epartement de Math\'ematiques, 91400
Orsay, France}
 \author{Joachim Hilgert}
\email{hilgert@math.upb.de}
 \address{Universit\"at Paderborn, Warburgerstr. 100, 33098 Paderborn, Germany}
 \author{Tobias Weich}
\email{weich@math.upb.de}
\address{Universit\"at Paderborn, Warburgerstr. 100, 33098 Paderborn, Germany}
\date{\today}
\begin{document}

\begin{abstract}
We establish an equidistribution result for Ruelle resonant states on compact locally symmetric spaces of rank one. More precisely, we prove that among the first band Ruelle resonances there is a density one subsequence such that the respective products of resonant and co-resonant states converge weakly to the Liouville measure. We prove this result by establishing an explicit quantum-classical correspondence between eigenspaces of the scalar Laplacian and the resonant states of the first band of Ruelle resonances which also leads to a new description of Patterson-Sullivan distributions. 
\end{abstract}

\maketitle

\section{Introduction}

Let $X$ be a smooth Anosov vector field on a compact Riemannian manifold $\mathcal M$. Then the resolvent $R(\lambda):=(-X-\lambda)^{-1}:L^2(\mathcal M)\to L^2(\mathcal M)$ is
holomorphic for $\tu{Re}(\lambda)\gg 0$ and has a meromorphic continuation to $\C$ 
\cite{Liv04, BL07,FSj11,DZ16} as a family of continuous operators $R(\lambda):C^\infty(\mathcal M) \to \mathcal D'(\mathcal M)$. The poles of this meromorphic continuation are called \emph{Ruelle resonances}. 
Given a pole $\lambda_0$ of the resolvent, (minus) the residue of the resolvent $R(\la)$ at $\la_0$
 is a finite rank operator $\Pi_{\lambda_0}$ and we call its range $\tu{Ran}(\Pi_{\lambda_0})\subset \mathcal D'(\mathcal M)$ the space of \emph{generalized Ruelle resonant states}. The generalized resonant states are known to be distributions which are supported on the whole manifold $\mathcal M$ \cite{Wei17} and they lie in $\ker (-X-\la_0)^J$ for some $J\geq 1$, and $J=1$ if and only if there are no Jordan blocks. 
Furthermore, to each Ruelle resonance $\lambda_0$ we can define a canonical generalized density
in the following way: Because the wavefront set of $\Pi_{\lambda_0}$ is precisely known \cite[Theorem 1.7]{FSj11}\cite[Proposition 3.3]{DZ16}, there is a well defined notion of a trace of $\Pi_ {\lambda_0}$, the so called flat trace $\Tr^\flat$ and we can define the following continuous linear functional
\[
 \mathcal T_{\lambda_0}:\Abb{C^\infty(\mathcal M)}{\C}{f}{\Tr^\flat(f\Pi_{\lambda_0})}
\]
If $\lambda_0=0$, then the functional $\mathcal T_{\lambda_0}$ is given by the SRB measure\footnote{More precisely by an SRB measure, because the eigenvalue $\lambda_0$ might be degenerate and the SRB measure not unique.}, see \cite{BL07}. For a general resonance $\la_0\in \C$ these functionals are only distributional densities and not measures. 
As $X$ commutes with $\Pi_{\lambda_0}$, they are still invariant under the flow, i.e. $X\mc{T}_{\la_0}=0$, and we call them \emph{invariant Ruelle densities}. Note that these invariant Ruelle densities have also explicit expressions in terms of the Ruelle resonant states: In the simplest case of a first order pole of multiplicity one the invariant density is simply the distributional product of a resonant and co-resonant state, where \emph{co-resonant states} are resonant states for the flow in backward time. 

We want to study \emph{high frequency limits} (also called \emph{semiclassical limits}) of these invariant Ruelle distributions. For Ruelle resonances the reasonable notion of semiclassical limit is to fix a range in the real part $\tu{Re}(\lambda)>-C$ and consider $|\tu{Im}(\lambda)|\to\infty$. In this limit there have recently been established several results on the distributions of resonances such as Weyl laws \cite{FSj11, DDZ14, FT17b} or band structures 
\cite{FT13, FT17}. 

\textbf{High-frequency limits.} 
 If $G/K$ is a rank one Riemannian symmetric space of noncompact type, $\Gamma\subset G$ a co-compact torsion free discrete subgroup, then the locally symmetric space 
$\mathbf M:= \Gamma\backslash G/K$ is a compact Riemannian manifold of strictly negative curvature. Its geodesic flow on the unit sphere bundle $\mathcal M:= S\mathbf M$ is Anosov. Any compact manifold of constant negative curvature can be realized in this way, but the rank one locally symmetric spaces contain also families with nonconstant sectional curvature. In constant curvature the spectrum is known to obey an exact band structure \cite{DFG15, KW19a}, i.e. for any Ruelle resonance $\lambda_0$ one has either $\tu{Im}(\lambda_0)=0$, or $\tu{Re}(\lambda_0) \in -\rho - \N_0$, where $\rho>0$ is the positive constant that is associated to a Riemanian symmetric space by taking half the sum of its positive restricted roots. Our first result is the following equidistribution theorem.

\begin{thm}\label{thm: Main Theorem}
 Let $\mathbf M$ be a compact locally symmetric space of rank one, $\mathcal M=S\mathbf M$
 be the unit sphere bundle and $\intd\mu_{\tu{L}}$ the Liouville measure on $\mathcal M$.
 Let $r_n\in \R^+$ be such that $\lambda_n=-\rho+ir_n$ are the Ruelle resonances with $\Re(\lambda_n)=-\rho$, $\Im(\lambda_n)>0$ for the geodesic flow on $\mathcal M$. Then there exists a  subsequence $(r_{k_n})_{n>0}\subset (r_n)_{n>0}$, such that
 \begin{itemize}
  \item $\mathcal T_{-\rho + ir_{k_n}}$ converges weakly towards $\intd \mu_{\tu{L}}$ as $n\to\infty$.
 \item The subsequence is of density one, i.e.
 \[
   \lim_{N\to\infty}\frac{\sum_{k_n<N} \dim(\tu{Ran}\Pi_{-\rho + ir_{k_n}})}{\sum_{n<N} \dim(\tu{Ran}\Pi_{-\rho + ir_n})}=1.
 \]  
\end{itemize}
\end{thm}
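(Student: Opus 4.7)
My plan is to reduce the statement to the classical quantum ergodicity theorem of Shnirelman--Zelditch--Colin de Verdi\`ere for the scalar Laplacian $\Delta_{\mathbf M}$. Since $\mathbf M$ is negatively curved, its geodesic flow on $\mathcal M$ is ergodic with respect to the Liouville measure, so along a density-one subsequence of eigenfunctions of $\Delta_{\mathbf M}$ the Wigner (microlocal lift) distributions converge weakly to $\intd\mu_{\tu{L}}$. The bridge from Ruelle densities to Laplace eigenfunctions is provided by the explicit quantum-classical correspondence announced in the abstract.

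\textbf{Step 1: the correspondence and the formula for $\mathcal T$.} I would first construct, for every $r\in\R^+$ such that $\lambda=-\rho+ir$ is a Ruelle resonance, a canonical isomorphism between $\tu{Ran}(\Pi_{-\rho+ir})$ and the Laplace eigenspace $\Eig(\Delta_{\mathbf M},\rho^2+r^2)$. The map sends an eigenfunction $\varphi$ to a distribution $u_\varphi^+\in\mc D'(\mathcal M)$ as follows: lift $\varphi$ to $G/K$, write it as a Poisson--Helgason transform of a boundary distribution $T_\varphi\in\mc D'(\partial(G/K))$, and pull $T_\varphi$ back to $\mathcal M\simeq\Gamma\backslash G/M$ along the stable foliation. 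A computation in horospherical coordinates should show that $u_\varphi^+\in\ker(-X-\lambda)$, that it is $N$-invariant, and that $\WF(u_\varphi^+)$ sits in the unstable cobundle. The analogous backward-time construction yields a co-resonant $u_\varphi^-$ with wavefront in the stable cobundle, so the product $u_\varphi^+u_\varphi^-$ is well defined by H\"ormander's criterion. The central identity to be established is
\[
\mathcal T_{-\rho+ir}(f)=\sum_{j}\bigl\langle u^+_{\varphi_j}\,u^-_{\varphi_j},\,f\bigr\rangle,\qquad f\in C^\infty(\mathcal M),
\]
for any orthonormal basis $(\varphi_j)$ of $\Eig(\Delta_{\mathbf M},\rho^2+r^2)$. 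Each $u^+_{\varphi_j}u^-_{\varphi_j}$ should then be recognised as the Patterson--Sullivan distribution $\mathrm{PS}_{\varphi_j}$, which is precisely the new description of those distributions promised in the abstract.

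\textbf{Step 2: quantum ergodicity and transfer.} Applying the Shnirelman--Zelditch--Colin de Verdi\`ere theorem to $\Delta_{\mathbf M}$ yields a density-one subsequence $(\varphi_{j_n})$ of Laplace eigenfunctions whose Wigner distributions tend weakly to $\intd\mu_{\tu{L}}$. Invoking the asymptotic equivalence of Patterson--Sullivan and Wigner distributions in the high frequency limit (Anantharaman--Zelditch for hyperbolic surfaces, with rank one extensions in the literature), the same subsequence satisfies $\mathrm{PS}_{\varphi_{j_n}}\rightharpoonup \intd\mu_{\tu{L}}$. Combined with the identity of Step 1 and the fact that the correspondence matches multiplicities, $\dim\tu{Ran}(\Pi_{-\rho+ir_n})=\dim\Eig(\Delta_{\mathbf M},\rho^2+r_n^2)$, this produces the desired density-one subsequence $(r_{k_n})$ of first-band Ruelle resonances. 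As a by-product, the correspondence rules out Jordan blocks on the first band.

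\textbf{Main obstacle.} The crux is Step 1: the identification of $\mathcal T_{-\rho+ir}$, defined through the flat trace and the fine microlocal structure of $\Pi_{-\rho+ir}$, with the Patterson--Sullivan bilinear form. One must track the wavefront sets of $u^\pm_\varphi$ carefully enough to justify the distributional product and then match the fibrewise integration along closed flow lines implicit in $\Tr^\flat$ with the horocyclic pairing that defines $\mathrm{PS}_\varphi$. For the rank one spaces of nonconstant curvature (complex, quaternionic, octonionic hyperbolic), where the horocyclic group $N$ is nonabelian and the fibre $M=Z_K(A)$ acts non-trivially, the representation-theoretic bookkeeping via the spherical principal series and the Poisson transform of the minimal parabolic is where the genuine work lies.
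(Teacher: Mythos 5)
Your proposal follows essentially the same route as the paper: quantum--classical correspondence via the Poisson transform, identification of the products $u^+_\varphi u^-_\varphi$ with Patterson--Sullivan distributions, and transfer of Shnirelman--Zelditch--Colin de Verdi\`ere quantum ergodicity through the asymptotic equivalence of Patterson--Sullivan and Wigner distributions. You have correctly isolated the main structural steps.

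Three points deserve correction or emphasis, however. First, your ``central identity'' $\mathcal T_{-\rho+ir}(f)=\sum_j\langle u^+_{\varphi_j}u^-_{\varphi_j},f\rangle$ is missing the $1/m$ normalization from the definition of $\mathcal T$ and, more importantly, presumes that the $u^\pm$ built from an $L^2$-orthonormal basis of eigenfunctions automatically form a biorthogonal system for the spectral projector $\Pi_{-\rho+ir}=\sum_l u_l\otimes u_l^*$. That biorthogonality is \emph{not} automatic: it is precisely the content of the pairing formula (the paper's Theorem on $\int_{\mathbf M}\pi_{0*}(v)\pi_{0*}(v^*)=c(\rho+\lambda)\int_{S\mathbf M}vv^*\,\intd\mu_{\tu L}$), whose proof requires a genuinely analytic regularization argument (a cutoff near the antidiagonal, an Euler-type vector field, asymptotic expansions in the cutoff parameter) --- not merely ``representation-theoretic bookkeeping.'' Second, the claim that the correspondence ``rules out Jordan blocks on the first band'' as a by-product is inaccurate: ruling out Jordan blocks is a separate theorem that uses derivatives of the Poisson transform in the spectral parameter, and in fact first-band Jordan blocks of size exactly two \emph{do} occur at $\mu=0$ (i.e.\ $\lambda=-\rho$). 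That case is harmlessly excluded here only because $r_n>0$, but one must prove the absence of blocks for $\mu\ne0$ in order to write $\Pi$ as a rank-$m$ sum of tensor products. Third, your dimension-matching statement $\dim\tu{Ran}(\Pi_{-\rho+ir_n})=\dim\Eig(\Delta_{\mathbf M},\rho^2+r_n^2)$ tacitly uses both the no-Jordan-block theorem and the (cited) fact that \emph{every} resonant state on $\Re\lambda=-\rho$, $\Im\lambda\ne0$ is first band; without both, $\tu{Ran}(\Pi)$ could be strictly larger than $\tu{Res}^0_X$.
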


To prove this result, we have to show an explicit  correspondence between the Ruelle resonant states on ${\rm Re}(\la)=-\rho$ and the eigenstates of the Laplacian $\Delta_{\bf M}$. This allows us to reduce the problem to a quantum ergodicity result for the Laplacian and use the Shnirelman-Zelditch-Colin de Verdi\`ere theorem \cite{Shn74, Zel87, CdV85}. In fact, we prove several results in this article which are of independent interest.

{\bf The quantum-classical correspondence (Theorem~\ref{thm:intertwining} and Theorem~\ref{thm:jordan}).} In \cite{DFG15, GHW18, Had18} it is shown that for geodesic flows on compact (resp. convex cocompact) constant negative curvature manifolds the Ruelle resonances are related to eigenvalues (resp. quantum resonances) for the Laplacian.
 A central ingredient in the proof is to establish an explicit bijection between the Ruelle resonant states in $\mathcal D'(S\mathbf M)$  that are killed by unstable derivatives and the eigenstates  of the Laplacian  $\Delta_{\bf M}$ on $\mathbf M$, at least for Ruelle resonances that are not in a certain exceptional set. The map from Ruelle resonant states to eigenfunctions of $\Delta_{\bf M}$ is given by the pushforward ${\pi_0}_*:\mc{D}'(S\mathbf M)\to \mc{D}'(\mathbf M)$, where $\pi_0:S{\bf M}\to {\bf M}$ is the projection onto the base. We extend this bijection to the setting of all compact locally symmetric  spaces of rank one in Theorem~\ref{thm:intertwining} and Theorem~\ref{thm:jordan}. To prove the bijection, one uses as in \cite{DFG15} a correspondence between  Ruelle (generalized) resonant states killed by the unstable derivates and the distributions on the boundary of $G/K$ that have a certain conformal equivariance by the group $\Gamma$, and the bijection between these distributions and $\Gamma$-invariant eigenfunctions of Laplacian on $G/K$, which thus descend to $\mathbf{M}$. The quantum-classical correspondence that we establish in this article form a crucial basis for the generalizations to vector bundles \cite{KW19a, KW20}. 
 
 We note that the correspondence between the distributions on the boundary and the Laplace eigenfunctions follows from works of \cite{He74, OS80, vBS87, Ot98, GO05} on the Poisson transformation. Some results related to the quantum-classical correspondence were obtained previously in \cite{FF03,Co05} for $G={\rm SL}_2(\rr)$ where the authors study distributions on $S\mathbf{M}$ invariant by the horocycle flow. In particular in \cite{Co05}, a relation is made between such distributions and conformally equivariant distributions on the boundary $\pl(G/K)$, similarly to what we discussed above.

{\bf A new description of Patterson-Sullivan  distributions (Theorem~\ref{thm:patterson_sullivan}).} In \cite{AZ07}  Anantharaman and Zelditch introduced Patterson-Sullivan distributions on compact hyperbolic surfaces. Given an eigenfunction of the Laplacian in $C^ \infty(\mathbf M)$, these distributions are distributions in  $\mathcal D'(S\mathbf M)$ which are invariant under the geodesic flow and become equivalent to usual semiclassical lifts such as Wigner distributions in the semiclassical limit. In \cite{HHS12} this construction has been generalized to compact higher rank locally symmetric spaces. Using the quantum-classical  correspondence we give in Theorem~\ref{thm:patterson_sullivan} a new description of these Patterson-Sullivan distributions for rank one spaces: 
given a Laplace eigenfunction $\varphi\in C^ \infty(\mathbf M)$ the quantum-classical correspondence allows us to associate a unique Ruelle resonant state $v\in \mathcal D'(S\mathbf M)$ as well as a Ruelle co-resonant state $v^* \in\mathcal D'(S\mathbf M)$. The Patterson-Sullivan distribution is then precisely given by the distributional product of $v\cdot v^*$ which is  well defined by a wavefront condition. 
 
{\bf A pairing formula (Theorem \ref{thm:pairing_formula}).} If the Ruelle resonance $\la_0$ associated to a 
 Patterson-Sullivan distribution is simple, then it is easy to check that the Patterson-Sullivan  distribution coincides with the invariant Ruelle density. If ${\rm Rank}(\Pi_{\la_0})>1$,
 one additionally needs a pairing formula  in order
 to express the invariant Ruelle density in terms of the Patterson-Sullivan distributions. 
The pairing formula relates pairings of resonant states with co-resonant states to pairings of the associated eigenfunctions of $\Delta_{\bf M}$.
Such pairing formulas have previously been proved in \cite{AZ07} for hyperbolic surfaces and in \cite{DFG15} for compact constant negative curvature manifolds using different methods. We extend them to all rank one cases in Theorem \ref{thm:pairing_formula}.
We follow the strategy of \cite{DFG15} but we emphasize that new difficulties appear due to the anisotropy of the Lyapunov exponents given by the fact that the curvature is not constant anymore.

We would like to end the introduction with the following remark: in this article 
we use the precise correspondence between Ruelle and Laplace resonant states in order to prove the first version of quantum ergodicity for Ruelle resonant states. If however it becomes 
possible to prove stronger properties like quantum unique ergodicity for the high 
frequency limits of Ruelle resonant states, then the quantum-classical correspondence 
would allow to transfer these results to the semiclassical limits of Laplace eigenfunctions. 

\emph{Acknowledgements:} This project has received funding from the European Research Council (ERC) under the European Union’s Horizon 2020 research and innovation programme (grant agreement No. 725967). T.W. is partially supported by Deutsche Forschungsgemeinschaft (DFG) through the Emmy Noether group ``Microlocal Methods for Hyperbolic Dynamics''(Grant No. WE 6173/1-1) . We thank Benjamin K\"uster and Lasse Wolf for helpful comments concerning the manuscript.

\section{Ruelle resonances}\label{sec:PRR}
In this section we introduce the spectral theory of Ruelle resonances for Anosov flows as well as the notion of their resonant states and invariant distributions.

Within this section let $\mathcal M$ be a smooth, compact manifold without boundary, $X\in C^\infty(\mathcal M,T\mathcal M)$  a smooth vector field that generates an Anosov flow $\varphi_t$.

We want to introduce Ruelle resonances as the discrete spectrum of the differential operator $X$ on suitable Hilbert spaces, as they have been introduced by Liverani \cite{Liv04}, Butterley-Liverani \cite{BL07} and Faure-Sj\"ostrand \cite{FSj11} and Dyatlov-Zworski \cite{DZ16}. We will use the microlocal approach from \cite{FSj11, DZ16}. Generalizations to flows on noncompact manifolds can be found in \cite{DG16, BW17}.

\begin{prop}
\label{thm:fredholm_property}
 There exists a family of Hilbert spaces denoted by $\mc{H}^N$ and parametrized by $N>0$. Each $\mc{H}^N$ is an anisotropic Sobolev space that fulfills the relations
\[
H^N(\mc{M})\subset \mc{H}^N(\mathcal M)\subset H^{-N}(\mathcal M),\]
where $H^N(\mathcal M)$ denotes the ordinary $L^2$-based Sobolev space of order $N$. Consider furthermore the operator $X$ acting on $\mathcal D'(\mathcal M)$ and define
\[
{\rm Dom}_{N}(X):=\{u\in \mc{H}^N\mid  X u\in \mc{H}^N\},
\]
which is a dense subset in $\mc{H}^N$. Then the operator $X: \mc{H}^N\to \mc{H}^N$ is an unbounded closed operator defined on a dense domain satisfying:
\begin{enumerate}
 \item There is $C_0>0$ such that for any $N>0$ the operator $(X+\lambda):{\rm Dom}_{N}(X)\to \mc{H}^N$ is Fredholm of index $0$ depending analytically on $\lambda$ in the region $\{\tu{Re}(\lambda) >-N/C_0\}$.
\item There is a constant $C_1>0$ such that for $\Re\lambda>C_1$ the operator $(X+\lambda):{\rm Dom}_{N}(X)\to \mc{H}^N$ is invertible for all $N>0$.
\end{enumerate}
\end{prop}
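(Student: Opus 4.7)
The plan is to follow the microlocal construction of Faure--Sj\"ostrand and Dyatlov--Zworski. The proof splits into three ingredients: (i) construction of an escape function adapted to the Anosov splitting, (ii) definition of the anisotropic spaces $\mathcal H^N$ via a pseudodifferential weight and verification of the Sobolev inclusions, and (iii) microlocal estimates for $X+\lambda$ on $\mathcal H^N$ yielding Fredholmness, plus a soft argument for invertibility at large $\Re\lambda$.

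First I would use the Anosov hypothesis to fix the dual decomposition $T^*\mathcal M\setminus 0 = E_0^*\oplus E_s^*\oplus E_u^*$, where $E_s^*$ (resp.\ $E_u^*$) is the annihilator of $E_0\oplus E_s$ (resp.\ $E_0\oplus E_u$). Under the Hamiltonian flow of $p:=\sigma(X/i)$, covectors in $E_u^*$ are exponentially expanded and those in $E_s^*$ exponentially contracted; $E_0^*$ is a radial source/sink in the appropriate sense. I would construct a function $m\in C^\infty(T^*\mathcal M\setminus 0)$ which is homogeneous of degree $0$, takes values in $[-1,1]$, is equal to $+1$ in a small conic neighborhood of $E_s^*$ and to $-1$ in a small conic neighborhood of $E_u^*$, and satisfies $H_p m\le 0$ everywhere with strict inequality outside an arbitrarily small conic neighborhood of $E_0^*\cup\{0\}$. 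This is the standard escape function and its construction is the main technical input, relying solely on the hyperbolicity estimates along the flow.

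Next I would define the weight $G_N:=\Op(N m(x,\xi)\log\langle\xi\rangle)$, a pseudodifferential operator of variable order, and set
\[
\mathcal H^N := \bigl\{u\in\mathcal D'(\mathcal M)\;:\; e^{G_N}u\in L^2(\mathcal M)\bigr\},
\]
with the Hilbert norm $\|u\|_{\mathcal H^N}:=\|e^{G_N}u\|_{L^2}$. Since $|m|\le 1$, on the symbol level the weight lies between $\langle\xi\rangle^{-N}$ and $\langle\xi\rangle^{N}$, which gives the continuous inclusions $H^N\subset \mathcal H^N\subset H^{-N}$. The conjugated operator $P_N(\lambda):=e^{G_N}(X+\lambda)e^{-G_N}$ is a pseudodifferential operator of order $1$ (to leading order) acting on $L^2$, with principal symbol $ip+\lambda$ and subprincipal symbol whose real part equals $\Re(\lambda)+N(H_p m)\log\langle\xi\rangle+\mathcal O(1)$. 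By the escape inequality for $m$, outside a conic neighborhood of $E_0^*$ this real part is $\le \Re(\lambda) - c N\log\langle\xi\rangle + \mathcal O(1)$ for some $c>0$, which is strictly negative on large frequencies as soon as $\Re(\lambda)>-N/C_0$ for $C_0:=c^{-1}$.

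With this in hand, the Fredholm statement (1) follows from a combination of elliptic estimates away from the characteristic variety $\{p=0\}$, propagation of singularities along the Hamilton flow, and radial source/sink estimates at $E_u^*$ (source) and $E_s^*$ (sink), exactly as in \cite{FSj11,DZ16}; these yield a semiclassical Fredholm estimate of the form
\[
\|u\|_{\mathcal H^N}\le C\|(X+\lambda)u\|_{\mathcal H^N}+C\|u\|_{H^{-N'}}
\]
with the error term compact into $\mathcal H^N$, proving that $X+\lambda$ is Fredholm of index $0$ in $\{\Re\lambda>-N/C_0\}$, and analytic dependence on $\lambda$ is immediate from the construction. For (2), I would observe that $X$ is antisymmetric on $L^2$ with respect to a smooth volume form (or more generally generates a strongly continuous group whose resolvent is controlled on $H^N$ for $\Re\lambda$ large by a standard semigroup/commutator argument bounding $\|X^k(X+\lambda)^{-1}\|$), so $(X+\lambda)^{-1}$ exists on $H^N$ for $\Re\lambda>C_1$; composing with the continuous inclusions $H^N\subset \mathcal H^N\subset H^{-N}$ and using injectivity of $X+\lambda$ on $\mathcal D'(\mathcal M)$ for such $\lambda$ (by analytic continuation of $R(\lambda)$ already defined on $L^2$) shows the Fredholm operator on $\mathcal H^N$ is invertible.

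I expect the main obstacle to be the construction of the escape function $m$ with the required global monotonicity under $H_p$ despite the coexistence of source, sink, and saddle dynamics at infinity; everything downstream is standard microlocal bookkeeping once $m$ is in place.
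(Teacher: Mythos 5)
Your proposal follows exactly the microlocal approach of Faure--Sj\"ostrand and Dyatlov--Zworski, which is precisely what the paper does: the paper's own proof consists of a citation to \cite[Theorem~1.4, Lemma~3.3]{FSj11} and \cite[Propositions~3.1, 3.2]{DZ16}, and your sketch is a correct reconstruction of the argument in those references. So this is the same approach, just spelled out.

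Two small inaccuracies are worth flagging, although neither undermines the argument. First, with your convention $m=-1$ near $E_u^*$ (the radial source for the cotangent lift $e^{tH_p}$) and $m=+1$ near $E_s^*$ (the sink), one has $H_p m\ge 0$ along the flow, not $\le 0$; the conjugation $e^{G_N}(X+\lambda)e^{-G_N}$ then produces a correction to the real part of $-N(H_p m)\log\langle\xi\rangle$ rather than $+N(H_pm)\log\langle\xi\rangle$. These two sign slips cancel, and the resulting estimate $\Re\sigma(e^{G_N}(X+\lambda)e^{-G_N})\le\Re\lambda-cN\log\langle\xi\rangle+\mathcal O(1)$ away from $E_0^*$ is correct. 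Second, for item~(2) a general Anosov vector field on a compact manifold need not preserve any smooth volume form (that hypothesis is only introduced later in the paper for geodesic flows), so the ``$X$ is antisymmetric on $L^2$'' route is not available at this level of generality; the correct argument, as in \cite{FSj11,DZ16}, is the one you offer as an alternative: the same escape-function estimates give $\|e^{-tX}\|_{\mathcal H^N\to\mathcal H^N}\le Ce^{C_1 t}$ for $t\ge 0$, hence $(X+\lambda)^{-1}=\int_0^\infty e^{-t\lambda}e^{-tX}\,dt$ converges on $\mathcal H^N$ for $\Re\lambda>C_1$. With those two corrections your outline matches the cited proofs.
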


\begin{proof}
For the Fredholm property see \cite[Theorem 1.4]{FSj11} (\cite[Proposition 3.2]{DZ16}, respectively) and for the invertibility \cite[Lemma 3.3]{FSj11} (\cite[Proposition 3.1]{DZ16},  respectively).
\end{proof}

Consequently, the operator $-X$ has discrete spectrum
on $\{\Re(\lambda)>-N/C_0\}$ of finite algebraic multiplicity. We recall that the algebraic multiplicity of an eigenvalue $\la_0$ of $-X$ is 
$\dim \{u\in \mc{H}^N\mid \exists j\geq 1, (X+\la_0)^ju=0\}$, while the geometric multiplicity of $\la_0$ is 
$\dim \ker_{\mc{H}^N}(X+\la_0)$. 
We call $\lambda \in \C$ a \emph{Ruelle resonance} if
\[
\tu{Res}_{X}(\lambda) := \ker_{\mc{H}^N}(X+\lambda) \neq 0
\]
for some $N> -C_0\Re(\lambda)$.

It can also be shown \cite[Theorem 1.5]{FSj11} that for each $j\geq 0$, $\ker_{\mc{H}^N}(X+\lambda)^j\subset \mathcal D'(\mathcal M)$ is independent of the choice of $N> -C_0\cdot \Re(\lambda)$, so $\tu{Res}_{X}(\lambda)$ is well defined and we call it the space of \emph{Ruelle resonant states} associated to the resonance $\lambda$. In general the geometric and algebraic multiplicity of a Ruelle resonance $\lambda$ need not be equal. Thus we define $J(\lambda)$ to be the smallest integer such that
\[
 \ker_{\mc{H}^N}(X+\lambda)^k = \ker_{\mc{H}^N}(X+\lambda)^{J(\lambda)}
\]
for all $k\geq J(\lambda)$. We call $\ker_{\mc{H}^N}(X+\lambda)^{J(\lambda)}$
the space of \emph{generalized Ruelle resonant states}. Spectral theory also provides us with a finite rank spectral projector
\[
\Pi_{\lambda_0} : \mc{H}^N \to \mc{H}^N
\]
satisfying $\Pi_{\lambda_0}^2= \Pi_{\lambda_0}$ and $\tu{Ran}(\Pi_{\lambda_0})=\ker_{\mc{H}^N}(X+\lambda_0)^{J(\lambda_0)}$, which commutes with the Anosov flow, i.e. $[ X,\Pi_{\lambda_0}] = 0$. Note that this spectral projector coincides with the residue of the meromorphically continued resolvent as defined in the introduction (see \cite[Section 4]{DZ16}).
For $f\in C^\infty(\mathcal M)$, the multiplication operator by $f$ is continuous on $\mc{H}^N$ and the spectral projector allows to define a distribution
\begin{equation}\label{eq:inv_ruelle_dist}
 \mathcal T_{\lambda_0}:\Abb{C^\infty (\mc{M}) }{\C}{f}{\frac{1}{m_{\lambda_0}}\Tr_{\mc{H}^N}(f\Pi_{\lambda_0})}
\end{equation}
where $m_{\lambda_0} := \tu{dim}(\ker_{\mc{H}^N}(X+\lambda_0)^{J(\lambda_0)})$ is called multiplicity of the resonance $\la_0$. From the invariance of $\Pi_{\lambda_0}$ under the Anosov flow it directly follows that $\mathcal T_{\lambda_0}$ is flow-invariant as well. Note that from the microlocal description
of $\Pi_{\lambda_0}$ in \cite{DZ16} it follows that
$\mathcal T_{\lambda_0} \in \mc{D}'(\mathcal M)$ does not depend on the choice of $\mc{H}^N$ and is an intrinsic invariant distribution associated to each Ruelle resonance which we will call \emph{invariant Ruelle distribution}. Note that if the space of generalized resonant states for $\lambda_0=0$ is one-dimensional, then the invariant Ruelle distribution  corresponds to the 
unique SRB-measure.

We will not need any detailed knowledge on the construction of the Hilbert space structure of $\mc{H}^N$. However, we will use the microlocal description of the resonant states using the wavefront set. Therefore, let 
\[T_m \mathcal M = E_0(m)\oplus E_s(m)\oplus E_u(m)\] 
be the Anosov splitting of the tangent bundle into neutral, stable and unstable bundles. We can introduce the following dual splitting of the cotangent space 
\[T^*_m\mathcal M = E_0^*(m)\oplus E_s^*(m)\oplus E_u^*(m),\]
which is defined by
\[E^*_0(m)(E_s(m)\oplus E_u(m))=0,\,\, 
E^*_u(m)(E_0(m)\oplus E_u(m))=0,\,\, E^*_s(m)(E_0(m)\oplus E_s(m))=0.\]

\begin{lem}[{\cite[Lemma 5.1]{DFG15}}]
\label{lem:microlocal_res_state}
 The space of Ruelle resonant states for a resonance $\lambda_0$  is given by
 \begin{equation}\label{eq:microlocal_res_state}
  \tu{Res}_{X}(\lambda_0) = \{u\in \mathcal D'(\mathcal M)\mid(X+\lambda_0)u=0 
  \tu{ and }\WF(u)\subset E^*_u \},
 \end{equation}
where ${\rm WF}(u)\subset T^*\mc{M}$ denotes the wavefront set of the distribution $u$.  The generalized resonant states can be characterized similarly: 
 \begin{equation}\label{eq:microlocal_gen_res_state}
  \ker(X+\lambda_0)^{J(\lambda_0)} = \{u\in \mathcal D'(\mathcal M)\mid(X+\lambda_0)^{J(\lambda_0)}u=0 \tu{ and }\WF(u)\subset E^*_u \}.
 \end{equation}
\end{lem}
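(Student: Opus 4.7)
The statement has two directions, and both rest on the microlocal construction of the anisotropic Sobolev spaces $\mc{H}^N$ from \cite{FSj11, DZ16}. The underlying fact to extract is that $\mc{H}^N$ is constructed as $\Op(e^{-NG})L^2$ for some escape function $G \in S^0_{1,0}$ which is positive in a conic neighborhood of $E_s^*$ and $E_0^*$ and negative in a conic neighborhood of $E_u^*$. In other words, membership in $\mc{H}^N$ imposes regularity $+N$ in microlocal directions outside a conic neighborhood of $E_u^*$, and allows regularity as bad as $-N$ in a conic neighborhood of $E_u^*$.

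For the forward inclusion in \eqref{eq:microlocal_res_state}, suppose $u \in \Res_X(\la_0)$. Then $(X+\la_0)u=0$ by definition. The essential point, established in \cite[Theorem 1.5]{FSj11}, is that the space $\ker_{\mc{H}^N}(X+\la_0)^j$ is independent of $N$ for all $N>-C_0\Re(\la_0)$. Thus $u$ lies in $\mc{H}^N$ for every arbitrarily large $N$, so by the escape function interpretation above, $u$ is microlocally $H^N$ in every direction outside an arbitrarily small conic neighborhood of $E_u^*$. This forces $\WF(u)\subset E_u^*$. The same argument applied to $\ker_{\mc{H}^N}(X+\la_0)^{J(\la_0)}$ gives the forward inclusion of \eqref{eq:microlocal_gen_res_state}.

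For the backward inclusion, suppose $u\in \mc{D}'(\mc{M})$ satisfies $(X+\la_0)u=0$ and $\WF(u)\subset E_u^*$. We must show $u\in \mc{H}^N$ for some $N>-C_0\Re(\la_0)$; combined with $(X+\la_0)u=0$, this will place $u\in \Res_X(\la_0)$. By the wavefront condition, $u$ is microlocally smooth outside any fixed conic neighborhood of $E_u^*$; in a neighborhood of $E_u^*$ the distribution $u$ has some finite Sobolev order (say $u\in H^{-M}$). Picking $N\geq M$ and using that the anisotropic weight $e^{-NG}$ only requires negative regularity near $E_u^*$ but positive regularity elsewhere — which $u$ automatically possesses — we conclude $u\in \mc{H}^N$. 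The generalized case is identical with $(X+\la_0)$ replaced by $(X+\la_0)^{J(\la_0)}$, noting that the wavefront condition is preserved under any differential operator.

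The only genuine subtlety is to verify that the microlocal characterization of $\mc{H}^N$ sketched above really matches the construction used in \cite{FSj11, DZ16}; this is where one invokes the results in those papers rather than reproving them. Once that is accepted, both inclusions are essentially bookkeeping with wavefront sets and the $N$-independence of $\ker_{\mc{H}^N}(X+\la_0)^j$.
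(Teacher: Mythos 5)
The paper does not supply its own argument here: the lemma is imported with a bare citation to \cite[Lemma~5.1]{DFG15}. The proof there runs through the explicit wavefront set bound $\WF'(\Pi_{\lambda_0}) \subset E_u^* \times E_s^*$ for the Schwartz kernel of the spectral projector (the fact quoted in this paper's introduction via \cite[Theorem~1.7]{FSj11} and \cite[Proposition~3.3]{DZ16}); the forward inclusion then follows because a (generalized) resonant state can be written as $\Pi_{\lambda_0}\phi$ with $\phi\in C^\infty$ (density plus finite rank), so $\WF(u)$ is controlled by the left projection of $\WF'(\Pi_{\lambda_0})$, hence lands in $E_u^*$. Your proof takes a genuinely different route: you never touch the projector and argue purely from the anisotropic-weight description of $\mc{H}^N$ together with the $N$-independence of the kernel. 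Both are legitimate; yours avoids invoking the resolvent wavefront set calculus at the cost of leaning harder on the specifics of the escape-function construction.

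There is one place where your forward inclusion needs sharpening. As you invoke it, the $N$-independence of \cite[Theorem~1.5]{FSj11} refers to scaling a fixed order function $m_0$ by $N$. For a fixed shape $m_0$, membership in $\mc{H}^N$ for all large $N$ only gives microlocal smoothness on the open cone $\{m_0>0\}$, so you get $\WF(u)\subset\{m_0\le 0\}$, a conic set strictly larger than $E_u^*$; the $N$-independence statement by itself does not let you shrink this set. To close the gap you must either (a) also vary the \emph{shape} of the escape function and invoke the independence of the eigenspaces from that choice (which holds because the resolvent $R(\lambda)\colon C^\infty\to\mc{D}'$ is intrinsic, but that is an additional input), or (b) combine $u\in\mc{H}^N$ with the equation $(X+\lambda_0)u=0$: elliptic regularity gives $\WF(u)\subset\mathrm{Char}(X)=E_s^*\oplus E_u^*$, the wavefront set is invariant under the lifted Hamilton flow, and your $\mc{H}^N$-membership removes a whole conic neighborhood of $E_s^*$, so $\WF(u)$ is a closed flow-invariant subset of $E_s^*\oplus E_u^*$ disjoint from a neighborhood of the repeller $E_s^*$ and therefore contained in the attractor $E_u^*$. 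The backward inclusion as you write it is fine (note $Xu=-\lambda_0 u\in\mc{H}^N$ verifies the domain condition automatically). Once the forward step is repaired along either line, the argument is complete.
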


By duality one can define the \emph{co-resonant states} which we will denote by $\tu{Res}_{X^*}(\lambda_0)$ and if the Anosov flow preserves a smooth volume density\footnote{For the geodesic flows considered in the sequel, there is a canonical smooth preserved measure, the so called Liouville measure.}, they can microlocally be described as
\begin{equation}\label{eq:microlocal_cores_state}
  \tu{Res}_{X^*}(\lambda_0) = \{u\in \mathcal D'(\mathcal M)\mid (X- \lambda_0)u=0 \tu{ and }\WF(u)\subset E^*_s \}.
\end{equation}
Note that given a Ruelle resonance $\lambda_0$, since $E_s^*\cap E_u^*=0$, the resonant and co-resonant states satisfy conditions on their wavefront sets ensuring that their product is well-defined in $\mc{D}'(\mc{M})$ by \cite[Theorem 8.2.10]{HoeI}. For $u\in \tu{Res}_{X}(\lambda_0), v\in \tu{Res}_{X^*}(\lambda_0)$, the product is a flow-invariant distribution:
\[
X(u\cdot v) = (X u)\cdot v + u\cdot X v = (\lambda_0-\lambda_0)(u\cdot v)=0.
\]
We will see in Section~\ref{PatSul} that the Patterson-Sullivan distributions on compact locally symmetric spaces of rank one can be interpreted as such a product of resonant states. It turns out that in the case of a nondegenerate Ruelle resonance without Jordan block, (i.e. for $\dim(\tu{Res}_{X}(\lambda_0)) = 1$ and $J(\lambda_0)=1$) also the invariant Ruelle distribution $\mathcal T_{\lambda_0}$ can be expressed in this way.

\section{Ruelle resonances on rank one locally symmetric spaces}
In this section we want to relate the Ruelle resonant states of the so called ``first band'' on rank one locally symmetric spaces to certain distributional vectors in principal series representations (Proposition~\ref{prop:ruelle_gamma_inv_distr}). 

\subsection{Riemannian symmetric spaces}\label{sec:rieman_sym_space}

We first recall some standard notations for Riemannian symmetric spaces. Let $G$ be a noncompact, connected, real, semisimple Lie group of real rank $1$ with finite center and $K\subset G$ a maximal compact subgroup. We will write $G=KAN$ for an Iwasawa decomposition and let $M$ be the centralizer of $A$ in $K$ in what follows.
The Killing form $\mc{K}:\mathfrak{g}\x \mathfrak{g}\to \rr$ is a non-degenerate bilinear form and the Cartan involution $\theta:\mathfrak{g}\to \mathfrak{g}$ on the Lie algebra $\mathfrak{g}$ of $G$ allows to define a natural positive-definite scalar product 
$\cjg \cdot,\cdot\cjd_{\mathfrak{g}}=-\mc{K}(\cdot,\theta\cdot)$ on $\mathfrak{g}$ (and thus on $\mathfrak g^*$ as well). Moreover, as $G/K$ is of rank $1$, i.e. $\dim_\R(A)=1$, we have an isomorphism $\mathfrak a_\C^*\to \cc$ by identifying $\lambda$ with $\lambda(H_0)$, after choosing a suitable element $H_0\in\mathfrak a$: we choose $H_0$ to be the uniquely determined element of $\mathfrak a$ which satisfies $\alpha_0(H_0)=\sqrt{\cjg \alpha_0,\alpha_0\cjd_{\mathfrak{g}}}=:||\alpha_0||$, where $\alpha_0\in\mathfrak a^*$ is the unique simple positive restricted root. We shall denote by $\rho\in\mathfrak a^*$ the half-sum of the positive restricted roots weighted by multiplicity, and let $m_{\alpha_0} := \dim \g_{\pm\alpha_0}$ and $m_{2\alpha_0}:=\dim\g_{\pm2\alpha_0}$ be the multiplicities of the possible restricted roots. In particular, one gets 
$\rho=\demi ||\alpha_0||m_{\alpha_0}+||\alpha_0||m_{2\alpha_0}$ when identifying $\mathfrak a_\cc^*\simeq \cc$.

Under the above assumptions $G/K$ is a Riemannian symmetric space of rank $1$. More precisely, $G/K$ is a hyperbolic space ${\bf H}_{\mathbb{K}}$ where $\mathbb{K}$ is either $\rr,\cc,\hh,\mathbb{O}$ ($\hh$ denotes the quaternions and $\mathbb{O}$ the octonions) and we will denote by $n$ its real dimension\footnote{Note that this choice implies, that e.g. for complex hyperbolic spaces $n$ has always to be chosen even. For $\mathbb{K}=\mathbb{O}$ only $n=16$ is possible (Cayley plane).}. To simplify notation, we will write ${\bf H}=G/K$. The Killing form induces a canonical Riemannian metric on ${\bf H}$ and with this metric the space $\mathbf H$ has negative  sectional curvature (equal to $-1$ if $\mathbb{K}=\rr$ and in $[-4,-1]$ in the other cases). Furthermore, it induces a smooth left $G$-invariant measure, which we denote by $\intd x$. The unit sphere bundle $S{\bf H}$ can be identified with $G/M$ and we denote the left $G$-invariant Liouville measure by $\intd\mu_{\tu{L}}$. Using a trivialization $S{\bf H} \cong {\bf H}\times \mathbb S^{n-1}$ the Liouville measure can be written as $\intd \mu_{\tu{L}} = \intd x\otimes\intd \mu_{\mathbb S^{n-1}}$ where $\intd \mu_{\mathbb S^{n-1}}$ is the standard Lebesgue measure on the unit sphere. Note that the measures $\intd x$ and $\intd \mu_{\tu{L}}$ are intrinsically defined by the Riemannian geometry of ${\bf H}$. We normalize the bi-invariant Haar measures on the Lie groups $G,M,K,A$ and $N$ in a consistent way as follows: we start by fixing $\intd m$ by the condition $\tu{vol}(M)=1$ and in addition set 
$\tu{vol}(K) = \tu{vol}(\mathbb S^ {n-1})$. The adjoint action of $K$ on $\p$ gives an identification $K/M
\cong \mathbb S^{n-1}$ and our choice implies that, under this identification, $\intd \mu_{\mathbb S^ {n-1}} = \intd (kM)$. Next we fix $\intd g$ such that $\intd (gK) = \intd x$. With these choices one obtains the identification $\intd (gM) = \intd \mu_{\tu{L}}$ in the following way: the $G=NAK$ decomposition gives a trivialization $G/M\cong G/K\times K/M$ and using the normalizations from above one checks that $\intd(gM) = \intd (gK)\otimes \intd (kM) = \intd x\otimes\intd \mu_{\mathbb S^{n-1}} = \intd \mu_{\tu{L}}$. Finally it remains to normalize $\intd a$: recall that we identify $\a\cong \R$ or respectively $\a^*_\C\cong \C$ by the choice of an element $H_0$ that is normalized w.r.t. the Killing form. This imposes an analogous normalization of the measure $\intd a$.

Let $\Gamma\subset G$ be a torsion-free discrete co-compact subgroup, then $\textbf{M}:=\Gamma\backslash G/K$ is a smooth compact Riemannian locally symmetric space of rank $1$. We denote the respective positive Laplacians by $\Delta_{\mathbf{H}}$ and $\Delta_{\textbf{M}}$. Again we have a Lebesgue measure defined by the Riemannian metric as well as the Liouville measure and by slight abuse of notation we also denote them by $\intd x$ and $\intd \mu_{\tu{L}}$. 
The unit tangent bundle $\mc{M}:=S{\bf M}$ of ${\bf M}=\Gamma\backslash {\bf H}$ can be identified with the quotient $\Gamma\backslash G/M$. Under this identification the geodesic flow is simply the  right action of $A$: i.e 
\[A \x \Gamma\backslash G/M\to \Gamma\backslash G/M, \quad (a, \Gamma gM)\mapsto \Gamma gaM.\] 
It is known to be an Anosov flow (see e.g. \cite{Hi05}), thus all the definitions
of Ruelle resonances and resonant states from Section~\ref{sec:PRR} apply. Moreover, the Anosov splitting of $T\mc{M}$  into neutral, stable, and unstable directions can be expressed explicitly as associated vector bundles 
$T\mc{M}=\Gamma\backslash G\times_M(\mathfrak a\oplus\mathfrak n_+\oplus  \mathfrak n_-)$. 
Here  $\mathfrak n_+$ is the Lie algebra of $N$ and $\mathfrak n_-=\theta\mathfrak n_+$ is the Lie algebra of 
$\overline{N}:=\theta N$, where $\theta$ denotes the Cartan involution on $G$ as well as its derivative on the Lie algebra $\mathfrak g$ of $G$. The bundles $E_u$ and $E_s$ are identified with
\[E_u=\Gamma\backslash G\times_M \mathfrak n_-, \quad E_s=\Gamma\backslash G\times_M \mathfrak n_+.\] 
Similarly, the geodesic flow on the cover $S{\bf H}=G/M$ also has an Anosov splitting with smooth stable and unstable bundles  
\[ \til{E}_u:=G\times_M \mathfrak n_-, \quad \til{E}_s:=G\times_M \mathfrak n_+.\] 

\subsection{The first band of classical Ruelle resonances}\label{sec:firstband}
In \cite{DFG15} it is shown for compact real hyperbolic manifolds that the spectrum of Ruelle resonances forms an exact band structure. A particularly important subset
of resonances are those that are invariant by the horocyclic flows (i.e. killed by the unstable derivatives).

\begin{Def}\label{def:firstBand}
A Ruelle resonant state $u$ is said to belong to the 
\emph{first band}, if for each smooth section $U_-$ of $E_u$ 
we have $U_- u =0$ and we write ${\rm Res}_X^0(\lambda_0)$ 
for the first band Ruelle resonant states at the resonance 
$\lambda_0\in \C$.  

Similarly, we say a co-resonant state $u$ belongs to the 
\emph{first band} if for each section $U_+$ of $E_s$ we 
have $U_+u =0$. We write $ \tu{Res}^0_{X^*}(\lambda_0)$ 
for the first band Ruelle co-resonant states at the 
resonance $\lambda_0\in \C$. 
\end{Def} 
\begin{rem}
 The notion \emph{first band} is justified by the following result
 of an exact band structure (see \cite{DFG15} for constant negative 
 curvature manifolds and \cite{KW19a} for compact locally symmetric spaces
 of rank one): If $\lambda_0\in \C$ is a Ruelle resonance with 
 $\tu{Im}(\lambda_0)\neq 0$, then 
 $\tu{Re}(\lambda_0) \in -\rho - \N_0\alpha_0$, i.e. the resonances 
 with nonvanishing imaginary parts are arranged on vertical lines parallel 
 to the imaginary axis. Furthermore, if $\lambda= -\rho + ir$ for $r\neq 0$
 i.e. if $\lambda$ lies on the first line, then it is shown in \cite{DFG15,KW19a}, that the resonant states are first band resonances in the sense of Definition~\ref{def:firstBand}.
\end{rem}

We can lift the resonant states to the cover $G/M$ by the quotient map $\pi_{\Gamma}\colon G/M\to \Gamma\backslash G/M$. By a slight abuse of notation, $X$ will also denote the infinitesimal generator of the geodesic flow on $G/M$ which descends to the infinitesimal generator of the geodesic flow on $\Gamma\backslash G/M$ via $\pi_{\Gamma}$. The geodesic flow (the flow of $X$) on $\mc{M}=\Gamma\backslash G/M$ and on $S{\bf H}=G/M$ will be denoted by $\varphi_t$. The splitting 
$G\times_M( \mathfrak a +\mathfrak n_+ +\mathfrak n_-)$ of the tangent bundle of $G/M$ is $G$-invariant and descends to the Anosov splitting for 
$\Gamma\backslash G/M$ via $\pi_{\Gamma}$. 
With this notation, for $\lambda\in\a^*_\C$, let 
\begin{eqnarray*}
  \mc{R}_\pm(\lambda) 
 =\{u\in \mathcal D'(G/M)\mid (X\mp \lambda(H_0))u=0,
 \forall U_\pm \in C^\infty(G/M;G\times_M \mathfrak n_\pm),  U_\pm u =0\}.
\end{eqnarray*}

\begin{rem}\label{lem:firs_band_Gamma_inv_vectors}
  In view of the above characterizations of $\tu{Res}^0_{X}(\lambda)$   and $\tu{Res}^0_{X^*}(\lambda)$ we obtain the linear isomorphisms
 \[
  (\pi_{\Gamma})^*:\tu{Res}_{X}^0(\lambda(H_0)) \to {^\Gamma \mc{R}_-(\lambda)}, \quad
  (\pi_{\Gamma})^*:\tu{Res}_{X^*}^0(\lambda(H_0)) \to {^\Gamma \mc{R}_+(\lambda)},
 \]
where $^\Gamma \mc{R}_\pm(\lambda)$ denotes the subspace of $\Gamma$-invariant elements of $\mc{R}_\pm(\lambda)$.
\end{rem}

\subsection{Points at infinity} 
Next we want to identify the first band of (co)-resonant states with distributions on the Furstenberg 
boundary $\pl{\bf H}$ of the symmetric space, which is identified with $G/P=K/M$ where $P=MAN$ is the minimal parabolic of $G$. Let us explain how this boundary can be naturally obtained from the geodesic flow on 
$G/M$. For any point $y\in S{\bf H}=G/M$ we define the limiting points of the geodesic passing through $y$:
\[ B_\pm(y):=\lim_{t\to +\infty}\til\pi_0(\varphi_{\pm t}(y))\in \pl{\bf H} \] 
if $\til\pi_0:S{\bf H}\to {\bf H}$ is the projection onto the base. In terms of Lie groups, the resulting maps are simply the projections 
\begin{equation}\label{eq:initial_end_point_map}
B_\pm\colon  G/M \to G/P=K/M,\quad gM\mapsto gw_\pm P, 
\end{equation}
where $w_+$ represents the trivial and $w_-$ the nontrivial element of the Weyl group $W=N_K(A)/M\sim \Z_2$    (cf. \cite{DFG15,Hi05,HHS12}); here $N_K(A)$ denotes the normalizer of $A$ in $K$.  We will refer to $B_-$ and $B_+$ as the \emph{initial} respectively the \emph{end point map}. 
 
\begin{rem}\label{lem:projection_properties}
The values of the initial and the end point map are invariant under the geodesic
 flow on $G/M$, i.e.
 \begin{equation}
  \label{eq:pr_flow_invariance}
   \forall a\in A, \quad B_{\pm}(gaM)=B_{\pm}(gM) .
 \end{equation}
  Furthermore, the initial and the end point map are invariant under changes in the unstable, resp. the stable direction.
 \begin{equation}
  \label{eq:pr_stable_invariance}
   \forall U_\pm\in \n_\pm :\quad \frac{d}{ds}_{|s=0}B_\pm (ge^{s U_\pm}M)=0.
 \end{equation}
 To see \eqref{eq:pr_flow_invariance} and \eqref{eq:pr_stable_invariance} for the initial point map, note that $\Ad(w_-)$ interchanges $\n_+$ and $\n_-$  and is $-\mathrm{id}$ on $\mathfrak a$. Finally, both maps intertwine the left $G$-actions on $G/M$ and $K/M=G/P$
 \begin{equation}
  \label{eq:pr_intertwining}
  \forall \gamma\in G~:~B_\pm(\gamma gM) = \gamma\cdot B_\pm(gM)
 \end{equation}
\end{rem} 

\begin{lem}\label{lem:Q_pm}
The  maps
\[
 \mathcal Q_\pm: \Abb{\mathcal D'(\pl{\bf H})}{\mc{R}_\pm(0)\subset \mc{D}'(S{\bf H})}{T}{B_{\pm}^*T}
 \]
 are topological, linear isomorphisms intertwining the pullback actions of $G$  on $S{\bf H}=G/M$ and $\pl{\bf H}=K/M$.
\end{lem}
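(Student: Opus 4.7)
The plan is to view $B_\pm \colon G/M \to K/M$ as a smooth surjective submersion whose vertical tangent bundle at each point is spanned by the left-invariant vector fields coming from $\mathfrak{a} \oplus \mathfrak{n}_\pm$; with this, $\mathcal{R}_\pm(0)$ becomes exactly the subspace of $\mathcal{D}'(G/M)$ annihilated by all vector fields tangent to the fibres of $B_\pm$, and $\mathcal{Q}_\pm = B_\pm^*$ identifies with the inverse of distributional descent along $B_\pm$.

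First I would work out the geometric structure of $B_\pm$. Using the Iwasawa decomposition $G = KAN$ together with $w_+\in M$, a direct computation gives $B_+(kan\,M) = kM$, so $B_+$ is a smooth surjective fibration over $K/M$ with fibre $P/M \cong AN$. For $B_-$, the analogous decomposition $G = KA\bar N$ together with $w_-^{-1}\bar N w_- = N$ and $\mathrm{Ad}(w_-)|_{\mathfrak{a}} = -\mathrm{id}$ (the latter being responsible for $H_0 \mapsto -H_0$) shows that $B_-$ is a smooth surjective fibration with fibre $A\bar N$; in both cases the fibre is connected, since $A \cong \mathbb{R}$ and $N, \bar N$ are simply connected nilpotent. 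From the three items of Remark~\ref{lem:projection_properties} one reads that the left-invariant vector fields corresponding to $H_0$ and to any $U_\pm \in \mathfrak{n}_\pm$ lie in $\ker dB_\pm$, and a dimension count $\dim \mathfrak{a} + \dim \mathfrak{n}_\pm = \dim(G/M) - \dim(K/M)$ shows they span it pointwise.

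Since $B_\pm$ is a smooth submersion, the pullback $B_\pm^*\colon \mathcal{D}'(K/M) \to \mathcal{D}'(G/M)$ is a well-defined, sequentially continuous linear map in the weak-$\ast$ topology (the unique extension of pullback on smooth functions). For smooth $T$, the chain rule together with the previous step gives $X(B_\pm^* T) = 0$ and $U_\pm(B_\pm^* T) = 0$ for every smooth section $U_\pm$ of $G \times_M \mathfrak{n}_\pm$; extending by continuity, $\mathcal{Q}_\pm T \in \mathcal{R}_\pm(0)$ for every $T \in \mathcal{D}'(K/M)$. The $G$-intertwining property is immediate from $B_\pm \circ L_\gamma = L_\gamma \circ B_\pm$ (third item of Remark~\ref{lem:projection_properties}), and injectivity of $\mathcal{Q}_\pm$ follows from surjectivity of $B_\pm$ (test against $\phi \otimes \chi$ in a local trivialisation with $\int \chi = 1$).

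The crucial and only nontrivial point is the surjectivity of $\mathcal{Q}_\pm$ onto $\mathcal{R}_\pm(0)$: any $u \in \mathcal{D}'(G/M)$ annihilated by the vertical vector fields (those in $\mathfrak{a} \oplus \mathfrak{n}_\pm$) must be the pullback $B_\pm^* T$ of a unique $T \in \mathcal{D}'(K/M)$. I would prove this locally: choose an open cover of $K/M$ over which $B_\pm$ trivialises as $U \times F \to U$ with $F$ the connected fibre, so that the vertical vector fields give a global frame for the tangent bundle of $F$. A distribution on $U \times F$ killed by every $\partial_v$ is, by the Schwartz kernel theorem (equivalently, a distributional Frobenius argument), of the form $T \otimes 1$ for a unique $T \in \mathcal{D}'(U)$; these local descents then glue via a partition of unity on $K/M$ and the uniqueness statement. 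Continuity of the inverse $\mathcal{Q}_\pm^{-1}$ in the weak-$\ast$ topology is automatic from this local description, yielding the claimed topological isomorphism. The main obstacle is precisely this distributional descent: for smooth functions connectedness of fibres suffices, but for distributions one must invoke the Schwartz kernel theorem together with a careful gluing argument.
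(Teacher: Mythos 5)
Your argument is correct and follows the same overall route as the paper: identify the fibers of $B_\pm$ as $AN$ (resp.\ $A\overline N$), observe that the left-invariant fields from $\mathfrak a\oplus\mathfrak n_\pm$ span the vertical bundle, deduce from the defining equations that elements of $\mc{R}_\pm(0)$ are constant along the fibers, and descend them to $\mathcal D'(K/M)$. The only genuine difference is how you get the topological isomorphism, i.e.\ continuity of the inverse. You argue locally: in a trivialization $U\times F$ the inverse is $u\mapsto \bigl(\phi\mapsto u(\phi\otimes\chi)\bigr)$ for a fixed $\chi$ with $\int_F\chi=1$, which is manifestly sequentially continuous and glues over $K/M$. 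The paper instead runs a microlocal argument: the equations force $\WF(u)$ into the annihilator of the vertical bundle, so H\"ormander's pullback theorem makes the restriction $\iota^*$ along the section $\iota\colon K/M\hookrightarrow G/M$ (with $B_\pm\circ\iota=\mathrm{Id}$) well-defined and continuous, and $\iota^*$ is then the inverse of $\mathcal Q_\pm$. Both are valid; the paper's version is shorter once wavefront-set machinery is in place, while yours is more elementary and also spells out the descent step that the paper states in one line. Two small remarks on your write-up: the local descent fact — that a distribution on $U\times F$ killed by all vertical derivatives is $T\otimes 1$ — is a distributional Poincar\'e-lemma argument (any compactly supported $\chi-\chi'$ with integral zero on $F\cong\mathbb R^m$ is a sum of coordinate derivatives of compactly supported functions), not really an invocation of the Schwartz kernel theorem; and you should make that independence-of-$\chi$ argument explicit, since it is what makes the local descent well-defined before gluing.
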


\begin{proof}
 As $B_{\pm}:G/M\to K/M$ are surjective submersions, the pullback operators $B_{\pm}^*$ are injective operators on  $\mathcal D'(K/M)$. We show that their images lie in the indicated spaces: for $T\in \mathcal D'(K/M)$, Remark~\ref{lem:projection_properties} implies
\[
X (B^*_{\pm}T)=0 \quad \textrm{ and } U_\pm (B^*_\pm T)=0
\]
for all  $U_\pm\in C^\infty(G/M;G\times_M \n_\pm)$.
For the surjectivity of $\mathcal Q_+$ consider a $v\in \mathcal D'(G/M)$ such that
\begin{equation}
 \label{eq:flow_and_instable_invariance}
X v= 0\tu{ and }~\forall U_+ \in C^\infty(G/M; G\times_M \n_+),~ 
U_+ v= 0~.
\end{equation}
Now consider the $AN$-fiber bundle $B_+: G/M \to G/P$  and note that \eqref{eq:flow_and_instable_invariance} 
implies that $v$ is a distribution which is constant along the fibers. Thus there is a $T_+\in\mathcal D'(G/P)$ such that $v= B_+^*T_+ $. Analogously, we obtain the surjectivity of $\mathcal Q_-$.

The fact that $\mathcal Q_\pm$ intertwines the pullback action is a direct consequence of the intertwining property \eqref{eq:pr_intertwining} of $B_\pm$.

The continuity of $\mathcal Q_\pm$ follows directly from the continuity of push-forwards of distributions under submersions. For the continuity of the inverse map consider the embedding $\iota: K/M \hookrightarrow G/M$.
Then \eqref{eq:flow_and_instable_invariance} implies that for elements $v\in \mc{R}_+(0)$, $\WF(v)$ is a subset of the annihilator of $G\x_M(\a \oplus\n_+)$ in $T^*(G/M)$. 
Thus the Hörmander condition for pullbacks \cite[Thm 8.2.4]{HoeI} implies that $\iota^*:\mc{R}_\pm(0) \to \mathcal D'(K/M)$ is well defined and continuous.
As $B_\pm\circ \iota = \tu{Id}_{K/M}$ and $\mathcal Q_\pm$ is bijective, we find that $\iota^*=\mathcal Q_\pm^{-1}$ and we deduce the continuity of the inverse map.
\end{proof}

Similar isomorphisms can also be defined for nontrivial resonances $\lambda\neq 0$. This requires the so called \emph{Busemann function} (based at the origin) defined on ${\bf H}\x \pl{\bf H}=G/K\times K/M$ by
\begin{equation}
 \label{eq:def_horocycle}
 \beta: \Abb{G/K\times K/M}{\a}{(gK,kM)}{\beta(gK,kM) :=-H(g^{-1}k)},
\end{equation}
where $H\colon G\to \mathfrak a$ is given by $H(kan)=\log a$. Using this horocycle bracket as well as the initial and end point maps, we can define the smooth functions on 
\begin{equation}
 \label{eq:def_Phi_pm}
  \Phi_{\pm}:\Abb{G/M}{\R}{gM}{e^{\nu_0 \beta(gK,B_\pm(gM))}},
 \end{equation}
where $\nu_0:=\alpha_0/\|\alpha_0\|$. A straightforward computation using (\ref{eq:pr_flow_invariance}) and (\ref{eq:pr_stable_invariance}) shows that they are $\pm 1 $ eigenfunctions of the geodesic vector field,
\begin{equation}
\label{eq:Phi_pm_flow}
X\Phi_\pm = \frac{d}{dt}_{|t=0} \Phi_\pm (ge^{tH_0} M)  = \pm\Phi_\pm,
\end{equation}
and they are constant in the stable and unstable directions, respectively:
\begin{equation}
\label{eq:Phi_pm_stab_unstab}
  \forall U_\pm\in \n_\pm :\quad U_\pm\Phi_\pm= \frac{d}{ds}_{|s=0} \Phi_{\pm}(ge^{sU_\pm} M) =0.
\end{equation}

For $\gamma\in G$, $x\in {\bf H}=G/K$ and $b\in \pl{\bf H}= K/M$  we have the following equalities
\begin{eqnarray}\label{eq:horo_group_action1}
 \beta (\gamma x,\gamma b) &=& \beta(x,b) + \beta(\gamma K,\gamma b)\\
\label{eq:horo_group_action2}
\beta(\gamma K,\gamma b) &=& -\beta(\gamma^{-1}K,b),
\end{eqnarray}
see for example \cite[Lemma 2.3]{HHS12}. Then for $\gamma, g \in G$ we obtain 
\begin{eqnarray*}
 \Phi_\pm(\gamma gM) &\underset{(\ref{eq:pr_intertwining})}=&
 e^{\nu_0\beta( \gamma gK,\gamma.B_\pm(gM))}
 \underset{(\ref{eq:horo_group_action1})}{=} e^{\nu_0\beta( gK,B_\pm(gM))} e^{\nu_0\beta(\gamma K, \gamma B_\pm(gM))} \\
 &\underset{(\ref{eq:horo_group_action2})}{=}&e^{\nu_0\beta(gK,B_\pm(gM))} e^{-\nu_0\beta( \gamma^{-1} K, B_\pm(gM))}.
\end{eqnarray*}
If we introduce, generalizing \cite[(3.26)]{DFG15}, the function
\[
 N_\gamma(kM) := e^{-\nu_0\beta(\gamma^{-1} K, kM)}
\]
 on $\pl{\bf H}=K/M$, we get the identity (cf. \cite[(3.28)]{DFG15})
\begin{equation}
 \label{eq:Phi_pm_trafo}
 \Phi_\pm(\gamma gM) = N_\gamma(B_\pm(gM))\Phi_\pm( gM).
\end{equation}

A last ingredient is the compact picture of the spherical principal series.

\begin{Def}\label{prop:compact_pic}
Let $\mu\in\a^*_\C$ and $H_\mu:= L^2(\pl{\bf H})$ be the Hilbert space of square integrable  functions w.r.t. the $K$-invariant  measure on $\pl{\bf H}=K/M$. Then the \emph{spherical principal  series representation} $(\pi^{\mathrm{cpt}}_\mu, H_\mu)$ is the representation of $G$ on  $H_\mu$ given by 
\begin{eqnarray}\label{eq:compact_picture}
 (\pi^{\mathrm{cpt}}_\mu(\gamma) f) (k'M) 
 &:=& e^{-(\mu+\rho)H(\gamma^{-1}k')}f(k(\gamma^{-1}k')M)\\
 &=&\nonumber
 e^{(\mu+\rho)\beta(\gamma K,k'M)}f(k(\gamma^{-1}k')M).
\end{eqnarray}
Here  $k(\gamma^{-1}k')=k_{KAN}(\gamma^{-1}k')$ denotes the $K$-component of $\gamma^{-1}k'\in G$ in the $KAN$-decomposition.
\end{Def}
Note that these principal series representations are unitary iff $\mu\in i\R$.
Furthermore we can express (\ref{eq:compact_picture}) via the functions $N_\gamma$ as follows:
\begin{equation}
 \label{eq:compact_picture_N_gamma}
 (\pi^{\mathrm{cpt}}_\mu(\gamma) f)= N_{\gamma^{-1}}^{-(\mu+\rho)(H_0)}(\gamma^{-1})^* f,
\end{equation}
where $(\gamma^{-1})^*$ is the pullback of distributions with the diffeomorphism
obtained by the left $G$-action on $K/M=G/P$.

\begin{prop}\label{prop:classical_compact_intertwining}
 For $\lambda \in \a^*_\C$ the initial and end point transformations  defined by
\begin{equation}
 \label{eq:Q_lambda_def}
  \mathcal Q_{\lambda,\pm}(T) := \Phi_\pm^{\lambda(H_0)}\mathcal Q_\pm(T)
\end{equation}
 are 
topological isomorphisms
\[
  \mathcal Q_{\lambda,\pm}: \mathcal D'(\pl{\bf H})\to \mc{R}_\pm(\lambda)
\]
intertwining the left regular representation on
$\mc{R}_\pm(\lambda) \subset \mc{D}'(S{\bf H})$ with the representation 
$\big(\pi^{\mathrm{cpt}}_{-(\lambda+\rho)},\mathcal D'(\pl{\bf H})\big)$.
\end{prop}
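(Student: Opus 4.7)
The plan is to bootstrap from Lemma~\ref{lem:Q_pm}, which handles the case $\lambda=0$, by multiplying with the smooth, strictly positive function $\Phi_\pm^{\lambda(H_0)}=\exp(\lambda(H_0)\log\Phi_\pm)$. Since $\Phi_\pm>0$ everywhere, multiplication by $\Phi_\pm^{\lambda(H_0)}$ (and by its inverse $\Phi_\pm^{-\lambda(H_0)}$) is a topological automorphism of $\mc D'(G/M)$, so bijectivity and bicontinuity of $\mc Q_{\lambda,\pm}$ will follow from the corresponding properties of $\mc Q_\pm$ together with the observation that $u\in\mc R_\pm(\lambda)$ if and only if $\Phi_\pm^{-\lambda(H_0)}u\in\mc R_\pm(0)$.

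First I would verify that $\mc Q_{\lambda,\pm}(T)$ really lies in $\mc R_\pm(\lambda)$. For $T\in\mc D'(\pl{\bf H}^n)$ and $u:=\Phi_\pm^{\lambda(H_0)}B_\pm^*T$, the Leibniz rule together with \eqref{eq:Phi_pm_flow} and the flow-invariance $XB_\pm^*T=0$ gives $Xu=\pm\lambda(H_0)u$, while \eqref{eq:Phi_pm_stab_unstab} and \eqref{eq:pr_stable_invariance} yield $U_\pm u=0$ for every smooth section $U_\pm$ of $G\x_M\n_\pm$. Conversely, given $v\in\mc R_\pm(\lambda)$, the same Leibniz computation shows $w:=\Phi_\pm^{-\lambda(H_0)}v\in\mc R_\pm(0)$, so by Lemma~\ref{lem:Q_pm} there is a unique $T\in\mc D'(K/M)$ with $B_\pm^*T=w$, whence $\mc Q_{\lambda,\pm}(T)=v$. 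Continuity of $\mc Q_{\lambda,\pm}^{-1}$ is obtained, as in Lemma~\ref{lem:Q_pm}, by pulling $\Phi_\pm^{-\lambda(H_0)}v$ back along the embedding $\iota:K/M\hookrightarrow G/M$, which is admissible by the wavefront condition.

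The substantive computation is the intertwining property. Denote the left regular representation by $L(\gamma)u(gM)=u(\gamma^{-1}gM)$. A direct evaluation using \eqref{eq:pr_intertwining} and the transformation rule \eqref{eq:Phi_pm_trafo} (applied with $\gamma^{-1}$ in place of $\gamma$) gives
\begin{align*}
\bigl(L(\gamma)\mc Q_{\lambda,\pm}T\bigr)(gM)
&=\Phi_\pm(\gamma^{-1}gM)^{\lambda(H_0)}\,T\!\bigl(\gamma^{-1}B_\pm(gM)\bigr)\\
&=\Phi_\pm(gM)^{\lambda(H_0)}\,N_{\gamma^{-1}}\!\bigl(B_\pm(gM)\bigr)^{\lambda(H_0)}\,T\!\bigl(\gamma^{-1}B_\pm(gM)\bigr).
\end{align*}
Comparing with \eqref{eq:compact_picture_N_gamma} and noting that for $\mu=-(\lambda+\rho)$ one has $-(\mu+\rho)(H_0)=\lambda(H_0)$, the bracketed factor is exactly $\bigl(\pi^{\mathrm{cpt}}_{-(\lambda+\rho)}(\gamma)T\bigr)(B_\pm(gM))$, hence $L(\gamma)\mc Q_{\lambda,\pm}T=\mc Q_{\lambda,\pm}\pi^{\mathrm{cpt}}_{-(\lambda+\rho)}(\gamma)T$, as required.

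The only real bookkeeping challenge is matching the parameter $\mu=-(\lambda+\rho)$ correctly; it arises from the identity $\Phi_\pm(\gamma^{-1}gM)/\Phi_\pm(gM)=N_{\gamma^{-1}}(B_\pm(gM))$ combined with the shift by $\rho$ built into the compact picture via the Iwasawa $H$-function. Everything else reduces to Leibniz's rule, the $G$-equivariance of $B_\pm$, and the fact that $\Phi_\pm$ is nowhere zero, so no genuine obstacle is expected beyond keeping signs and sides of the group action straight.
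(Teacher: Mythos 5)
Your proof is correct and follows essentially the same route as the paper: deduce the topological isomorphism from Lemma~\ref{lem:Q_pm} by observing that multiplication by $\Phi_\pm^{\pm\lambda(H_0)}$ interchanges $\mc R_\pm(0)$ and $\mc R_\pm(\lambda)$, then verify intertwining by the pullback computation using \eqref{eq:Phi_pm_trafo}, \eqref{eq:pr_intertwining}, and \eqref{eq:compact_picture_N_gamma} with $\mu=-(\lambda+\rho)$. You merely spell out the first step in a bit more detail than the paper's one-line dismissal.
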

\begin{proof} In view of Lemma~\ref{lem:Q_pm} and the properties of $\Phi_\pm$, the property of 
$\mathcal Q_{\lambda,\pm}$ being a topological isomorphism is clear. 
It only remains to verify the intertwining property: 
for $\gamma \in G$ and $T\in\mathcal D'(K/M)$
we compute
\begin{eqnarray*}
 (\gamma^{-1})^*(\mathcal Q_{\lambda,\pm}T) &=&\Big((\gamma^{-1})^* \Phi_\pm^{\lambda(H_0)}\Big)
                                               \cdot\Big((\gamma^{-1})^* B_\pm^*T\Big)\\
 &\underset{(\ref{eq:Phi_pm_trafo}),(\ref{eq:pr_intertwining})}{=}&
         \Big(B_\pm^*(N_{\gamma^{-1}}^{\lambda(H_0)}) \Phi_\pm^{\lambda(H_0)}\Big)\cdot
         \Big( B_\pm^*(\gamma^{-1})^*T\Big)\\
  &=&\Big(\Phi_\pm^{\lambda(H_0)}\Big)\cdot
  \Big( B_\pm^*\left(N_{\gamma^{-1}}^{\lambda(H_0)}\cdot (\gamma^{-1})^* T\right)\Big)\\
  &\underset{(\ref{eq:compact_picture_N_gamma}), \lambda= -(\mu+\rho)}{=}&\Big(\Phi_\pm^{\lambda(H_0)}\Big)\cdot
  \Big( \mathcal Q_\pm(\pi_\mu^{\mathrm{cpt}}(\gamma) T)\Big)\\
  &= & \mathcal Q_{\lambda,\pm}(\pi^{\mathrm{cpt}}_\mu(\gamma)T).
\end{eqnarray*}
\end{proof}

Combining Proposition~\ref{prop:classical_compact_intertwining} with Remark~\ref{lem:firs_band_Gamma_inv_vectors} we arrive at the promised description of the first band of Ruelle resonances.

\begin{prop} \label{prop:ruelle_gamma_inv_distr}
 There are isomorphisms of finite dimensional vector spaces
 \[
  \tu{Res}_{X}^0(\lambda) \cong {^\Gamma(H_{-(\lambda+\rho)}^{-\infty})}
 \quad
\text{and}
\quad
  \tu{Res}_{X^*}^0(\lambda) \cong {^\Gamma(H_{-(\lambda+\rho)}^{-\infty})},
\]
where ${^\Gamma(H_{-(\lambda+\rho)}^{-\infty})}$ denotes the spaces of $\Gamma$-invariant distributional vectors in the spherical principal series with spectral parameter $\mu=-(\lambda+\rho)$.
\end{prop}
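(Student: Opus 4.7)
My strategy is to chain together the two isomorphisms already proven in this section: Remark~\ref{lem:firs_band_Gamma_inv_vectors} lifts the first-band resonant and co-resonant spaces on $\mc{M}$ to $\Gamma$-invariant elements of $\mc{R}_-(\lambda)$ and $\mc{R}_+(\lambda)$ on the cover, while Proposition~\ref{prop:classical_compact_intertwining} realizes $\mc{R}_\pm(\lambda)$ as a topological copy of $\mc{D}'(\bhspc)$ carrying the spherical principal series representation $\pi^{\mathrm{cpt}}_{-(\lambda+\rho)}$. The only genuinely new ingredient needed is the representation-theoretic identification of $\mc{D}'(\bhspc)$, under this action, with the distribution vectors $H^{-\infty}_{-(\lambda+\rho)}$.

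First I would verify the identification $H_\mu^{-\infty}\cong \mc{D}'(K/M)$. Because $K/M=\bhspc$ is compact, a standard argument (elliptic regularity for the Casimir of $K$, together with density of the $K$-finite vectors) shows that the smooth vectors of the compact-picture representation $(\pi^{\mathrm{cpt}}_\mu,L^2(K/M))$ are exactly $C^\infty(K/M)$ with its usual Fréchet topology; dualizing identifies $H_\mu^{-\infty}$ with $\mc{D}'(K/M)$, and the $G$-action is the continuous extension of the defining formula for $\pi^{\mathrm{cpt}}_\mu$ from smooth functions to distributions.

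Next I would restrict the topological isomorphisms $\mathcal Q_{\lambda,\pm}$ to $\Gamma$-invariants. Since by Proposition~\ref{prop:classical_compact_intertwining} the map $\mathcal Q_{\lambda,\pm}$ intertwines $\pi^{\mathrm{cpt}}_{-(\lambda+\rho)}(\gamma)$ with the pullback action $(\gamma^{-1})^*$ on $G/M$, a distribution $T\in\mc{D}'(\bhspc)$ satisfies $\pi^{\mathrm{cpt}}_{-(\lambda+\rho)}(\gamma)T=T$ for every $\gamma\in\Gamma$ if and only if $\mathcal Q_{\lambda,\pm}T$ is invariant under the $\Gamma$-action on $\mc{D}'(G/M)$. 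Hence $\mathcal Q_{\lambda,\pm}$ restricts to topological bijections ${^\Gamma H^{-\infty}_{-(\lambda+\rho)}}\to{^\Gamma\mc{R}_\pm(\lambda)}$. Composing with the inverses of the pullback isomorphisms $(\pi_\Gamma)^*$ from Remark~\ref{lem:firs_band_Gamma_inv_vectors} produces the two claimed isomorphisms. Finite-dimensionality is inherited from the Fredholm theory of Section~\ref{sec:PRR}, since $\tu{Res}^0_X(\lambda)\subset\tu{Res}_X(\lambda)$ is contained in the kernel of a Fredholm operator by Proposition~\ref{thm:fredholm_property}, and similarly for the co-resonant side. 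I do not foresee any real obstacle — all the substantive analytic work is already done in Proposition~\ref{prop:classical_compact_intertwining} — so the only point requiring attention is keeping the three equivalent notions of $\Gamma$-invariance aligned: pullback by left translation on $G/M$, invariance under the compact-picture action $\pi^{\mathrm{cpt}}_{-(\lambda+\rho)}$, and the abstract representation-theoretic notion on $H^{-\infty}_{-(\lambda+\rho)}$.
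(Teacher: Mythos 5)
Your proof is correct and follows the paper's own route: the paper presents this proposition as a direct consequence of combining Proposition~\ref{prop:classical_compact_intertwining} with Remark~\ref{lem:firs_band_Gamma_inv_vectors}, which is exactly your chain of isomorphisms, and the identification $H^{-\infty}_\mu\cong\mc{D}'(\pl{\bf H}^n)$ you justify is taken in the paper as a standard fact (cf.\ the discussion following Proposition~\ref{thm:poisson_trafo}).
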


After having described the Ruelle resonances by distributions on the
boundary, we now turn to the description of generalized resonant states
via boundary distributions.
\begin{prop}\label{prop:ruell_jordan_bv}
 Let $\lambda\in \C$ be a Ruelle resonance of $X$ on $\mc{M}=\Gamma\backslash G/M$. Then the following conditions are equivalent.
 \begin{itemize}
 \item[(1)]
 There is a Jordan block of first band resonant states of  size $J$, 
 i.e. there are distributions $u_0,\ldots, u_{J-1} \in \mc{D}'(\mc{M})$ 
 and some $\la$ such that
  \begin{equation}\label{relationJB}
  (X+\lambda)u_0 = 0 \tu{ and } (X + \lambda)u_k = u_{k-1} \tu{ for } k=1,\ldots,J-1
 \end{equation}
 and $U_- u_k=0$ for all smooth sections $U_-$ of the bundle $E_u=\Gamma\backslash G\times_M \n_-$.

\item[(2)] There exist distributions $T_0,\ldots, T_{J-1}\in \mathcal D'(\pl{\bf H})$
 such that for $1\leq k\leq J-1$ and all $\gamma\in \Gamma$
 \begin{equation}\label{eq:T_k_transformation}
  \gamma^* T_k = N_\gamma^{-\lambda}\sum_{l=0}^k\frac{(\log N_\gamma)^{k-l}}{(k-l)!}T_{l}
 \end{equation}
 \end{itemize}
\end{prop}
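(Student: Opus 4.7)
The plan is to lift everything to the universal cover $G/M$ and exploit the two natural building blocks already at hand: the isomorphism $\mathcal{Q}_{\lambda,-}\colon \mathcal{D}'(\partial\mathbf{H}^n)\to \mathcal R_-(\lambda)$ from Proposition~\ref{prop:classical_compact_intertwining}, and the auxiliary function $\tau:=-\log\Phi_-\in C^\infty(G/M)$, which is well defined because $\Phi_->0$. A direct computation from \eqref{eq:Phi_pm_flow} and \eqref{eq:Phi_pm_stab_unstab} gives the key identities $X\tau=1$ and $U_-\tau=0$. The natural ansatz for the lifts $\tilde u_k:=\pi_\Gamma^*u_k$ is
\[
\tilde u_k \;=\; \Phi_-^{\lambda}\sum_{l=0}^{k}\frac{\tau^{k-l}}{(k-l)!}\,B_-^*T_l,\qquad k=0,\dots,J-1,
\]
and, writing $\tilde u_k=\Phi_-^{\lambda}v_k$, the relations $(X+\lambda)\tilde u_k=\tilde u_{k-1}$ and $U_-\tilde u_k=0$ are equivalent to $Xv_k=v_{k-1}$ and $U_-v_k=0$ (using $X\Phi_-^{\lambda}=-\lambda\Phi_-^{\lambda}$). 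Both conditions are automatic from the ansatz.

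For the direction (1)$\Rightarrow$(2), lift to $G/M$ and construct the $T_k$'s by induction. For $k=0$, Lemma~\ref{lem:Q_pm} applied to $v_0\in\mathcal R_-(0)$ produces $T_0$ with $v_0=B_-^*T_0$. Assuming $v_j=\sum_{l=0}^{j}\frac{\tau^{j-l}}{(j-l)!}B_-^*T_l$ for $j<k$, set
\[
v_k' \;:=\; v_k-\sum_{l=0}^{k-1}\frac{\tau^{k-l}}{(k-l)!}B_-^*T_l.
\]
A direct differentiation using $X\tau=1$ and the inductive formula for $v_{k-1}$ shows $Xv_k'=0$ and $U_-v_k'=0$, so that $v_k'\in\mathcal R_-(0)$, and another appeal to Lemma~\ref{lem:Q_pm} yields $T_k$ with $v_k'=B_-^*T_k$, completing the induction.

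It then remains to rewrite the $\Gamma$-invariance of each $\tilde u_k$ as a condition on the $T_k$'s. Using \eqref{eq:Phi_pm_trafo} and \eqref{eq:pr_intertwining} one has $\gamma^*\Phi_-^{\lambda}=(N_\gamma\circ B_-)^{\lambda}\Phi_-^{\lambda}$, $\gamma^*\tau=\tau-B_-^*(\log N_\gamma)$, and $\gamma^*B_-^*=B_-^*\gamma^*$. Substituting and expanding $(\tau-B_-^*\log N_\gamma)^{k-l}$ by the binomial theorem turns the equation $\gamma^*\tilde u_k=\tilde u_k$ into a polynomial identity in $\tau$ with coefficients that are $B_-$-pullbacks from $K/M$. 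Matching the coefficient of each power $\tau^j$ produces, at level $k$, the lower-triangular system
\[
N_\gamma^{\lambda}\sum_{l=0}^{k}\frac{(-\log N_\gamma)^{k-l}}{(k-l)!}\,\gamma^*T_l \;=\; T_k,
\]
and inverting the triangular matrix with entries $\frac{(-\log N_\gamma)^{m-l}}{(m-l)!}$ (its inverse has entries $\frac{(\log N_\gamma)^{m-l}}{(m-l)!}$ by a standard exponential-type combinatorial identity) gives exactly \eqref{eq:T_k_transformation}.

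The converse (2)$\Rightarrow$(1) is obtained by running the same calculation in reverse: define $\tilde u_k$ by the ansatz, observe that the Jordan block relations \eqref{relationJB} and $U_-\tilde u_k=0$ hold by construction, and use \eqref{eq:T_k_transformation} together with the expansion above to verify that $\gamma^*\tilde u_k=\tilde u_k$ for all $\gamma\in\Gamma$, so that $\tilde u_k$ descends to a distribution $u_k$ on $\mathcal M$. I expect the only delicate step to be the matching-of-powers argument: one has to justify that, at each level $k$, the coefficients of the various powers of $\tau$ can be identified separately as elements of $B_-^*\mathcal D'(K/M)$. This is legitimate because $B_-\colon G/M\to K/M$ is a submersion with connected $AN$-fibres and $\tau$ restricts to a non-constant affine parameter along the $A$-direction of each fibre; once this independence is granted, everything else is combinatorial bookkeeping.
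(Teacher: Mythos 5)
Your proof is correct and follows essentially the same route as the paper: lift to $G/M$, exploit $\Phi_-$ and its transformation law \eqref{eq:Phi_pm_trafo} together with the bijection of Lemma~\ref{lem:Q_pm}, and do a binomial expansion. The only cosmetic difference is that the paper defines $v_k := \Phi_-^{-\lambda}\sum_{l=0}^k\tfrac{(\log\Phi_-)^{k-l}}{(k-l)!}\tilde u_l$ (the inverse of your ansatz), so that $v_k=B_-^*T_k$ directly, and then reads off \eqref{eq:T_k_transformation} from $\gamma^*v_k=B_-^*(\gamma^*T_k)$ and injectivity of $B_-^*$, thereby skipping both your coefficient-matching step and the subsequent triangular-matrix inversion. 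Your ``delicate'' step is in any case harmless: from an identity $\sum_{j=0}^{N}\tau^j B_-^*S_j=0$, applying $X^N,X^{N-1},\dots$ (using $X\tau=1$ and $X(B_-^*S)=0$) gives $N!\,B_-^*S_N=0$ and hence $S_N=0$, then $S_{N-1}=0$, etc., so the coefficients are indeed separately determined.
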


\begin{proof}
 We start with a Jordan basis $u_k$  on $\mc{M}=\Gamma\backslash G/M$
 as in (1) and we lift them to a set of $\Gamma$-invariant distributions on the cover $S{\bf H}=G/M$
\begin{equation}
 \label{eq:jordan_relation}
 \til u_k := (\pi_{\Gamma})^*u_k, ~~k= 0,\ldots, J-1.
\end{equation}
where
 $\pi_{\Gamma} : S{\bf H}\to \mc{M}= S{\bf M}$ is the natural projection. They satisfy 
 the relations \eqref{relationJB} with $\til{u}_k$ replacing $u_k$.
Then we use the functions $\Phi_-$ defined in (\ref{eq:def_Phi_pm}) and define the distributions on $S{\bf H}$
 \begin{equation}
  \label{eq:v_k_u_k}
  v_k := \Phi_-^{-\lambda} \sum_{l=0}^k \frac{(\log \Phi_-)^{k-l}}{(k-l)!}\til u_l.
 \end{equation}
 From  (\ref{eq:Phi_pm_stab_unstab})
 we deduce $U_- v_k =0$ for all sections $U_-$ in $\til{E}_u$. 
 Using (\ref{eq:Phi_pm_flow}) as well as the $\Gamma$-invariance of
 $\til u_k$, a straightforward computation yields $X v_i =0$. According to 
 Lemma~\ref{lem:Q_pm} there are unique distributions $T_k\in \mathcal D'(\pl{\bf H})$ fulfilling
 $v_k = \mathcal Q_- T_k$. The transformation property (\ref{eq:T_k_transformation})
 finally follows, as $\mathcal Q_-$ intertwines the pullback actions, from the following computation for $\gamma\in \Gamma$:
 \begin{eqnarray*}
  \gamma^*v_k 
  &=& \gamma^*\Phi_-^{-\lambda}\sum_{l=0}^k \frac{(\log \gamma^*\Phi_-)^{k-l}}{(k-l)!}\til u_l\\
  &\underset{(\ref{eq:Phi_pm_trafo})}{=}&(B_-^* N_\gamma^{-\lambda}) \Phi_-^{-\lambda}
    \sum_{l=0}^k \frac{(\log \Phi_-+B_-^*\log N_\gamma)^{k-l}}{(k-l)!}\til u_l\\
    &=& (B_-^* N_\gamma^{-\lambda})\Phi_-^{-\lambda}
    \sum_{l=0}^k \sum_{r=0}^{k-l} \frac{(k-l)!}{r!(k-l-r)!}\frac{(\log \Phi_-)^r(B_-^*\log N_\gamma)^{k-l-r}}{(k-l)!}\til u_l\\
    &=&B_-^*N_\gamma^{-\lambda}\sum_{j=0}^k \frac{(B_-^*\log N_\gamma)^{k-j}}{(k-j)!}v_j
 \end{eqnarray*}
 This proves that (1) implies (2). 
 
The converse follows similarly: Given $T_0,\ldots, T_{J-1} \in \mathcal D'(\pl{\bf H})$ fulfilling 
 (\ref{eq:T_k_transformation}) we can define $v_k = \mathcal Q_- T_k$. Next, we can obtain  $\til u_l$ from the $v_k$ by (\ref{eq:v_k_u_k}). From (\ref{eq:T_k_transformation})  we conclude that $\til u\in \mathcal D'(S{\bf H})$ are $\Gamma$-invariant, thus we  obtain distributions $u_k\in \mathcal D'(\mc{M})$. By a straightforward computation
 they fulfill (\ref{eq:jordan_relation}) and $U_- u_k=0$ for all smooth sections
 $U_-$  of $E_u$. 
\end{proof}

\section{Quantum-classical correspondence}
\subsection{Poisson Transformation}
A central role for the relation between classical and quantum resonances is played by the Poisson transformation which we now introduce. 

As explained in Section \ref{sec:firstband}, we can identify $\mathfrak{a}^*$ with $\cc$
via $\la\mapsto \la(H_0)$, and we shall do so in what follows, writing $\la^2$ instead of $\la(H_0)^2$. 
Given a spectral parameter $\mu\in \mathfrak a_\C^*$ we introduce the eigenspace 
(cf.  \cite{vBS87})
\[
 \mathcal E_\mu({\bf H}) :=\{u\in \mathcal D'({\bf H})\mid (\Delta_{{\bf H}}-\rho^2+\mu^2) u=0\}
\]
for the positive Laplacian $\Delta_{{\bf H}}$ on ${\bf H}$. Note that by elliptic regularity the elements of $\mathcal E_\mu({\bf H})$ are real analytic. If we define the space of \emph{quantum eigenstates} of $\Delta_{\bf M}$ 
on ${\bf M}$ as
\[
 \tu{Eig}_{\Delta_{\bf M}}(\mu)
 :=\{u\in L^2({\bf M})\mid (\Delta_{\bf M} -\rho^2+\mu^2) u=0\},
\]
taking the lift to the universal cover $\til\pi_{\Gamma}: {\bf H} \to {\bf M}$ we obtain a bijection between eigenfunctions of $\Delta_{\bf M}$ and $\Gamma$-invariant elements in $\mathcal E_\mu({\bf H})$, denoted by
\[
  ^\Gamma \mathcal E_\mu({\bf H}):=\{f\in \mathcal E_\mu({\bf H})\mid   \forall \gamma\in \Gamma, \gamma^*f=f\}.
\]

\begin{Def}
 Let $\mu\in\a^*_\C$ and define
 \[
 p_\mu(gK, kM) := e^{(\mu+\rho) \beta(gK,kM)} \in C^\infty(G/K\times K/M),
 \]
which is the Schwartz kernel of the \emph{Poisson transformation} and  which defines,
using the $K$-invariant measure $\intd b:=\intd \mu_{\mathbb S^ {n-1}} = \intd(kM)$ on $K/M$, a linear operator 
 \[
  P_\mu: \Abb{\mathcal D'(K/M)}{C^\infty(G/K)}{T}{T(\intd b)[p_\mu(gK,kM)]}.
 \]
Here we use the notation that $T(\intd b)$ is the generalized density 
associated with the distribution $T$ via the invariant measure $\intd b$.
\end{Def}

In the case of rank $1$ symmetric spaces, kernel and image of the Poisson transformation have very explicit descriptions. In this paper we mostly restrict our attention to spectral parameters $\mu$ for which the Poisson transformation is injective.
The maximal domains of definition for the Poisson 
transformations are spaces of hyperfunctions. As we restrict our attention to spaces of distributions we need to introduce spaces of smooth functions with moderate 
growth in order to describe the image of our Poisson transformations.

For $f\in C^\infty({\bf H})$ and $r\geq 0$ the norm
\[
 \|f\|_r:= \sup_{x\in {\bf H}} \left|f(x) e^{-r\cdot d_{{\bf H}}(o,x)}\right|,
\]
where $d_{{\bf H}}$ is the Riemannian distance function on ${\bf H}=G/K$ and $o=eK$ is the base point of ${\bf H}$. Then we define
\[
 \mathcal E^r_\mu({\bf H}) :=\{f\in \mathcal E_\mu({\bf H}), \|f\|_r\leq\infty\}.
\]
The space of eigenfunctions of \emph{weak moderate growth} (see \cite[Remark 12.5]{vBS87}) can then be defined as
\begin{align*}
 \mathcal E^*_\mu({\bf H}) := \bigcup_{r>0}\mathcal E^r_\mu({\bf H})
\end{align*}
and we can equip it with the direct limit topology. 

In the following proposition we collect the mapping properties of the Poisson transformation we will use (cf. \cite{vBS87}).

\begin{prop}\label{thm:poisson_trafo}
Define the set of \emph{exceptional parameters} to be
\begin{equation}
 \label{eq:exceptional_points}
 \mathbf{Ex} := \left(-\frac{m_{\alpha_0}}{2} - 1-2\N_0\right)\alpha_0
 \bigcup\left(-\frac{m_{\alpha_0}}{2} - m_{2\alpha_0}-2\N_0\right)\alpha_0\subset \a^*_\C.
\end{equation}
For $\mu\in \a^*_\C$ the Poisson transformation $P_\mu$ is a bounded continuous map

\begin{equation}\label{eq:P_mu_isom_dist}
 P_\mu:\mathcal D'(\pl{\bf H})\to \mathcal E^*_\mu({\bf H}),
\end{equation}
 which is a topological isomorphism if and only if $\mu \notin \mathbf{Ex}$.
 \end{prop}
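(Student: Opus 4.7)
The plan is to combine standard estimates on the Poisson kernel with the Helgason boundary value inversion established in \cite{vBS87}. The proof splits into three tasks: first, to show that $P_\mu T$ is always a real-analytic eigenfunction of weak moderate growth; second, to upgrade this to continuity of the linear map $P_\mu\colon \mathcal D'(\pl{\bf H}^n)\to \mathcal E^*_\mu({\bf H}^n)$; and third, to identify precisely when $P_\mu$ fails to be a topological isomorphism.

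For the first two tasks, differentiation under the distributional pairing is justified by the smoothness of the kernel $p_\mu(gK,kM) = e^{(\mu+\rho)\langle gK, kM\rangle}$ in both arguments, and a direct computation in the Iwasawa decomposition shows that $p_\mu(\cdot,kM)$ is a $(\rho^2-\mu^2)$-eigenfunction of $\Delta_{{\bf H}^n}$ for every fixed $kM$, whence $P_\mu T\in \mathcal E_\mu({\bf H}^n)$. Weak moderate growth follows from the elementary bound $|\langle gK,kM\rangle|\leq d_{{\bf H}^n}(o,gK)$: picking some order $N$ and corresponding seminorm constant $C$ for $T$, one gets
\[
 |P_\mu T(gK)| \leq C\sup_{|\beta|\leq N,\, k\in K}|\partial_k^\beta p_\mu(gK,kM)|\leq C' e^{r\cdot d_{{\bf H}^n}(o,gK)}
\]
with $r=|\Re(\mu+\rho)|+\epsilon_N$. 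This places $P_\mu T$ in $\mathcal E^r_\mu({\bf H}^n)$; running the estimate uniformly over bounded families of distributions yields continuity into the direct limit topology on $\mathcal E^*_\mu({\bf H}^n)$.

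The isomorphism characterization relies on the asymptotic expansion of eigenfunctions of moderate growth: for generic $\mu$, every $f\in \mathcal E^*_\mu({\bf H}^n)$ admits along the rays $t\mapsto k\exp(tH_0)K$ a convergent expansion whose two leading terms behave like $e^{(\pm\mu-\rho)(tH_0)}$. The coefficient of $e^{(-\mu-\rho)(tH_0)}$ defines a distribution $\beta_\mu^-(f)\in \mathcal D'(K/M)$ and provides a continuous left inverse of $P_\mu$ up to the Harish-Chandra $c$-function, namely $\beta_\mu^-\circ P_\mu = c(\mu)\,\tu{Id}$; this is essentially the content of the Helgason conjecture as proved in \cite[Thm.~12.3]{vBS87}. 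Consequently $P_\mu$ is a topological isomorphism precisely when the relevant $c$-function quotient is neither zero nor infinite. The main obstacle is the final bookkeeping step: using the rank-one Gindikin-Karpelevi\v c product formula, in which $c(\mu)$ is written as a quotient of $\Gamma$-factors in $\mu/\|\alpha_0\|$ associated with both roots $\alpha_0$ and $2\alpha_0$ (the latter contributing only when $m_{2\alpha_0}>0$), a direct pole-zero inspection yields exactly the two arithmetic progressions defining $\mathbf{Ex}$. The detailed verification of these locations is carried out in \cite{vBS87}, which we cite for the technicalities.
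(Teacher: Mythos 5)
The paper does not actually give a proof of Proposition~\ref{thm:poisson_trafo}: it is stated as a summary of results from the cited reference \cite{vBS87}, with no independent argument. Your proposal is an attempt to flesh out that citation. The first two tasks — that $P_\mu T$ lies in $\mathcal E^*_\mu(\mathbf{H}^n)$ and that $P_\mu$ is continuous into the inductive limit — are unobjectionable and indeed follow the standard lines. Likewise, the Helgason-conjecture route via the boundary value map and the relation $\beta_\mu^- \circ P_\mu = c(\mu)\,\mathrm{Id}$ is the mechanism underlying the van den Ban–Schlichtkrull theorem.

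However, your characterization of the exceptional set has a genuine error. You assert that $P_\mu$ is a topological isomorphism precisely when ``the relevant $c$-function quotient is neither zero nor infinite,'' which would make $\mathbf{Ex}$ the union of zeros and poles of $c$. This is not what the paper's $\mathbf{Ex}$ is, and not what is true. Writing $c$ via the Gindikin--Karpelevi\v{c} formula as in the paper,
\[
 c(\lambda\|\alpha_0\|) = c_0\frac{2^{-\lambda}\Gamma(\lambda)}{\Gamma\bigl(\tfrac{1}{2}(\tfrac{1}{2}m_{\alpha_0} + 1 +\lambda)\bigr)\,\Gamma\bigl(\tfrac{1}{2}(\tfrac{1}{2}m_{\alpha_0} + m_{2\alpha_0} + \lambda)\bigr)},
\]
the set $\mathbf{Ex}$ in \eqref{eq:exceptional_points} is exactly the zero set of $c$, coming from the poles of the denominator $\Gamma$-factors; the poles of $c$ (from $\Gamma(\lambda)$, i.e.\ $\mu\in-\N_0\alpha_0$) are not in $\mathbf{Ex}$ and $P_\mu$ is an isomorphism there. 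Concretely, for $\mathbf{H}^3_\R$ one has $\mathbf{Ex}=\{-1,-2,\dots\}\alpha_0$ while $c$ has a pole at $\mu=0$, which the paper explicitly treats later as a non-exceptional value (Theorem~\ref{thm:jordan} works at $\mu=0$ under the hypothesis $\mu\notin\mathbf{Ex}$). Your ``neither zero nor infinite'' test would incorrectly rule out $\mu=0$. A smaller inaccuracy: the Gindikin--Karpelevi\v{c} $\Gamma$-factor associated with $2\alpha_0$ does not disappear when $m_{2\alpha_0}=0$; it simply becomes $\Gamma\bigl(\tfrac12(\tfrac12 m_{\alpha_0}+\lambda)\bigr)$ and still contributes the second arithmetic progression. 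The correct logical step is: $P_\mu$ fails to be bijective on $\mathcal D'\to\mathcal E^*_\mu$ exactly at the zeros of $c$, which is the content you should extract from \cite{vBS87}, rather than a blanket pole-and-zero exclusion.
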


Note that $\mathcal E^*_\mu({\bf H})$ is invariant under the left regular representation. Moreover, if one considers the compact picture $\pi_{-\mu}^{\mathrm{cpt}}$ of the spherical principal series representation of 
$G$ associated with the spectral parameter $\mu$, then $\mathcal D'(\pl{\bf H})$ can be interpreted as the space of distribution vectors of 
$\pi_{-\mu}^{\mathrm{cpt}}$ (cf. \cite{vBS87}). As is well-known, the Poisson transformation $P_\mu$ intertwines these two representations.

For later reference we collect some of the spectral properties of $\Delta_{\bf M}$.

\begin{rem}\label{lem:lift_quantum}
 Let $\Gamma\subset G$ be a co-compact discrete subgroup. Then for all $\mu\in \a^*_\C$
 the pullback of smooth functions is a bijection
 \[
  (\til\pi_{\Gamma})^*: \tu{Eig}_{\Delta_{\bf M}}(\mu)\to{^\Gamma \mathcal E_\mu({\bf H})},
 \]
 where ${^\Gamma \mathcal E_\mu({\bf H})}$ denote the space of $\Gamma$-invariant  elements in $\mc{E}_\mu({\bf H})$. Consequently $^\Gamma \mathcal E_\mu({\bf H})$ is finite dimensional and $^\Gamma \mathcal E_\mu({\bf H})\neq 0$ only  holds  on a discrete set of values for $\mu\in \a^*_\C$ that fulfill $\|\rho\|^2-\|\mu\|^2\geq 0$. Furthermore,
\[
^\Gamma \mathcal E_\mu({\bf H})\subset \mathcal E_\mu^0({\bf H}) \subset \mathcal E^*_\mu({\bf H}).
\]
\end{rem}

\subsection{Correspondence of spectra and resonant states}
Let us now come to the proof of the first main result. Consider the canonical projection $\til{\pi}_0: S{\bf H}= G/M\to {\bf H}=G/K$ on the base of the fibration 
and similarly $\pi_0: S{\bf M}=\Gamma\backslash G/M\to {\bf M}=\Gamma\backslash G/K$. The pullback of smooth functions leads to a map $\pi_0^*: C_c^\infty({\bf M}) \to C_c^\infty(S{\bf M})$. Using the canonical measures on both spaces we can identify distributions as dual spaces of smooth compactly supported functions and by duality we obtain a map ${\pi_0}_*: \mathcal D'(S{\bf M}) \to \mathcal D'({\bf M})$. In the same way we obtain the associated pushforward on distributions $\til \pi_{0*}$ on the universal cover, which allows us to state the following useful expressions for the Poisson transformation.

\begin{prop}\label{prop:poisson_projection}
Let $\mu\in \a^*_\C$ be an arbitrary spectral parameter. Then we have the equalities $P_\mu = \til{\pi}_{0*} \circ \mathcal Q_{\mu-\rho, -}$, and
$P_\mu = \til{\pi}_{0*} \circ \mathcal Q_{\mu-\rho, +}$, respectively, as maps $\mathcal D'(\pl {\bf H}) \to \mathcal D'({\bf H})$.
\end{prop}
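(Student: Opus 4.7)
The plan is to reduce the identity to a pointwise computation for smooth $T\in C^\infty(\pl{\bf H}^n)$, which suffices by continuity of both sides and density of $C^\infty$ in $\mc{D}'(\pl{\bf H}^n)$. The main ingredient will be the classical Iwasawa change of variables that shifts the exponent from $(\mu-\rho)$ to $(\mu+\rho)$.

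First I unpack $\mc{Q}_{\mu-\rho,\pm}T$. Because $\mathfrak{a}^*_\C$ is one-dimensional and $\nu_0(H_0)=1$, one has $(\mu-\rho)(H_0)\,\nu_0=\mu-\rho$ in $\mathfrak{a}^*_\C$, so combining the definitions of $\Phi_\pm$ in \eqref{eq:def_Phi_pm} and of $\mc{Q}_{\lambda,\pm}$ in \eqref{eq:Q_lambda_def} yields
\[
\mc{Q}_{\mu-\rho,\pm}T(gM) = e^{(\mu-\rho)\langle gK,\,B_\pm(gM)\rangle}\,T(B_\pm(gM)).
\]
Pushing forward along $\tilde{\pi}_0$ amounts to integrating over the fiber $\{gkM:kM\in K/M\}$ with the normalized $K$-invariant measure $dkM$; since $gkK=gK$, this gives
\[
\tilde{\pi}_{0*}(\mc{Q}_{\mu-\rho,\pm}T)(gK)=\int_{K/M}e^{(\mu-\rho)\langle gK,\,B_\pm(gkM)\rangle}\,T(B_\pm(gkM))\,dkM.
\]

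Next I perform the substitution $\ell M:=B_\pm(gkM)\in K/M$. Using the descriptions $B_+(gM)=k(g)M$ and $B_-(gM)=k(gw_-)M$ coming from the Iwasawa decomposition $G=KAN$ in \eqref{eq:initial_end_point_map}, and the classical identity
\[
\int_{K/M}F(k(gk)M)\,dkM\;=\;\int_{K/M}F(\ell M)\,e^{2\rho\langle gK,\ell M\rangle}\,d\ell M,
\]
the Jacobian factor $e^{2\rho\langle gK,\ell M\rangle}$ combines with the weight $(\mu-\rho)\langle gK,\ell M\rangle$ already in the integrand to produce exactly $e^{(\mu+\rho)\langle gK,\ell M\rangle}=p_\mu(gK,\ell M)$. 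The "$-$" case reduces to the "$+$" case by the right-translation $k\mapsto kw_-^{-1}$ on $K$, which preserves $dkM$ because $w_-\in N_K(A)\subset K$ normalizes $M$. In both cases the resulting integral is $P_\mu T(gK)$, as claimed.

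A conceptually cleaner alternative is to observe that, by Proposition~\ref{prop:classical_compact_intertwining}, both $P_\mu$ and $\tilde{\pi}_{0*}\circ\mc{Q}_{\mu-\rho,\pm}$ are continuous $G$-equivariant maps from $(\mc{D}'(\pl{\bf H}^n),\pi^{\mathrm{cpt}}_{-\mu})$ to the left regular representation on $\mc{E}^*_\mu({\bf H}^n)$, so one need only verify equality on the $K$-fixed spherical vector $\mathbf{1}\in\mc{D}'(K/M)$ and extend by cyclicity; that check specializes to the same integral identity. The only real bookkeeping item is the correct $\rho$-shift under the Helgason change of variables on $K/M$; once that standard identity is applied, no further obstacle remains.
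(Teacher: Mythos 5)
Your proof is correct and follows essentially the same route as the paper: restrict to smooth boundary functions by density, unpack $\mathcal Q_{\mu-\rho,\pm}$ and the fiber integration formula $\tilde\pi_{0*}f(gK)=\int_{K/M}f(gkM)\,dkM$, and then apply the Helgason change-of-variables on $K/M$ (the paper invokes Lemma I.5.19 of Helgason), whose Jacobian $e^{2\rho\langle gK,\ell M\rangle}$ shifts the exponent from $\mu-\rho$ to $\mu+\rho$. Your handling of the $\mp$ case by right-translation $k\mapsto kw_-^{-1}$ is a slightly slicker way to express what the paper does by noting $B_-(gM)=k(gw_-)M$ and then running the same substitution.
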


\begin{proof}
 By a density argument we restrict our attention to the map for smooth functions $\phi\in C^\infty(K/M)$. Recall from Section~\ref{sec:rieman_sym_space} that the invariant 
 measures on $G/K$, $G/M$ and $K/M$ are normalized such that
 \[
  \int_{G/M}f(gM) \intd (gM) = \int_{G/K}\int_{K/M}  f(gkM) ~\intd (kM)\intd (gK).
 \]
For $f\in C^\infty_c(G/M)$ and $\phi\in C^\infty_c(G/K)$ we find
\begin{eqnarray*}
 \int_{G/K}\til\pi_{0*}f\cdot \phi ~\intd (gK)
 &=&\int_{G/M}f\cdot \til\pi_{0}^*\phi ~\intd (gM)\\
 &=&\int_{G/K}\int_{K/M}f(gkM)\, \til\pi_{0}^*\phi(gkM)~\intd (kM) \intd (gK)\\
 &=&\int_{G/K}\int_{K/M}f(gkM)\, \phi(gK)~\intd (kM) \intd (gK).
\end{eqnarray*}   
This implies 
\begin{equation}\label{eq:pi0pushforward}
\til\pi_{0*}f(gK)=\int_{K/M}f(gkM)~\intd (kM).
\end{equation}
For $\psi\in C^\infty(K/M)=C^\infty(G/P)$ we recall $\mathcal Q_{\mu-\rho,-}(\psi)\in C^\infty(G/M)$ from \eqref{eq:Q_lambda_def} and note that (we use the isomorphism $gP\mapsto k(g)M$ identifying $G/P$ with $K/M$, $k(g)$ being the $K$ element in the $KAN$-decomposition of $g$)
\begin{eqnarray*}
(\mathcal Q_{\mu-\rho,-}\psi)(gM)
&=&\Phi_-^{(\mu-\rho)(H_0)}(gM)\psi(gw_-P)\\
&=&(e^{\nu_0\beta( gK,B_-(gM))})^{(\mu-\rho)(H_0)}\psi(gw_-P)\\
&=&e^{(\mu-\rho)\beta( gK, k(gw_-)M)}\psi(k(gw_-)M).
\end{eqnarray*}

Now \eqref{eq:pi0pushforward} and \cite[Lemma I.5.19]{Hel84} allow us to compute
 \begin{align*}
[(\til{\pi}_{0*} \circ \mathcal Q_{\mu-\rho, -}) \psi](gK) &= \int_{K/M} e^{(\mu-\rho)\beta( gk'K, k(gk'w_-)M)} \psi(k(gk'w_-)M)~\intd (k'M)\\
&= \int_{K/M} e^{(\mu-\rho)\beta( gK, k(gk')M)} \psi(k(gk')M)~\intd (k'M)\\
&= \int_{K/M} e^{(\mu-\rho)\beta( gK, kM)} \psi(kM)e^{-2\rho(H(g^{-1}k))}~\intd (kM)\\
&=  \int_{K/M} e^{(\mu-\rho)\beta( gK, kM)} \psi(kM)e^{2\rho\beta( gK, kM)}~\intd (kM)\\ 
&= (\mathcal P_\mu\psi)(gK)
 \end{align*}
(the third line corresponds to a change of variable $k'\mapsto k:=k(gk')$ in $\pl {\bf H}=K/M$). 
The equality $\mathcal P_\mu = \til{\pi}_{0*} \circ \mathcal Q_{\mu-\rho, +}$
follows analogously.
\end{proof}

\begin{thm}\label{thm:intertwining}
 Let $G$ be of real rank $1$ and $\Gamma\subset G$ a discrete, torsion-free, co-compact subgroup, let ${\bf H}=G/K$ and ${\bf M}=\Gamma\backslash {\bf H}$. If $\mu \in \a^*_\C\setminus\mathbf{Ex}$  is a regular spectral parameter,  then the pushforward $\pi_{0*}:\mathcal D'(S{\bf M})\to\mathcal D'({\bf M})$ restricts to isomorphisms
 \begin{equation}\label{isompi0}
  {\pi_0}_*:\tu{Res}^0_{X}(\mu-\rho)\to \tu{Eig}_{\Delta_{\bf M}}(\mu)
 \quad
\text{and}
\quad
  {\pi_0}_*:\tu{Res}^0_{X^*}(\mu-\rho)\to \tu{Eig}_{\Delta_{\bf M}}(\mu).
 \end{equation}
 \end{thm}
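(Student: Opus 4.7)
The plan is to assemble the isomorphism as a composition of three pieces that have already been made $G$-equivariant: lifting to the universal cover, the boundary transform $\mathcal Q_{\mu-\rho,\pm}$, and the Poisson transform $P_\mu$. Concretely, I would set up the diagram
\[
\tu{Res}^0_{X}(\mu-\rho)\ \xrightarrow{\pi_\Gamma^*}\ {}^\Gamma\mc R_-(\mu-\rho)\ \xleftarrow{\ \mathcal Q_{\mu-\rho,-}\ }\ {}^\Gamma\mathcal D'(\pl{\bf H}^n)\ \xrightarrow{\ P_\mu\ }\ {}^\Gamma\mathcal E^*_\mu({\bf H}^n)\ \xleftarrow{\tilde\pi_\Gamma^*}\ \tu{Eig}_{\Delta_{\bf M}}(\mu),
\]
where the $\Gamma$-action on $\mathcal D'(\pl{\bf H}^n)$ is the twisted one $\pi^{\mathrm{cpt}}_{-\mu}(\gamma)$ (so that equivariance of $\mathcal Q_{\mu-\rho,-}$ and of $P_\mu$ make sense simultaneously).

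First I would observe that the two outer arrows are bijections: $\pi_\Gamma^*$ identifies $\tu{Res}^0_X(\mu-\rho)$ with ${}^\Gamma\mc R_-(\mu-\rho)$ by Remark~\ref{lem:firs_band_Gamma_inv_vectors}, and $\tilde\pi_\Gamma^*$ identifies $\tu{Eig}_{\Delta_{\bf M}}(\mu)$ with ${}^\Gamma\mathcal E_\mu({\bf H}^n)$ by Remark~\ref{lem:lift_quantum}. Since $\Gamma$ is cocompact, any $\Gamma$-invariant element of $\mathcal E_\mu({\bf H}^n)$ descends to a smooth function on the compact manifold $\mathbf M$, hence is bounded and therefore lies in $\mathcal E^0_\mu({\bf H}^n)\subset\mathcal E^*_\mu({\bf H}^n)$; conversely any $\Gamma$-invariant element of $\mathcal E^*_\mu({\bf H}^n)$ lies a fortiori in $\mathcal E_\mu({\bf H}^n)$. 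So ${}^\Gamma\mathcal E^*_\mu({\bf H}^n)={}^\Gamma\mathcal E_\mu({\bf H}^n)$ and the right-hand identification is unambiguous. The middle two arrows are $G$-equivariant topological isomorphisms by Proposition~\ref{prop:classical_compact_intertwining} and by Proposition~\ref{thm:poisson_trafo} (here is where the hypothesis $\mu\notin\mathbf{Ex}$ is used), and taking $\Gamma$-fixed vectors preserves isomorphisms. This gives a canonical linear isomorphism $\Psi_\mu:\tu{Res}^0_X(\mu-\rho)\to \tu{Eig}_{\Delta_{\bf M}}(\mu)$.

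The remaining task is to identify $\Psi_\mu$ with the pushforward $\pi_{0*}$. This is exactly the content of Proposition~\ref{prop:poisson_projection}: on the cover we have $P_\mu=\tilde\pi_{0*}\circ\mathcal Q_{\mu-\rho,-}$, so if $u\in\tu{Res}^0_X(\mu-\rho)$ lifts to $\tilde u=\mathcal Q_{\mu-\rho,-}(T)$ for the unique boundary distribution $T$, then $P_\mu T=\tilde\pi_{0*}\tilde u$. Now I need naturality of pushforward: the diagram
\[
\begin{array}{ccc}
\mathcal D'(S{\bf H}^n) & \xrightarrow{\tilde\pi_{0*}} & \mathcal D'({\bf H}^n)\\
\uparrow\pi_\Gamma^* & & \uparrow\tilde\pi_\Gamma^*\\
\mathcal D'(S{\bf M}) & \xrightarrow{\pi_{0*}} & \mathcal D'({\bf M})
\end{array}
\]
commutes on $\Gamma$-invariant distributions (this follows from the choice of normalization $\intd gM=\intd gK\otimes\intd kM$ made in Section~\ref{sec:rieman_sym_space}, which makes pushforward along $\pi_0$ compatible with pushforward along $\tilde\pi_0$ after unfolding a fundamental domain). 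Combining, $\tilde\pi_\Gamma^*(\pi_{0*}u)=P_\mu T$, which means $\pi_{0*}u$ corresponds precisely to $\Psi_\mu(u)$ under the identification $\tu{Eig}_{\Delta_{\bf M}}(\mu)\cong{}^\Gamma\mathcal E^*_\mu({\bf H}^n)$.

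The same argument with $\mathcal Q_{\mu-\rho,+}$ in place of $\mathcal Q_{\mu-\rho,-}$ and co-resonant states in place of resonant states handles the second isomorphism in \eqref{isompi0}, using the second equality of Proposition~\ref{prop:poisson_projection}. The main technical point, and the place where care is needed, is the compatibility of the twisted $\Gamma$-action on boundary distributions with the two untwisted $\Gamma$-actions on $\mc R_\pm(\mu-\rho)$ and on $\mathcal E^*_\mu({\bf H}^n)$; this compatibility is exactly what the equivariance statements in Proposition~\ref{prop:classical_compact_intertwining} and the known intertwining property of $P_\mu$ provide, so no new computation is needed. The only genuine input from rank-one hyperbolic geometry beyond these structural facts is the exclusion of $\mathbf{Ex}$, which is what guarantees surjectivity of $P_\mu$ and hence of $\pi_{0*}$.
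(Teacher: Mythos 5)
Your proof is correct and follows essentially the same route as the paper: lift to the cover, use the equivariant isomorphisms $\mathcal Q_{\mu-\rho,\pm}$ and $P_\mu$ on $\Gamma$-invariant vectors, and identify the composition with ${\pi_0}_*$ via Proposition~\ref{prop:poisson_projection} and the commuting diagram of pullbacks and pushforwards. The only difference is that you spell out the bookkeeping (the twisted $\Gamma$-action on boundary distributions and the equality ${}^\Gamma\mathcal E^*_\mu={}^\Gamma\mathcal E_\mu$) that the paper's very terse proof leaves implicit.
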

 
\begin{proof}
Note that  we have a bijection $(\til\pi_{\Gamma})^*: \tu{Eig}_{\Delta_{\bf M}}(\mu)\to{^\Gamma \mathcal E_\mu({\bf H})}$ (Remark~\ref{lem:lift_quantum}) 
as well as  bijections $(\pi_{\Gamma})^*:\tu{Res}_{X}^0(\lambda(H_0)) \to {^\Gamma \mc{R}_-(\lambda)}
$ and $(\pi_{\Gamma})^*:\tu{Res}_{X^*}^0(\lambda(H_0)) \to {^\Gamma \mc{R}_+(\lambda)}$ (Remark~\ref{lem:firs_band_Gamma_inv_vectors}). If we consider the projections on the  covers we have the following commuting diagram
\[ 
\begin{tikzcd} ^\Gamma\mathcal D'(S{\bf H}) \arrow{r}{\til{\pi}_{0*}}  & ^\Gamma\mathcal D'({\bf H}) \\
\mathcal D'(S{\bf M}) \arrow{u}{\pi_{\Gamma}^*}\arrow{r}{{\pi_0}_*} & \mathcal D'({\bf M})\arrow[swap]{u}{\til\pi_{\Gamma}^*}. 
\end{tikzcd} 
\]

We see that it is sufficient to prove that $\til{\pi}_{0*}: {^\Gamma \mc{R}_\pm(\mu-\rho)}\mapsto {^\Gamma \mathcal E_\mu({\bf H})}$
is an isomorphism. By Propositions~\ref{thm:poisson_trafo} and \ref{prop:classical_compact_intertwining}
we already know that $\mathcal P_\mu \circ(\mathcal Q_{\mu-\rho,\pm})^{-1}:
{^\Gamma \mc{R}_\pm(\mu-\rho)}\mapsto {^\Gamma \mathcal E_\mu({\bf H})}$ is an isomorphism and we conclude with Proposition~\ref{prop:poisson_projection}.
\end{proof}


We can also  give a precise description of first band Jordan blocks:  we write 
 \begin{align*}
\tu{Res}_{X}^0(\lambda, k):= \Big\{&u\in \mathcal D'(S{\bf M})\mid
(X+\lambda)^ku=0, U_-u=0, ~\forall U_- \in C^\infty(S{\bf M};E_u)\Big\}.
\end{align*}
\begin{thm}\label{thm:jordan}
If $\mu\in \a^*_\C\setminus \mathbf{Ex}$ is a regular spectral parameter and $\mu \neq0$,
 then there are no first band Jordan blocks with spectral parameter 
 $\lambda = -\rho+\mu$:
 \[
  \forall k\in \N^*~:~ \tu{Res}_{X}^0(\lambda, k)
  = \tu{Res}_{X}^0(\lambda, 1) .
 \]
 If $\mu=0$, then the first band Jordan block at  $\lambda = -\rho$ is 
 precisely of size two, i.e.
 \[\forall k\geq 2, \quad 
  \tu{Res}_{X}^0(\lambda, k)
  = \tu{Res}_{ X}^0(\lambda, 2)
\text{, and }
 \dim \tu{Res}_{X}^0(\lambda, 2) =2 \dim\tu{Res}_{X}^0(\lambda, 1).
 \]
\end{thm}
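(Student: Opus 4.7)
The plan is to transfer the Jordan-block question to a spectral question for $\Delta_{\bf M}$ on the compact manifold ${\bf M}$, using the boundary-distribution picture of Proposition~\ref{prop:ruell_jordan_bv} together with the Poisson-projection formula $\tilde\pi_{0*}\circ\mathcal Q_{\mu-\rho,-}=P_\mu$ of Proposition~\ref{prop:poisson_projection}. Given a first-band Jordan tuple $u_0,\dots,u_{k-1}\in\tu{Res}^0_X(\lambda,k)$ with $\lambda=\mu_0-\rho$, I lift to $\tilde u_l=\pi_\Gamma^*u_l$ on $G/M$ and extract boundary data $T_0,\dots,T_{k-1}$ via Proposition~\ref{prop:ruell_jordan_bv}. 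Inverting relation \eqref{eq:v_k_u_k} gives $\tilde u_l=\Phi_-^\lambda\sum_{j=0}^l\frac{(-\log\Phi_-)^{l-j}}{(l-j)!}B_-^*T_j$. Using the identity $\log\Phi_-\cdot\Phi_-^\lambda=\partial_\lambda\Phi_-^\lambda$ and Proposition~\ref{prop:poisson_projection}, the pushforwards $\tilde\phi_l:=\tilde\pi_{0*}\tilde u_l$ admit the explicit expression
\[
\tilde\phi_l\;=\;\sum_{j=0}^l\frac{(-1)^{l-j}}{(l-j)!}\,\partial_\mu^{l-j}P_\mu T_j\Big|_{\mu=\mu_0}.
\]

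Next I derive the governing recursion. Differentiating the identity $L_\mu P_\mu=0$ (with $L_\mu:=\Delta_{{\bf H}^n}-\rho^2+\mu^2$) by Leibniz and using $\partial_\mu L_\mu=2\mu$, $\partial_\mu^2 L_\mu=2$ (higher derivatives vanish), one gets $L_\mu\partial_\mu^nP_\mu=-2n\mu\,\partial_\mu^{n-1}P_\mu-n(n-1)\partial_\mu^{n-2}P_\mu$. Substituting into the formula for $\tilde\phi_l$ and reindexing yields the clean triangular system
\[
L_{\mu_0}\tilde\phi_l\;=\;2\mu_0\tilde\phi_{l-1}-\tilde\phi_{l-2},\qquad \tilde\phi_{-1}=\tilde\phi_{-2}=0,
\]
which descends to ${\bf M}$ with $\Delta_{\bf M}$ in place of $\Delta_{{\bf H}^n}$ and $\phi_l:=\pi_{0*}u_l$. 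Since $\tu{Eig}_{\Delta_{\bf M}}(\mu_0)\neq 0$ forces $\mu_0^2\in\R$, the operator $L:=\Delta_{\bf M}-\rho^2+\mu_0^2$ is self-adjoint on the compact manifold ${\bf M}$, so $\ker L\perp\operatorname{range}(L)$; moreover $\phi_0\in\ker L$ by Theorem~\ref{thm:intertwining}.

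The upper bounds then follow. For $\mu_0\neq 0$, the relation $L\phi_1=2\mu_0\phi_0$ puts $\phi_0\in\operatorname{range}(L)\cap\ker L=\{0\}$, and the injectivity part of Theorem~\ref{thm:intertwining} gives $u_0=0$, contradicting the non-triviality of the Jordan block; hence $\tu{Res}^0_X(\lambda,k)=\tu{Res}^0_X(\lambda,1)$ for all $k$. For $\mu_0=0$, the relation $L\phi_1=0$ is vacuous, but at the next level $L\phi_2=-\phi_0$ forces $\phi_0=0$ by the same argument, so $\tu{Res}^0_X(-\rho,k)=\tu{Res}^0_X(-\rho,2)$ for all $k\geq 2$.

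It remains to produce a Jordan partner of size $2$ for every $u_0$ at $\mu_0=0$. Here I exhibit an explicit candidate: using $X\Phi_-=-\Phi_-$ and $X(\log\Phi_-)=-1$, a direct computation shows that for any $T_1\in\mathcal D'(K/M)$ the distribution
\[
\tilde u_1\;:=\;-\Phi_-^{-\rho}\log\Phi_-\cdot B_-^*T_0+\Phi_-^{-\rho}B_-^*T_1
\]
satisfies $(X-\rho)\tilde u_1=\tilde u_0$ and $U_-\tilde u_1=0$; its $\Gamma$-invariance reduces, via the transformation rule \eqref{eq:Phi_pm_trafo}, to the cocycle equation $\gamma^*T_1=N_\gamma^\rho(T_1+\log N_\gamma\cdot T_0)$ for all $\gamma\in\Gamma$. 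Solvability of this cocycle equation—\emph{the main obstacle of the proof}—is established by setting $T_1:=-\partial_\mu T(\mu)|_{\mu=0}$ for a holomorphic family $T(\mu)\in{}^\Gamma(H^{-\infty}_{-\mu})$ extending $T_0$, whose existence follows from analytic Fredholm theory applied to the family of $\Gamma$-invariant vectors in the principal series $\pi^{\mathrm{cpt}}_{-\mu}$ near the non-exceptional point $0\notin\mathbf{Ex}$, combined with the Weyl-group intertwiner at $\mu=0$ that produces the expected doubling. Together with the upper bound and the exact sequence $0\to\tu{Res}^0_X(-\rho,1)\to\tu{Res}^0_X(-\rho,2)\xrightarrow{X-\rho}\tu{Res}^0_X(-\rho,1)$, this yields $\dim\tu{Res}^0_X(-\rho,2)=2\dim\tu{Res}^0_X(-\rho,1)$.
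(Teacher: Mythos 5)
Your upper-bound argument is correct and is essentially the same strategy as the paper's, but packaged more systematically: you inverted \eqref{eq:v_k_u_k} to express $\tilde\phi_l$ as a finite sum of $\partial_\mu$-derivatives of $P_\mu$ applied to the boundary data, and then obtained the clean triangular recursion $L_{\mu_0}\tilde\phi_l = 2\mu_0\tilde\phi_{l-1} - \tilde\phi_{l-2}$ by differentiating $L_\mu P_\mu = 0$. The paper only works out the levels $l=1$ (for $\mu\neq 0$) and $l=2$ (for $\mu=0$) by hand, but the mechanism — descend to $\mathbf M$, use $\ker L\perp\operatorname{range}(L)$, and conclude via the injectivity direction of Theorem~\ref{thm:intertwining} — is identical.

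The final step, the \emph{existence} of a size-two Jordan partner at $\mu=0$, has a genuine gap. You propose to take $T_1 := -\partial_\mu T(\mu)|_{\mu=0}$ for a ``holomorphic family $T(\mu)\in{}^\Gamma(H^{-\infty}_{-\mu})$ extending $T_0$.'' No such family exists. By Proposition~\ref{prop:ruelle_gamma_inv_distr} combined with Remark~\ref{lem:lift_quantum}, the space ${}^\Gamma(H^{-\infty}_{-\mu})$ is isomorphic to $\tu{Eig}_{\Delta_{\bf M}}(\mu)$ (away from $\mathbf{Ex}$), which is nonzero only on a \emph{discrete} set of $\mu$; a holomorphic family of nonzero $\Gamma$-invariant vectors would force this set to have an accumulation point, a contradiction. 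Analytic Fredholm theory gives a meromorphic resolvent with finite-rank poles, not a holomorphic family of solutions through a pole. The paper sidesteps this entirely: it introduces the scattering operator $S_\mu := P_\mu^{-1}\circ P_{-\mu}$ on $\mathcal D'(\pl{\bf H}^n)$ (defined independently of $\Gamma$, hence holomorphic near $\mu=0$ with $S_0=\operatorname{Id}$), differentiates the intertwining relation $S_\mu\pi_\mu^{\mathrm{cpt}} = \pi_{-\mu}^{\mathrm{cpt}}S_\mu$ at $\mu=0$, and sets $T_1 := -\tfrac12(\partial_\mu S_\mu)_{|\mu=0}T_0$. One then checks directly that $T_1$ satisfies the cocycle equation $\gamma^*T_1 = N_\gamma^\rho(T_1 + \log N_\gamma\, T_0)$, using only the $\Gamma$-invariance of $T_0$. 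Your passing mention of the ``Weyl-group intertwiner'' points in the right direction — $S_\mu$ is exactly that — but the concrete differentiation must be applied to $S_\mu$ as an operator, not to a (nonexistent) holomorphic family of $\Gamma$-invariant vectors.
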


\begin{proof}
 Let $\mu\in \a^*_\C$, $\mu\notin \mathbf{Ex}\cup\{0\}$. In order to simplify  the notation, let us identify throughout this proof $\a^*_\C\cong \C$ by identifying
 $\nu_0$ with $1$. Assume that there is a non-trivial Jordan block for  $\lambda =-\rho+\mu$. Then there are nonzero distributions  $u_0, u_1\in \mathcal D'(S{\bf M})$ with  $(X +\lambda)u_0 =0$, $(X +\lambda)u_1=u_0$
 and $U_- u_{0}=U_-u_1=0$ for all smooth sections $U_-$ in $\til{E}_u$.  From Theorem~\ref{thm:intertwining} we know that $\phi_0:=\pi_{0*}( u_0)\in C^\infty({\bf M})$ is a nonzero element that fulfills $(\Delta_{\bf M} - \rho^2+\mu^2)\phi_0 =0$. We claim that by setting $\phi_1:= \pi_{0*}(u_1)$ we obtain
 \[
  (\Delta_{\bf M} - \rho^2+\mu^2)\phi_1 =2\mu\phi_0.
 \]
If this holds, then pairing this equation with $\phi_0$, for $\mu\not=0$ we get $2\mu||\phi_0||_{L^2}^2=0$ so that $\phi_0=0$, which is a contradiction.

Now let us prove the claim: let $\til u_0, \til u_1\in \mathcal D'(S{\bf M})$ and $\til \phi_0, \til \phi_1 \in C^\infty ({\bf H})$ be the lifts of $\phi_i$ to the universal cover. By Proposition~\ref{prop:ruell_jordan_bv} and its proof we can write $\til u_1 = \Phi_-^{\mu-\rho}(B_-^*T_1 - \log \Phi_- B_-^*T_0)$ with $T_0, T_1\in \mathcal D'(\pl {\bf H})$. Furthermore, by Proposition~\ref{prop:poisson_projection} we can write $\partial_\mu P_\mu(T_0) = \til \pi_{0*}(\Phi_-^{\mu-\rho}\log\Phi_- B_-^*T_0)$. Thus we get $\til \phi_1 = \mathcal P_\mu (T_1) - \partial_\mu \mathcal P_\mu(T_0)$ and
\begin{equation}
 \label{eq:laplace_jordan_1}
 (\Delta_{{\bf H}} - \rho^2+\mu^2)\til{\phi}_1 =
 -(\Delta_{{\bf H}} - \rho^2+\mu^2)\partial_\mu\mathcal P_\mu(T_0).
\end{equation}
Now it is easily checked that 
\begin{equation}
 \label{eq:laplace_jordan_2}
2\mu \mathcal P_\mu(T_0) +(\Delta_{{\bf H}} -\rho^2+\mu^2)\partial_\mu\mathcal P_\mu(T_0) =0,
\end{equation}
by taking the derivate of the equation $(\Delta_{{\bf H}} -\rho^2+\mu^2)\mathcal P_\mu(T_0)=0$ with respect to $\mu$. Thus, \eqref{eq:laplace_jordan_1} and \eqref{eq:laplace_jordan_2} imply the above claim.

Let us finally consider the case $\mu = 0$ and assume there is a Jordan block of size larger than two. Then there are nonzero distributions $u_0,u_1,u_2 \in \mathcal D'(S{\bf M})$ such that
\[
 (X -\rho)u_2= u_1,~~(X -\rho)u_1= u_0,~~(X -\rho)u_0= 0
\]
and $U_- u_{i}=0$ for all smooth sections $U_-$ in $\til{E}_u$. We set for $i=1,2,3$, $\phi_i = \pi_{0*}(u_i)\in \mathcal D'({\bf M})$ and claim that
\begin{equation}
 \label{eq:jordan_claim_mu_0}
 (\Delta_{\bf M} -\rho^2)\phi_2= -\phi_0.
\end{equation}
As above, pairing this equation with $\phi_0$ we get a direct contradiction. In order to prove this claim, let $\til u_0, \til u_1,\til u_2 \in \mathcal D'(S{\bf H})$ and $\til \phi_0, \til \phi_1, \til \phi_2 \in \mathcal D' ({\bf H})$ be the lifts to the covers. By Proposition~\ref{prop:ruell_jordan_bv} and its proof we can write
\[
\til u_2 = \Phi_-^{-\rho}\left(B_-^*T_2 - (\log \Phi_-) B_-^*T_1+ \frac{1}{2} (\log \Phi_-)^2B_-^*T_0\right)
\]
with $T_0, T_1, T_2\in \mathcal D'(\pl{\bf H})$.

Using again the representation of the Poisson transformation from Proposition~\ref{prop:poisson_projection} we get
\begin{equation}
 \label{eq:jordan_mu_0_a}
\til \phi_2 = \mathcal P_\mu (T_2) - (\partial_\mu \mathcal P_\mu)_{|\mu=0}(T_1) +\frac{1}{2}
(\partial_\mu^2 \mathcal P_\mu)_{|\mu=0}(T_0).
\end{equation}
Now taking the second derivative of $(\Delta_{\bf H} - \rho^2+\mu^2)\mathcal P_\mu(T_0)=0$ at $\mu=0$ we get
\begin{equation}\label{eq:jordan_mu_0_b}
2\mathcal P_0(T_0)  + (\Delta_{{\bf H}} - \rho^2) (\partial_\mu^2\mathcal P_\mu)_{|\mu=0}(T_0)=0. 
\end{equation}
Finally \eqref{eq:jordan_mu_0_a}, \eqref{eq:laplace_jordan_2} and \eqref{eq:jordan_mu_0_b}
together imply \eqref{eq:jordan_claim_mu_0}. This shows that the first band Jordan blocks of $X$ at $\lambda=-\rho$ are maximally of size two.

Let us finally show that at $\mu=0$ the Jordan blocks are at least of size two. To this end we assume that $\tu{Res}_{X}^0(-\rho)$ is nonzero and $u_0\in \tu{Res}_{X}^0(-\rho)\setminus\{0\}$. By Proposition~\ref{prop:classical_compact_intertwining} we know that for $\til u_0 \in \mc{R}_-(-\rho)$ there exists a classical boundary value $T_0\in \mathcal D'(\pl {\bf H})$ such that $\til u_0 = \mathcal Q_{-\rho, -} (T_0)$. From the same proposition we obtain $\pi_0^\tu {cpt}(\gamma) T_0 =T_0$ for all $\gamma\in \Gamma$. For
$\mu\notin \a^*_\C\setminus \mathbf{Ex}$ we define the scattering operator for ${\bf H}$
\begin{equation}\label{eq:scattering_operator}
S_\mu:\Abb{\mathcal D'(\pl{\bf H})}{\mathcal D'(\pl{\bf H})}{T}{\mathcal P_{\mu}^{-1}\circ \mathcal P_{-\mu}(T)}.
\end{equation}
Note that by Proposition~\ref{thm:poisson_trafo} this operator is well defined and we have
\begin{equation}\label{eq:scattering_intertwining}
S_\mu\pi_\mu^\tu{cpt}(g) = \pi_{-\mu}^\tu{cpt}(g) S_\mu \tu{ for all }g\in G.
\end{equation}
Obviously, we have $S_0 = \tu{Id}$. Taking the derivative of \eqref{eq:scattering_intertwining}
with respect to $\mu$ at $\mu=0$, we obtain
\begin{equation}
 \label{eq:scattering_intertwining_2}
  \pi_0^\tu{cpt}(g) (\partial_\mu S_\mu)_{|\mu=0} - (\partial_\mu S_\mu)_{|\mu=0} \pi_0^\tu{cpt}(g) = -2(\partial_\mu \pi_{-\mu}^\tu{cpt}(g))_{|\mu=0} \tu{ for all }g\in G.
\end{equation}
If we now set $T_1 :=-\tfrac{1}{2}(\partial_\mu S_\mu)_{|\mu=0}T_0 \in \mathcal D'(\pl{\bf H})$, then
\eqref{eq:scattering_intertwining_2}, together with the transformation properties of $T_0$, yields
\[
 \pi_0^\tu{cpt}(\gamma)T_1-T_1 = (\partial_\mu \pi^\tu{cpt}_{-\mu}(\gamma))_{|\mu=0}T_0) \tu{ for all } \gamma\in \Gamma.
\]
With \eqref{eq:compact_picture_N_gamma} this implies
\[
 \gamma^* T_1 = N_\gamma^\rho(T_1+\log N_\gamma T_0) \tu{ for all } \gamma\in \Gamma.
\]
Note that Proposition~\ref{prop:ruell_jordan_bv} implies that the existence of such a pair of distributions $T_0, T_1\in \mathcal D'(\pl{\bf H})$ is equivalent to the existence of $u_1\in \mathcal D'(S{\bf M})$ with $(X -\rho)u_1=u_0$. We have thus constructed for any $u_0\in \tu{Res}^0_{X}(-\rho)$ a Jordan block of size two. This finishes the proof of Theorem~\ref{thm:jordan}.
\end{proof}

For the special case of a real hyperbolic space a description of Jordan blocks is given in \cite{DFG15}. The proof there however relies on the pairing formula and the self adjointness of  the Laplacian. Here we have given a different proof, more in the spirit of
\cite{GHW18}, that also allows the precise description of the spectral value at $\lambda=-\rho$ which was untractable with the methods in \cite{DFG15}. For $G={\rm SL}_2(\rr)$, it is also shown 
in \cite{Co05,FF03} that $-\rho$ has a Jordan block.

Let us finally give a rough description of the first band resonant states at the exceptional points.
At these points the Poisson transformation is not injective anymore, but one has a nontrivial, closed $G$-invariant subspace $\ker P_\mu \subset \mathcal D'(\pl {\bf H})\cong H_\mu^{-\infty}$ as well as a $G$-invariant
subspace ${\rm Im}\, P_\mu \subset \mathcal E_\mu^*({\bf H})$. In particular, the existence of the kernel implies that at these exceptional points there could be more Ruelle resonant states in the first band than expected from the Laplace spectrum.

\begin{prop}
 Let $\mu\in \mathbf{Ex}\subset \a^*_\C$ be an exceptional spectral parameter. Then
 \begin{equation}
  \label{eq:exceptional_1}
  \dim \tu{Res}^0_{X}(\mu-\rho) = \dim \tu{Res}^0_{X^*}(\mu-\rho)
  = \dim (^\Gamma\ker P_\mu) + \dim (^\Gamma{\rm Im}\,P_\mu).
 \end{equation}
 In particular, if $\|\mu\| > \|\rho\|$,  then we have
 \begin{equation}
  \label{eq:exceptional_2}
   \dim \tu{Res}^0_{X}(\mu-\rho) = \dim \tu{Res}^0_{X^*}(\mu-\rho)
  = \dim (^\Gamma\ker P_\mu).
 \end{equation}
If $\mu=-\rho$, then we get $ \dim \tu{Res}^0_{X}(\mu-\rho) = \dim \tu{Res}^0_{X^*}(\mu-\rho)  = \dim (^\Gamma\ker P_\mu)+1$.
\end{prop}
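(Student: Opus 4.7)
The first equality $\dim\tu{Res}^0_X(\mu-\rho)=\dim\tu{Res}^0_{X^*}(\mu-\rho)$ follows immediately from Proposition~\ref{prop:ruelle_gamma_inv_distr}, which identifies both sides with $\,^\Gamma H^{-\infty}_{-\mu}$. For the second equality, the plan is to apply rank--nullity to the Poisson transform restricted to $\Gamma$-invariants. By Proposition~\ref{prop:poisson_projection}, under the isomorphism $\mathcal{Q}_{\mu-\rho,-}$ the pushforward $\pi_{0*}$ corresponds to $P_\mu$ acting on $\,^\Gamma H^{-\infty}_{-\mu}$. Since $\ker P_\mu$ is $G$-invariant, the kernel of $P_\mu|_{\,^\Gamma H^{-\infty}_{-\mu}}$ is exactly $\,^\Gamma\ker P_\mu$, so rank--nullity yields
\[
\dim\tu{Res}^0_X(\mu-\rho)=\dim\,^\Gamma\ker P_\mu+\dim P_\mu(\,^\Gamma H^{-\infty}_{-\mu}).
\]

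It then remains to identify $P_\mu(\,^\Gamma H^{-\infty}_{-\mu})$ with $\,^\Gamma\mathrm{Im}\,P_\mu$. The inclusion $\subset$ is automatic from $G$-equivariance of $P_\mu$. The reverse inclusion is the delicate point: given a $\Gamma$-invariant element $\psi\in\mathrm{Im}\,P_\mu$, one must produce a $\Gamma$-invariant preimage $T\in H^{-\infty}_{-\mu}$. This is a cohomological lifting problem for the $G$-equivariant short exact sequence $0\to\ker P_\mu\to H^{-\infty}_{-\mu}\to\mathrm{Im}\,P_\mu\to 0$, whose obstruction lies in $H^1(\Gamma,\ker P_\mu)$; I expect the verification that the associated connecting map vanishes to be the main technical obstacle in the general exceptional case.

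Fortunately the two explicit subcases of the proposition can be handled without invoking this general surjectivity, by computing $\,^\Gamma\mathrm{Im}\,P_\mu$ directly. Remark~\ref{lem:lift_quantum} gives the inclusion $\,^\Gamma\mathrm{Im}\,P_\mu\subset\,^\Gamma\mathcal E^*_\mu(\mathbf{H}^n)\cong\tu{Eig}_{\Delta_\mathbf{M}}(\mu)$. When $\|\mu\|>\|\rho\|$, the parametrization of $\mathbf{Ex}$ forces $\mu$ real with $|\mu|>\rho$, so $\rho^2-\mu^2<0$ lies outside the non-negative spectrum of the self-adjoint $\Delta_\mathbf{M}$ on the compact manifold $\mathbf{M}$; hence $\tu{Eig}_{\Delta_\mathbf{M}}(\mu)=0$, so $\,^\Gamma\mathrm{Im}\,P_\mu=0$ and the displayed identity collapses to $\dim\tu{Res}^0_X(\mu-\rho)=\dim\,^\Gamma\ker P_\mu$. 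When $\mu=-\rho$, compactness and connectedness of $\mathbf{M}$ give $\tu{Eig}_{\Delta_\mathbf{M}}(-\rho)=\ker\Delta_\mathbf{M}=\mathbb{C}\cdot\mathbf{1}$; the constant function $\mathbf{1}$ lies in $P_{-\rho}(\,^\Gamma H^{-\infty}_\rho)$ because it is the Poisson transform of the Patterson--Sullivan density at critical exponent $\rho$, which is $\Gamma$-invariant in $H^{-\infty}_\rho$ by the cocompactness of $\Gamma$. Hence $\dim\,^\Gamma\mathrm{Im}\,P_{-\rho}=1$ and the formula yields the stated $+1$.
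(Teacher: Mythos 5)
Your rank--nullity approach is essentially the same as the paper's: both decompose $^\Gamma(H^{-\infty}_{-\mu})$ into $^\Gamma(\ker P_\mu)$ plus a complement $C$ and identify $\dim C$ with $\dim {}^\Gamma{\rm Im}\,P_\mu$. You are in fact more careful than the paper's one-line argument, which simply asserts ``$P_\mu\colon C\to {}^\Gamma{\rm Im}\,P_\mu$ is a bijection'': this is automatic as an \emph{injection}, and the inclusion $P_\mu({}^\Gamma H^{-\infty}_{-\mu})\subset ({\rm Im}\,P_\mu)^\Gamma$ is clear from $G$-equivariance, but the reverse inclusion is precisely the $\Gamma$-cohomological lifting problem you flag, and the paper does not address it.

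The genuine gap in your proposal is that you only establish \eqref{eq:exceptional_1} in the two displayed subcases, and these do \emph{not} exhaust $\mathbf{Ex}$ in general. The paper's closing remark that $\|\mu\|\geq \|\rho\|$ for $\mu\in\mathbf{Ex}$ holds only for real and complex hyperbolic spaces; for $m_{2\alpha_0}\geq 3$ (quaternionic hyperbolic and the Cayley plane), the first family in \eqref{eq:exceptional_points} produces exceptional parameters with $0<\|\mu\|<\|\rho\|$ and $\mu\neq -\rho$. For such $\mu$, one has $\rho^2-\mu^2>0$, which may or may not be an eigenvalue of $\Delta_{\mathbf M}$. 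If it is not, then $({\rm Im}\,P_\mu)^\Gamma\subset {}^\Gamma\mathcal E^*_\mu\cong\tu{Eig}_{\Delta_{\mathbf M}}(\mu)=0$ and the argument closes as in your first subcase; but if it is an eigenvalue, your reduction to the vanishing of the connecting map in $H^1(\Gamma,\ker P_\mu)$ remains unresolved, and \eqref{eq:exceptional_1} is left unproven. A complete proof along these lines would need either a representation-theoretic argument that the short exact sequence $0\to\ker P_\mu\to H^{-\infty}_{-\mu}\to{\rm Im}\,P_\mu\to 0$ splits $\Gamma$-equivariantly, or an independent identification of $\tu{Res}^0_X(\mu-\rho)$ for these parameters.
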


\begin{proof}
The statement follows directly from Proposition~\ref{prop:ruelle_gamma_inv_distr}. If we write the finite dimensional vector space $^\Gamma(H^{-\infty}_{\mu}) = {^\Gamma(\ker P_\mu)}\oplus C$ with an arbitrary linear complement $C\subset{} ^\Gamma(H^{-\infty}_{\mu})$, then $P_\mu: C\to{^\Gamma {\rm Im}\,P_\mu}$ is a bijection and we get  (\ref{eq:exceptional_1}).
Furthermore, if $\|\mu\| > \|\rho\|$,
then by positivity of $\Delta_{\bf M}$ we know that
$^\Gamma \mathcal E^*_\mu({\bf H}) \cong \tu{Eig}_{\Delta_{\bf M}}(\mu) = 0$, thus
$\dim (^\Gamma {\rm Im}\,P_\mu)=0$ and we obtain (\ref{eq:exceptional_2}). If $\mu=-\rho$, then ${\rm Im}\,P_\mu=\cc$.
\end{proof}

Note that for real and complex hyperbolic spaces,  $\|\mu\|\geq \|\rho\|$ if $\mu\in \mathbf{Ex}$.

\section{A new description of Patterson-Sullivan distributions}\label{PatSul}

We briefly recall the construction of Patterson-Sullivan distributions from \cite{AZ07,HS09,HHS12}.
For $\mu,\mu'\in \a^*_\C$ we introduce a weighted Radon transformation $\mathcal R_{\mu,\mu'}$ by  
\[
 (\mathcal R_{\mu,\mu'} f)(g):=\int_A e^{(\mu+\rho)H(ga) + (\mu'+\rho)H(gaw)}f(ga) \intd a
\]
for each $f\in C_c^\infty(G/M)$.
By definition this Radon transformation is right $MA$-invariant and we have
\cite[Lemma 4.4]{HHS12}
\[
 \mathcal R_{\mu,\mu'}:C_c^\infty(G/M)\to C_c^\infty(G/MA).
\]
The Patterson-Sullivan distributions are then defined as follows:

\begin{Def}
\label{def:PS}
 Let $\mu,\mu'\in \a_{\cc}^*$ and $\varphi_\mu\in\mathcal E^*_\mu(G/K)$
 $\varphi_{\mu'}\in\mathcal E^*_{\mu'}(G/K)$. Then the associated \emph{Patterson-Sullivan distribution}
 $\operatorname{PS}_{\varphi_\mu,\varphi_{\mu'}} \in \mc{D}'(G/M)$ is the generalized density
 defined by its evaluation at $f\in C_c^\infty(G/M)$:
 \[
  \operatorname{PS}_{\varphi_\mu,\varphi_{\mu'}}(f):= \int\limits_{G/MA}(\mathcal R_{\mu,\mu'} f)(gMA)
[P_\mu^{-1}(\varphi_\mu)](\intd b)\otimes [P_{\mu'}^{-1}(\overline{\varphi_{\mu'}})](\intd b').
 \]
Here $[P_\mu^{-1}(\varphi_\mu)](\intd b)$ and
$[P_{\mu'}^{-1}(\overline{\varphi_{\mu'}})](\intd b')$ are the generalized densities
on $\pl{\bf H}=K/M=G/P$ obtained by the boundary distributions and the invariant measure. Their tensor
product is a generalized density on $\pl{\bf H}\times \pl{\bf H}$ and can be
restricted to the open subset $(\pl{\bf H})^2_{\Delta}:= (\pl{\bf H}\times \pl{\bf H})\setminus \Delta(\pl{\bf H})$, where $\Delta(\pl{\bf H})$ is the diagonal in $\pl{\bf H}\times \pl{\bf H}$. Note that $G$ acts transitively on $(\pl{\bf H})_\Delta^2$ with respect to the diagonal action and that 
$(\pl{\bf H})_\Delta^2\cong G/MA$ as a $G$-homogeneous space.
\end{Def}
 
Let us denote by $\mc{I}_+(\mu)$ and $\mc{I}_-(\mu)$ the respective inverse maps of 
the isomorphisms \eqref{isompi0}, defined on $\Eig_{\Delta_{\bf M}}(\mu)$ by 
\[\mc{I}_+(\mu):=(\pi_\Gamma^*)^{-1}\mathcal Q_{\mu-\rho,+}\circ(P_{\mu})^{-1}\til\pi_\Gamma^*,\quad  
\mc{I}_-(\mu):=(\pi_\Gamma^*)^{-1}\mathcal Q_{\mu-\rho,-}\circ(P_{\mu})^{-1}\til\pi_\Gamma^*.\]
\begin{thm}\label{thm:patterson_sullivan}
 Let $\mu,\mu'\in \a^*_\C\setminus\mathbf{Ex}$ and $\varphi_\mu \in \Eig_{\Delta_{\bf M}}(\mu)$ and
 $\varphi_{\mu'} \in \Eig_{\Delta_{\bf M}}(\mu')$. Then the Patterson-Sullivan
 distribution $ \operatorname{PS}_{\varphi_\mu,\varphi_{\mu'}}$ descends to 
 $S{\bf M}=\Gamma\backslash G/M$ and is given by
 \begin{equation}
  \label{eq:PS_identification}
  \operatorname{PS}_{\varphi_\mu,\varphi_{\mu'}} = \mathcal I_+(\mu)( \varphi_{\mu})\cdot \mathcal I_-(\mu')(\overline{\varphi_{\mu'}}).
 \end{equation}
 Here the product of the distributions $\mathcal I_+(\mu)(\varphi_\mu)$ and $\mathcal I_-(\mu')(\overline{\varphi_{\mu'}})$ is well defined by the wavefront condition. The descended Patterson-Sullivan distribution will be  denoted by $\operatorname{PS}^\Gamma_{\varphi_\mu,\varphi_{\mu'}}$.
\end{thm}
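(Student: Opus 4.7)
The plan is to verify (\ref{eq:PS_identification}) by lifting both sides to the universal cover $G/M$, checking the identity there, and then using $\Gamma$-invariance to descend to $S{\bf M}$ (this simultaneously proves that $\operatorname{PS}_{\varphi_\mu,\varphi_{\mu'}}$ is itself well-defined on $S{\bf M}$). First I would set $\tilde\varphi_\mu:=\tilde\pi_\Gamma^*\varphi_\mu$, $\tilde\varphi_{\mu'}:=\tilde\pi_\Gamma^*\varphi_{\mu'}$ and define
\[
T_+:=\mathcal P_\mu^{-1}(\tilde\varphi_\mu),\qquad T_-:=\mathcal P_{\mu'}^{-1}(\overline{\tilde\varphi_{\mu'}})\in \mathcal D'(\pl{\bf H}^n),
\]
which exist since $\mu,\mu'\notin\mathbf{Ex}$ (Proposition~\ref{thm:poisson_trafo}). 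Proposition~\ref{prop:classical_compact_intertwining} combined with (\ref{eq:compact_picture_N_gamma}) translates the $\Gamma$-invariance of the $\tilde\varphi$ into concrete $\Gamma$-equivariance of $T_\pm$ as elements of the principal series. By the very definition of $\mathcal I_\pm$ the lift to $G/M$ of the right-hand side of (\ref{eq:PS_identification}) equals the distributional product
\[
\tilde u:=\mathcal Q_{\mu-\rho,+}(T_+)\cdot\mathcal Q_{\mu'-\rho,-}(T_-)=\Phi_+^{\mu-\rho}\Phi_-^{\mu'-\rho}(B_+^*T_+)(B_-^*T_-),
\]
whose two factors have disjoint wavefront sets contained in $E_s^*$ and $E_u^*$ respectively, so the product is well-defined.

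The key geometric observation driving the proof is that the pair of endpoint maps assembles into a smooth fibration
\[
(B_+,B_-):G/M\longrightarrow(\pl{\bf H}^n)^2_\Delta\cong G/MA,\qquad gM\mapsto(gw_+M,gw_-M),
\]
whose fibres are right $A$-cosets, i.e.\ the orbits of the geodesic flow. For a test function $f\in C_c^\infty(G/M)$, I would compute $\langle\tilde u,f\rangle$ against the Liouville density by first pushing forward along $(B_+,B_-)$. Using the decomposition $\intd\mu_{\tu{L}}=\intd gM=\intd a\otimes\intd gMA$ this reorganizes the pairing as
\[
\langle\tilde u,f\rangle=\Big\langle T_+\otimes T_-\big|_{(\pl{\bf H}^n)^2_\Delta},\;F_f\Big\rangle,\qquad F_f(gMA):=\int_A\Phi_+(gaM)^{\mu-\rho}\Phi_-(gaM)^{\mu'-\rho}f(gaM)\,\intd a.
\]

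The core computation is to identify $F_f$ with the weighted Radon transform $\mathcal R_{\mu,\mu'}f$ from Definition~\ref{def:PS}. Using the left $K$-invariance $H(kg)=H(g)$ and the Iwasawa decomposition one checks $\langle gaK,B_+(gaM)\rangle=H(ga)$, and, after conjugating by the nontrivial Weyl representative $w_-$ (which acts as $-\operatorname{id}$ on $\mathfrak a$), $\langle gaK,B_-(gaM)\rangle=H(gaw_-)$; hence
\[
\Phi_+(gaM)^{\mu-\rho}\Phi_-(gaM)^{\mu'-\rho}=e^{(\mu-\rho)H(ga)+(\mu'-\rho)H(gaw_-)}.
\]
The discrepancy between these exponents and those of $\mathcal R_{\mu,\mu'}$ is precisely the Harish-Chandra Jacobian $\intd(g\cdot b)/\intd b=e^{-2\rho H(g^{-1}k)}$ of the $G$-action on $K/M$, which arises when expressing $T_+\otimes T_-$ as a density against the $G$-invariant measure on $G/MA$ rather than against the product $K$-invariant density $\intd b\otimes\intd b'$ on $(\pl{\bf H}^n)^2_\Delta$. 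This accounts exactly for the shift to $\mu+\rho$ and $\mu'+\rho$ in the two exponents, giving $F_f(gMA)=(\mathcal R_{\mu,\mu'}f)(gMA)$ and therefore $\langle\tilde u,f\rangle=\operatorname{PS}_{\tilde\varphi_\mu,\tilde\varphi_{\mu'}}(f)$. The $\Gamma$-invariance of both sides (for the PS distribution this uses the transformation laws of $T_\pm$ derived in Step~1) then yields (\ref{eq:PS_identification}) on $S{\bf M}$.

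The main obstacle will be the measure-theoretic bookkeeping in the core computation: three natural densities appear simultaneously — the Liouville density on $G/M$, the product $K$-invariant density $\intd b\otimes\intd b'$ on $(\pl{\bf H}^n)^2_\Delta$, and the $G$-invariant density on $G/MA$ — and their mutual Jacobians, governed by the Harish-Chandra formula, are precisely what shift the exponents from $\mu_\pm-\rho$ (natural in the resonant-state picture) to $\mu_\pm+\rho$ (natural in the Patterson-Sullivan integrand). A secondary but delicate point is the explicit reduction $\langle gaK,B_-(gaM)\rangle=H(gaw_-)$, which encodes through Weyl conjugation the contribution of the "second" boundary point $gw_-M$ in the PS construction and has no analogue in the simpler $\Phi_+$ computation.
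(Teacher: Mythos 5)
Your proposal is correct and follows essentially the same strategy as the paper's proof: lift both sides to $G/M$, identify $G/M$ with $A\times(\pl{\bf H}^n)^2_\Delta$ via the $A$-fibre and the end-point maps, integrate the $\Phi_\pm^{\mu_\pm-\rho}$ factors over $A$ to produce the weighted Radon transform, and absorb the discrepancy between the $G$-invariant measure $\intd gMA$ and the $K$-invariant product density $\intd b\otimes\intd b'$ (the factor $e^{2\rho(H(g)+H(gw))}$) into the exponent to shift $\mu_\pm-\rho$ to $\mu_\pm+\rho$. One small imprecision worth flagging: as you define it, $F_f$ carries the exponents $\mu-\rho,\ \mu'-\rho$ and is therefore not literally equal to $\mathcal R_{\mu,\mu'}f$; equality only holds once you absorb the push-forward Jacobian into the integrand (or, equivalently, interpret the pairing $\langle T_+\otimes T_-,F_f\rangle$ against $(B_+,B_-)_*\intd gMA$ rather than $\intd b\otimes\intd b'$) — you state the correct idea but the phrasing ``giving $F_f(gMA)=(\mathcal R_{\mu,\mu'}f)(gMA)$'' should be adjusted.
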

\begin{proof}
We will prove the statement for the corresponding $\Gamma$-invariant distributions
on $G/M$. Let $\til \varphi_\mu\in \mathcal E_\mu(G/K)$ and
$\til\varphi_{\mu'}\in \mathcal E_{\mu'}(G/K)$ be the lifted Laplace eigenfunctions.
Then the lift of the left hand side of \eqref{eq:PS_identification} simply becomes
$\operatorname{PS}_{\til \varphi_\mu.\til \varphi_{\mu'}}$, while the right hand
side becomes
\[
 \left[[\mathcal Q_{\mu'-\rho,-}\circ(\mathcal P_{\mu'})^{-1}](\overline {\til\varphi_\mu})\cdot
  [\mathcal Q_{\mu-\rho,+}\circ(\mathcal P_{\mu})^{-1}]
 (\til\varphi_{\mu})\right](\intd (gM))
 \in\mc{D}'(G/M).
\]
Note that with the diffeomorphism
\[
 \Psi:\Abb{G/M}{A\times \mathcal (\pl{\bf H})_\Delta^2}{gM}{(a(g), B_+(gM), B_-(gM))}
\]
we can write the $G$ invariant measure $\intd (gM)$ as
\[
 \intd gM = e^{2\rho(H(g) + H(gw))} (\Psi^{-1})_* (\intd a\intd b\intd b').
 \]
In fact, in view of $G/MA\cong (\pl{\bf H})^2_\Delta$ and the description of the $K$-invariant measures on $G/P\cong K/M$ and $G/\overline P\cong K/M$ this follows from 
\[\int_{G/M} f(gM)~\intd (gM) =
\int_{G/MA}\int_A f(gaM)~\intd a\intd (gMA)\]
and $\Psi(gaM)=(a(g)a,B_+(gM), B_-(gM))$.
Now inserting the expressions for $\mathcal Q_{\lambda,\pm}$ from \eqref{eq:Q_lambda_def}
and replacing the measure, we obtain for the right-hand-side
\[
 e^{(\mu+\rho)H(g) + (\mu'+\rho)H(gw)}(\Psi^{-1})_*\left(da  \otimes
 [\mathcal P_{\mu}^{-1}(\til\varphi_{\mu})](db)
 \otimes [\mathcal P_{\mu'}^{-1}(\overline{\til\varphi_{\mu'}})](db')
 \right).
\]
This is exactly the Patterson-Sullivan distribution as defined in
Definition~\ref{def:PS}.
\end{proof}

\section{A pairing formula for Ruelle resonant states}

Let us normalize the Haar measure\footnote{This normalization is slightly different from normalizations common in the literature of symmetric spaces, where one has $c(\rho)=1$. We have chosen this normalization such that they give simple formulas in our geometric context.} on $\bN$ by
$\int_\bN e^{-2\rho H(\bn)} \intd \bn =\tu{vol}(\mathbb S^{n-1})$. Now we can define the Harish-Chandra c-function as the holomorphic function given by the convergent integral for $\tu{Re}(z)>0$ 
\begin{equation}\label{HCc}
 c(z):=\int_\bN e^{-(\rho+z)H(\bn)} \intd \bn.
\end{equation}
It has a meromorphic continuation to $z\in \C$ (see e.g. \cite[IV.6]{Hel84}) which is given by
\[
 c(\lambda\|\alpha_0\|) = c_0\frac{2^{-\lambda}\Gamma(\lambda)}{\Gamma(\frac{1}{2}(\frac{1}{2}m_{\alpha_0} + 1 +\lambda))\Gamma(\frac{1}{2}(\frac{1}{2}m_{\alpha_0}
 + m_{2\alpha_0} + \lambda))},
\]
where 
\[
 c_0 = \frac{\pi^{n/2}2^{1+\frac{1}{2}m_{\alpha_0} + m_{2\alpha_0}}\Gamma(\frac{1}{2}(m_{\alpha_0} + m_{2\alpha_0}+1))}{\Gamma(n/2)}.
\]
One easily checks that the zeros and poles of the c-function are contained in the real line. 
 
\begin{thm}\label{thm:pairing_formula}
Let $\lambda\in \C\setminus\{-\rho - \N_0\alpha_0\}$ 
be a Ruelle resonance in the first band and let $v\in \Res^0_X(\lambda)$ and $v^ *\in
\Res^0_{X^*}(\lambda)$ be some associated resonant/co-resonant states. Then we have 
\[
 \int_{\mathbf M} {\pi_0}_*(v) {\pi_0}_*(v^*) \intd x = 
 c(\rho+\lambda)  v\cdot v^* [1_{S\mathbf M}].
\]
The product $v\cdot v^*$ is well defined by the 
wavefront set properties of $v$ and $v^*$ and can be paired with the constant one
function $1_{S\mathbf M}$. 
\end{thm}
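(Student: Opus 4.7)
I would first lift to the universal cover. By Remark~\ref{lem:firs_band_Gamma_inv_vectors} and Proposition~\ref{prop:classical_compact_intertwining} there are $\Gamma$-equivariant boundary distributions $T,T^*\in\mc D'(\pl\mathbf H^n)$ such that the lifts $\tilde v,\tilde v^*\in\mc D'(G/M)$ satisfy $\tilde v=\mathcal Q_{\lambda,-}(T)=\Phi_-^\lambda B_-^*T$ and $\tilde v^*=\mathcal Q_{\lambda,+}(T^*)=\Phi_+^\lambda B_+^*T^*$; by Proposition~\ref{prop:poisson_projection} the lifts of ${\pi_0}_*(v)$ and ${\pi_0}_*(v^*)$ to $\mathbf H^n$ are the Poisson transforms $\mathcal P_{\lambda+\rho}(T)$ and $\mathcal P_{\lambda+\rho}(T^*)$. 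Choosing a compactly supported $\Gamma$-partition of unity $\chi_0\in C^\infty_c(\mathbf H^n)$ and setting $\chi:=\pi_0^*\chi_0\in C^\infty_c(G/M)$ (which is again a $\Gamma$-partition of unity by the $\Gamma$-equivariance and properness of $\pi_0$), the identity to prove unfolds to
\[
\int_{\mathbf H^n}\chi_0\,\mathcal P_{\lambda+\rho}(T)\,\mathcal P_{\lambda+\rho}(T^*)\,\intd x\;=\;c(\rho+\lambda)\int_{G/M}\chi\,\tilde v\,\tilde v^*\,\intd gM.
\]

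Next I would expand the Poisson kernels via their integral representation $\mathcal P_\mu(T)(gK)=\int_{K/M}e^{(\mu+\rho)\langle gK,kM\rangle}T(kM)\intd kM$ with $\mu=\lambda+\rho$, and on the right-hand side apply the diffeomorphism $\Psi:G/M\to(\pl\mathbf H^n)^2_{\Delta}\times A$ of the proof of Theorem~\ref{thm:patterson_sullivan}, disintegrating $\intd gM=e^{2\rho(H(g)+H(gw))}\intd a\,\intd b\,\intd b'$. Both sides then take the common form of a pairing of $T\otimes T^*$ against a smooth kernel on $\pl\mathbf H^n\times\pl\mathbf H^n$; since the image of $\Psi$ already excludes the diagonal, the identity reduces to an equality of smooth kernels on the open Bruhat cell $(\pl\mathbf H^n)^2_{\Delta}\cong G/MA$, and the wavefront calculus of \cite{HoeI} ensures that all the distributional products, restrictions and pullbacks involved are well defined throughout.

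By $G$-equivariance of both sides of the kernel identity it suffices to verify it at a single base point $(b_0,b_0')=(eM,w_-M)\in G/MA$. In Iwasawa coordinates $g=\bar n\,a\,k\in\bar N AK$ adapted to this base point, the functions $\Phi_\pm(gM)$, the horocycle brackets $\langle gK,b_0^{(\prime)}\rangle$ and the Jacobian factors all become explicit functions of $H(\bar n)$ and $\log a$. After matching the $A$-integrals on both sides and collapsing the two $K$-integrations of the Poisson transforms, the identity reduces to the single $\bar N$-integral
\[
\int_{\bar N}e^{-(\rho+\lambda)H(\bar n)}\,\intd\bar n\;=\;c(\rho+\lambda),
\]
which is exactly the defining formula (\ref{HCc}) of the Harish-Chandra $c$-function.

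The main obstacle is the careful accounting of Jacobians, and this is where the anisotropy of the Lyapunov exponents (non-constant curvature) introduces new difficulties beyond~\cite{DFG15}: the nilpotent group $\bar N$ now has two distinct root subspaces $\mathfrak g_{-\alpha_0}$ and $\mathfrak g_{-2\alpha_0}$ carrying different exponential weights, so the $\bar N$-Jacobian does not reduce to a single homogeneous dilation and $H(\bar n)$ must be used in its full nonlinear form. Moreover, the $\bar N$-integral above converges absolutely only for $\Re(\rho+\lambda)>0$, so for general first-band resonances the formula is obtained by meromorphic continuation of both sides as holomorphic families in $\lambda$, with the wavefront-set formalism (Lemma~\ref{lem:microlocal_res_state}) and the Fredholm framework of Proposition~\ref{thm:fredholm_property} providing the analytic justification.
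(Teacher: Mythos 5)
Your approach — lift to the cover, express $v,v^*$ via boundary distributions $T,T^*$, unfold with a $\Gamma$-partition of unity, and try to reduce to a kernel identity on $(\partial\mathbf H^n)^2_\Delta$ verified at a base point — is genuinely different from the paper's, which works throughout on the compact quotient: the authors rewrite the left side as $\int_{S^2\mathbf M}v(x,\eta_-)v^*(x,\eta_+)\,\intd x\intd\eta_-\intd\eta_+$ over the double sphere bundle, build a diffeomorphism $\mathcal A:S^2_\Delta\mathbf M\to E_u$ (Lemma~\ref{lem:pairing_coordinate}) that produces the integral $\int_{\bN}e^{-(2\rho+\lambda)H(\bn)}\chi\,\intd\bn$ from a cutoff $\chi$, and then regularize by a family $\chi_\varepsilon$ concentrating on the antidiagonal, proving matching asymptotic expansions in $\varepsilon$ for $I_c(\varepsilon)$ and $I_0(\varepsilon)$ (Lemmas~\ref{lem:smooth_extension} and~\ref{lem:L-regularizing}). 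Note first a small slip: by \eqref{HCc}, $c(\rho+\lambda)=\int_{\bN}e^{-(2\rho+\lambda)H(\bn)}\intd\bn$, not $e^{-(\rho+\lambda)H(\bn)}$.

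There are, however, two genuine gaps that prevent your argument from closing. First, the asserted reduction to ``an equality of smooth kernels on the open Bruhat cell'' does not hold. After unfolding, both sides are pairings of $T\otimes T^*$ against kernels that still carry the fundamental-domain cutoff $\chi_0$. On the left this cutoff sits in the fiberwise integral over $\mathbf H^n\cong A\times\bN$ for each fixed $(b,b')$; on the right it sits in the $A$-integral of $\chi_0\circ\pi_0\circ\Psi^{-1}$. These do \emph{not} agree pointwise in $(b,b')$ — only their pairings against the $\Gamma$-equivariant distribution $T\otimes T^*$ can be compared, and that is precisely where the divergence must be controlled. In particular $\chi_0$ does not factor across the $A\times\bN$ split, so one cannot ``collapse'' the $\bN$-integral into $c(\rho+\lambda)$ without first removing $\chi_0$ (which is only possible after re-folding to the quotient, returning to where one started).

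Second, the appeal to ``meromorphic continuation of both sides as holomorphic families in $\lambda$'' is not available here: the boundary distributions $T,T^*$ are attached to a specific Ruelle resonance and do not vary holomorphically with $\lambda$. The divergence you correctly identify for $\Re(\rho+\lambda)\le 0$ is therefore a genuine analytic obstruction, not something that continuation handles. The paper overcomes it by the delicate $\varepsilon$-regularization: the vector field $\tilde L$ is designed (via a twist by the geodesic flow, see Lemma~\ref{lem:smooth_extension}) to be smooth across the antidiagonal, and Lemma~\ref{lem:L-regularizing} produces the exponents $\beta_\ell(\lambda)=2(\check\rho+\check\lambda+\ell-1)$ controlling the divergent terms, which cancel between $I_c(\varepsilon)$ and $I_0(\varepsilon)$ because their sum is $\varepsilon$-independent. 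Without an analogue of this mechanism your base-point kernel computation remains formal whenever $\Re(\lambda)=-\rho$, which is exactly the first-band case of interest.
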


As a direct consequence of Theorem~\ref{thm:pairing_formula} we obtain that for two
quantum eigenstates $\varphi,\varphi'\in \tu{Eig}_{\Delta_{\mathbf M}}(\mu)$ the normalization of the corresponding Patterson-Sullivan distribution is given by
\[
 \operatorname{PS}_{\varphi,\varphi'}[1_{S\mathbf M}] = \frac{1}{c(\mu)} \langle\varphi,\varphi'\rangle_{L^2(\mathbf M)}.
\]
Furthermore, the pairing formula allows to relate the invariant Ruelle distributions 
(defined in \eqref{eq:inv_ruelle_dist}) to Patterson-Sullivan distributions:

\begin{cor}
 Let $r>0$ such that $-\rho+ir$ is a Ruelle resonance of multiplicity $m$ in the first band. Then $\rho^2+r^2$ is an eigenvalue of $\Delta_{\bf M}$ with multiplicity $m$ and, for
 an $L^2$-orthonormal basis $\varphi_1,\ldots,\varphi_m$ of 
 $\tu{Eig}_{\Delta_{\mathbf M}}(ir)$, we have
 \[
  \mathcal T_{-\rho+ir} = \frac{c(ir)}{m} \sum_{l=1}^m \operatorname{PS}_{\varphi_l, \varphi_l}.
 \]
\end{cor}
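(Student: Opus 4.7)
The plan is to unpack both assertions by combining the three main theorems already established (the quantum--classical correspondence, the Patterson--Sullivan identification, and the pairing formula) with the standard Schwartz kernel representation of the spectral projector $\Pi_{-\rho+ir}$.

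First I would settle the multiplicity. Since $ir \notin \mathbf{Ex}$ (the exceptional set is real) and $ir \neq 0$, Theorem~\ref{thm:jordan} rules out first-band Jordan blocks, and Theorem~\ref{thm:intertwining} gives a linear bijection ${\pi_0}_*:\tu{Res}^0_X(-\rho+ir)\to \tu{Eig}_{\Delta_{\bf M}}(ir)$. Thus $m_{-\rho+ir} = m = \dim \tu{Eig}_{\Delta_{\bf M}}(ir)$, and the associated Laplace eigenvalue is $\rho^2-(ir)^2 = \rho^2+r^2$, as asserted.

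Next I choose convenient bases. Given an $L^2$-orthonormal basis $\varphi_1,\dots,\varphi_m$ of $\tu{Eig}_{\Delta_{\bf M}}(ir)$, set
\[
\tilde v_l := \mathcal I_-(ir)\overline{\varphi_l} \in \tu{Res}^0_X(-\rho+ir),
\qquad
\tilde v_l^* := \mathcal I_+(ir)\varphi_l \in \tu{Res}^0_{X^*}(-\rho+ir).
\]
By the definition of $\mathcal I_\pm$ we have ${\pi_0}_*\tilde v_l = \overline{\varphi_l}$ and ${\pi_0}_*\tilde v_l^* = \varphi_l$, so Theorem~\ref{thm:pairing_formula} applied to $(\tilde v_k,\tilde v_l^*)$ yields
\[
\int_{S{\bf M}} \tilde v_k\cdot \tilde v_l^* \, d\mu_{\textup{L}} \;=\; \frac{1}{c(ir)}\int_{\bf M} \overline{\varphi_k}\,\varphi_l\, dx \;=\; \frac{\delta_{kl}}{c(ir)}.
\]
Moreover, by Theorem~\ref{thm:patterson_sullivan} we have the identity of distributions $\tilde v_k \cdot \tilde v_l^* \equiv \PS_{\varphi_l,\varphi_k}$ (or at least its lift), and in particular $\tilde v_l \cdot \tilde v_l^* = \PS_{\varphi_l,\varphi_l}$.

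The final step is to use the standard microlocal description of $\Pi_{-\rho+ir}$: in the absence of Jordan blocks, its Schwartz kernel decomposes as $\Pi_{-\rho+ir}=\sum_{l=1}^m v_l\otimes v_l^*$, with $v_l\in\tu{Res}_X(-\rho+ir)$ and $v_l^*\in\tu{Res}_{X^*}(-\rho+ir)$ subject to the biorthogonality $\langle v_k, v_l^*\rangle_{L^2}=\delta_{kl}$ (the pairing being legitimate by the wavefront conditions from Lemma~\ref{lem:microlocal_res_state}). The previous computation shows that this is satisfied by the normalized choice $v_l:=c(ir)\tilde v_l,\ v_l^*:=\tilde v_l^*$. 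The flat trace of $f\Pi_{-\rho+ir}$ then equals $\sum_l \langle f v_l, v_l^*\rangle = \sum_l \int_{S{\bf M}} f\cdot v_l v_l^* \, d\mu_{\textup{L}}$, so by \eqref{eq:inv_ruelle_dist}
\[
\mathcal T_{-\rho+ir} \;=\; \frac{1}{m}\sum_{l=1}^m v_l \, v_l^* \;=\; \frac{c(ir)}{m}\sum_{l=1}^m \tilde v_l \, \tilde v_l^* \;=\; \frac{c(ir)}{m}\sum_{l=1}^m \PS_{\varphi_l,\varphi_l},
\]
which is exactly the claimed formula.

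The only genuine subtlety is step three: one must verify that the Schwartz kernel of $\Pi_{-\rho+ir}$ factors as $\sum v_l\otimes v_l^*$ with biorthogonal resonant/co-resonant pairs (so that the flat trace is the sum of distributional pairings $\langle f v_l, v_l^*\rangle$). This is however a direct consequence of the wavefront description of $\Pi_{-\rho+ir}$ given in \cite{DZ16} together with the absence of Jordan blocks ensured by Theorem~\ref{thm:jordan}, so no new analytic input is needed.
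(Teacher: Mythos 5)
Your proof is correct and follows essentially the same route as the paper's own: build bases of resonant and co-resonant states via $\mathcal I_\pm(ir)$, use the pairing formula (Theorem~\ref{thm:pairing_formula}) to normalize them into a biorthogonal system, invoke Theorem~\ref{thm:jordan} (no Jordan blocks) to write $\Pi_{-\rho+ir}=\sum_l v_l\otimes v_l^*$, and conclude with the identification of the products as Patterson--Sullivan distributions from Theorem~\ref{thm:patterson_sullivan}. The differences are cosmetic only: you place the factor $c(ir)$ on the resonant rather than the co-resonant states and you spell out the multiplicity statement explicitly, which the paper leaves implicit in Theorem~\ref{thm:intertwining}.
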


\begin{proof}
 Via the quantum-classical correspondence (Theorem~\ref{thm:intertwining}) we define a basis 
 of Ruelle resonant states $u_l:= \mathcal I_-(ir)(\varphi_{l}) \in \tu{Res}_{X}(-\rho+ir)$ 
 as well as a basis of co-resonant states $u_l^*:= c(ir) \mathcal I_+(ir)(\overline{\varphi_{l}})\in
\tu{Res}_{X^*}(-\rho+ir)$.
Now the pairing formula (Theorem~\ref{thm:pairing_formula}) together with the chosen normalization 
of $u_i^*$ implies the bilinear pairing formula 
\[\cjg u_i^*,u_j\cjd := \int_{S\mathbf M} u_i\cdot u_i^* d\mu_{\tu{L}} = \delta_{ij},\]
which means that the basis $u_i^*$ is dual to the chosen basis $u_i$. 
By Theorem~\ref{thm:jordan} we know that there are no Jordan blocks
at the spectral parameter $-\rho+ir$ and thus the spectral projector can be written
as $\Pi_{-\rho+ir} = \sum_{l=1}^{m} u_l\otimes u_l^*$. Now for $f\in C^{\infty}(\mathcal M)$
we obtain
\[
 \mathcal T_{\rho+ir}[f] = \frac{1}{m} \sum_{l=1}^{m} \cjg u_l^*,f u_l\cjd
 = \frac{c(ir)}{m}\sum_{l=1}^{m} \operatorname{PS}_{\varphi_{l}, \varphi_{l}}[f]
\]
which completes the proof.
\end{proof}

So far, pairing formulas like Theorem~\ref{thm:pairing_formula} have been 
shown for compact hyperbolic surfaces \cite[Theorem 1.2]{AZ07} and compact real hyperbolic manifolds \cite[Lemma 5.10]{DFG15}. We follow the strategy of proof of \cite{DFG15}: first we consider
$\int_{\mathbf M} {\pi_0}_*(v){\pi_0}_*(v^*)$ with respect to the measure which is obtained by restricting the measure $\intd x$ on $G/K$ to a $\Gamma$-fundamental domain. Then we construct a coordinate transformationation on an open dense subset that formally makes  the 
Harish-Chandra c-function appear as an integral over $N$ (see Lemma~\ref{lem:pairing_coordinate}).
However, as we integrate distributions, we have to cut out an $\epsilon$-neighborhood of the points where the coordinate transformation is not defined and consider the limit $\epsilon\to 0$ (see Lemma~\ref{lem:pairing_asympt_expansion}).
As in \cite{DFG15} this turns out to be a subtle limit that requires a suitable regularization 
of divergent integrals. Compared to the constant curvature case there are two major challenges: 
first, one has to replace the explicit computations in the hyperboloid model in the construction of the coordinate transformations. Second, one has to deal with the fact that the
defining integral of the c-function becomes an integral over a non-commutative group $N$,  
adding some anisotropy to the regularization process. 

\subsection{A suitable coordinate transformation}
Let us define the double unit sphere bundle $\mathbb S^ {n-1}\times \mathbb S^ {n-1}\to S^2\mathbf{M}\to \mathbf M$  as the pullback of $S\mathbf M\times S\mathbf M$ under the diagonal embedding $\mathbf M \hookrightarrow \mathbf M\times\mathbf M$ (or equivalently as $S^2\mathbf{M}=\{((x,\eta_+),(x',\eta_-))\in S{\bf M}\x S{\bf M}\mid x=x'\}$) and equip this bundle with the measure $\intd x\otimes\intd{\mu}_{\mathbb S^{n-1}}\otimes\intd{\mu}_{\mathbb S^{n-1}}$ for which we will use the slightly shorter notation
$\intd x\intd \eta_+\intd\eta_-$. Define
\[
 I:=\int_{\mathbf M} {\pi_0}_*(v) {\pi_0}_*(v^*) \intd x = \int_{S^2\mathbf M} v(x,\eta_+)v^*(x,\eta_-) \intd x\intd \eta_+\intd\eta_-.
\]
The  integral on the right makes sense by the wavefront set properties of $v,v^*$: indeed,
$v\otimes v^*\in \mc{D}'(S{\bf M}\x S{\bf M})$ has wavefront set contained in 
$E_u^* \x E_s^*$ and its restriction to the submanifold
$S^2{\bf M}$ makes sense since $N^*(S^2{\bf M})\cap (E_u^* \x E_s^*)=\emptyset$ because of the transversality $E_u^*\cap E_s^*=\{0\}$.
We furthermore define the open dense subset $S^2_\Delta\mathbf M := \{(x,\eta_-,\eta_+)\in S^2\mathbf M \mid \eta_-+\eta_+ \neq 0\}$. Recall that the unstable bundle $E_u\to\mathbf M$ is given by an associated vector bundle $E_u = \Gamma\backslash G\times_M\n_-$ and, using the exponential map, can be identified with $\Gamma\backslash G\times_M\bN$. We will denote points in $E_u$ by equivalence classes $[g,\overline n]$, where $[gm,\bn]= [g,m\bn m^{-1}]$ for all $m\in M$. Thus, any $M$-conjugation invariant function $\chi\in C_c^\infty(\bN)$ defines a function in $C_c^\infty(E_u,\C)$, which we also denote by $\chi$. We have:

\begin{lem}\label{lem:pairing_coordinate}
 There is a diffeomorphism $\mathcal A :S^{2}_\Delta \mathbf M \to E_u$ 
 such that for any $M$-conjugation invariant function 
 $\chi\in C_c^\infty(\bN)$, any Ruelle resonance 
 $\lambda\in \C$ in the first band and associated resonant/co-resonant states 
 $v\in \Res^0_X(\lambda)$, $v^ *\in \Res^0_{X^*}(\lambda)$, we have
 \begin{equation}\label{eq:pairing_coordinate}
   \begin{split}
     \int_{S^2_\Delta \mathbf M} v(x,\eta_-)v^*(x,\eta_+)\chi\circ&\mathcal A  (x,\eta_-,\eta_+)\intd x\intd \eta_- \intd \eta_+ =\\
     &\int_{S\mathbf M} v v^*d\mu_{\tu{L}}\int_{\bN} e^{-(2\rho+\lambda)H(\bn)}\chi(\bn) \intd \bn 
   \end{split}
 \end{equation}
\end{lem}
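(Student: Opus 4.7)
The plan is to construct $\mathcal A$ explicitly via the Iwasawa/Bruhat decompositions of $G$, compute the Jacobian that arises in the change of variables, and then use the representation-theoretic transformation laws of $\tilde v, \tilde v^*$ on the cover to extract the remaining $e^{-\lambda(H_0)H(\bn)}$ factor.

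First I would build $\mathcal A$ on $S^2_\Delta\mathbf{H}^n$. A lift of $(x,\eta_+,\eta_-)$ may be written as $(g_+M, g_+kM)$ with $g_+\in G$ and $k\in K$, and the non-antipodal condition forces $kM\neq w_-M$; equivalently, $kP\in G/P$ lies in the big Bruhat cell $\bN P$, so there is a unique $\bn\in\bN$ with $\bn P = kP$. I set $\til{\mathcal A}(x,\eta_+,\eta_-):=[g_+,\bn]\in G\times_M \bN$. The fact that $M$ normalizes $\bN$ by conjugation makes this assignment $M$-equivariant, so it is well defined on $G/M$-classes; $\Gamma$-equivariance is automatic, and the map descends to $\mathcal A:S^2_\Delta\mathbf{M}\to E_u$. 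Writing out the $KAN$ Iwasawa decomposition $\bn = k(\bn)a(\bn)n(\bn)$, the explicit inverse $[g_+,\bn]\mapsto(g_+K, g_+M, g_+k(\bn)M)$ shows $\mathcal A$ is a diffeomorphism.

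Next I would compute the pullback of $dx\,d\eta_+\,d\eta_-$ under $\mathcal A^{-1}$. Projection onto the first two coordinates makes $S^2_\Delta\mathbf{M}\to S\mathbf{M}$ a fiber bundle with fibers open subsets of $\mathbb S^{n-1}\cong K/M$ carrying the standard $K$-invariant measure. The identification of these fibers with $\bN$ is exactly the map $\bn\mapsto k(\bn)M$, whose Jacobian is furnished by the classical Iwasawa integration formula
\[
\int_K f(k)\,dk = \int_\bN f(k(\bn))\,e^{-2\rho H(\bn)}\,d\bn,
\]
giving $dx\,d\eta_+\,d\eta_- = d\mu_{\tu L}(x,\eta_+)\otimes e^{-2\rho H(\bn)}\,d\bn$. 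This produces the $e^{-2\rho H(\bn)}$ in the weight. Finally, lifting $v,v^*$ to $\tilde v\in{}^{\Gamma}\mc R_-(\lambda)$ and $\tilde v^*\in{}^{\Gamma}\mc R_+(\lambda)$, the first-band characterization combined with Proposition~\ref{prop:classical_compact_intertwining} gives the right-action transformation laws: $\tilde v$ is right-$\bN$-invariant and satisfies $\tilde v(ga)=e^{-\lambda(H_0)H(a)}\tilde v(g)$, while $\tilde v^*$ is right-$N$-invariant and satisfies $\tilde v^*(ga)=e^{\lambda(H_0)H(a)}\tilde v^*(g)$. Writing $g_+k(\bn) = g_+\bn\,n(\bn)^{-1}a(\bn)^{-1}$ with $H(\bn)=H(a(\bn))$, the $A$-equivariance of $\tilde v$ produces the factor $e^{-\lambda(H_0)H(\bn)}$; the remaining $\bn\,n(\bn)^{-1}$ shift is absorbed after a change of variables on a $\Gamma$-fundamental domain of $G/M$ using the $N$-invariance of $\tilde v^*$ together with the $\bN$-invariance of $\tilde v$, with the $M$-conjugation invariance of $\chi$ ensuring compatibility with the $M$-ambiguity of the fiber identification. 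This yields
\[
\int_{S\mathbf{M}}\tilde v(g_+k(\bn)M)\tilde v^*(g_+M)\,d\mu_{\tu L} \;=\; e^{-\lambda(H_0) H(\bn)}\int_{S\mathbf{M}} v\,v^*\,d\mu_{\tu L},
\]
and multiplying by $\chi(\bn)e^{-2\rho H(\bn)}$ and integrating over $\bn\in\bN$ gives the claimed identity.

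The main obstacle is to make the above pointwise manipulations rigorous at the distributional level. The product $v\cdot v^*$ on $S\mathbf M$ is well defined by the WF-transversality $E_u^*\cap E_s^*=0$, but the change of variables under $\mathcal A$ requires control of the wavefront sets under both the diffeomorphism and the fiber integration in $\bn$. I would handle this by approximating $v,v^*$ by microlocal cutoffs (smooth functions with wavefront sets kept inside $E_u^*,E_s^*$ respectively), performing the computation in the smooth category where Fubini and ordinary change of variables apply, and then passing to the distributional limit; the compact support of $\chi$ keeps everything bounded away from the antipodal locus where $\mathcal A$ degenerates, so no boundary terms arise.
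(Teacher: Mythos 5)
Your overall strategy — realize $S^2_\Delta\mathbf M$ as a $\overline N$-bundle over $S\mathbf M$ via Iwasawa coordinates, get $e^{-2\rho H(\bn)}$ from the measure Jacobian, get $e^{-\lambda H(\bn)}$ from the equivariance properties of the first-band states — is sound in spirit, and is closer to a bare-hands computation than the paper's (which builds $\mathcal A$ out of the composition of three geometric maps $\psi^{-1}\circ\mathcal G^{-1}\circ\mathcal E$, so that the base point $gM$ is the projection of $x$ along an unstable leaf onto the geodesic determined by $B_\pm$, and then the Jacobian is computed through two auxiliary propositions in the appendix). However, your construction has a genuine error that makes the key step fail.

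You choose the base point of the $E_u$-fiber to be $g_+M$, i.e.\ the lift of $(x,\eta_+)$, so that $v^*(x,\eta_+)=\tilde v^*(g_+M)$ costs nothing and all the work goes into $v(x,\eta_-)=\tilde v(g_+k(\bn)M)$. This is the wrong choice. Writing $k(\bn)=\bn\,n(\bn)^{-1}a(\bn)^{-1}$ and using $\tilde v(ga)=e^{-\lambda(H_0)H(a)}\tilde v(g)$, the $a(\bn)^{-1}$ factor gives $e^{-\lambda(H_0)H(a(\bn)^{-1})}=e^{+\lambda(H_0)H(\bn)}$, not $e^{-\lambda(H_0)H(\bn)}$ as you claim; the sign is opposite to what the lemma requires. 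Worse, the ``absorption'' of the remaining $\bn\,n(\bn)^{-1}$ shift by a change of variables fails: if you set $g=g_+\bn n(\bn)^{-1}$ you are left with $\tilde v(g)\,\tilde v^*(g\,n(\bn)\bn^{-1})$, and the rightmost factor $\bn^{-1}\in\overline N$ cannot be stripped because $\tilde v^*$ is right $N$-invariant, not right $\overline N$-invariant; if instead you set $g=g_+\bn$ you get $\tilde v(g\,n(\bn)^{-1})\,\tilde v^*(g\,\bn^{-1})$ and now both factors carry a group element the corresponding state is \emph{not} invariant under. The invariances of $\tilde v$ ($\overline N$) and $\tilde v^*$ ($N$) point exactly the wrong way for your base-point choice, so the ``absorption'' is not a gap in rigor but an actual obstruction.

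The fix is to take the base to be $g_-M$, the lift of $(x,\eta_-)$, so that $v(x,\eta_-)=\tilde v(g_-M)$ and the work goes into $v^*(x,\eta_+)=\tilde v^*(g_-k(\bn)M)$. Then $\tilde v^*(g_-\bn\,n(\bn)^{-1}a(\bn)^{-1})=e^{-\lambda(H_0)H(\bn)}\tilde v^*(g_-\bn\,n(\bn)^{-1})$ has the right sign (since $\tilde v^*(ga)=e^{+\lambda(H_0)H(a)}\tilde v^*(g)$), the rightmost $n(\bn)^{-1}\in N$ is killed by the $N$-invariance of $\tilde v^*$, and the final substitution $g=g_-\bn$ hands the leftover $\bn^{-1}\in\overline N$ to $\tilde v$, which is $\overline N$-invariant. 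With this correction the computation closes cleanly. Two smaller points: $g\mapsto gk(\bn)M$ is not well defined on $G/M$ (since $k(\bn)$ does not normalize $M$), so the manipulation has to be carried out on $\Gamma\backslash G$ for $M$-invariant lifts, with the $M$-conjugation invariance of $\chi$ making the integral descend — you gesture at this but it needs to be said precisely; and your plan to justify the distributional manipulations by ``approximating $v,v^*$ by microlocal cutoffs'' is extraneous, since the wavefront-set transversality already makes all the pullbacks, products and fiber integrations well defined without a limiting argument, which is how the paper handles it.
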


\begin{proof}
\cpic{0.5\textwidth}{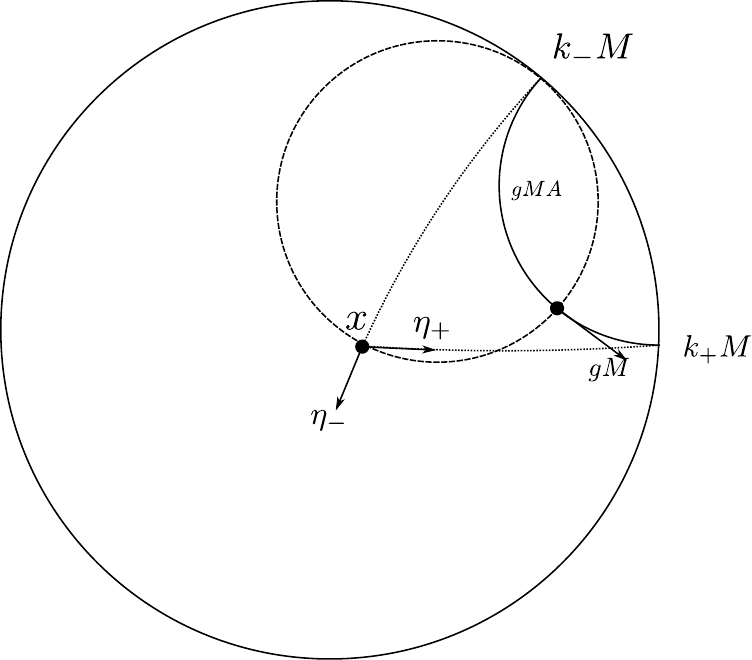}{The figure visualizes the coordinate transformation 
for the case $G=SL(2,\R)$: A base point $x\in G/K$ can be thought as a point in the Poincar\'e disk
and $\eta_\pm$ are tangent vectors of this point. The points $k_\pm M\in K/M$ are the starting 
and end points through the geodesics, tangent to the vectors $\eta_\pm$. Except in the case 
$\eta_++\eta_-=0$ those are two disjoint points on the boundary $K/M$ and they define a 
unique geodesic which can be interpreted by $gMA$. On this geodesic, the point $gM$ 
is by construction chosen such that it is the projection of $x$ along an unstable manifold of the 
geodesic $gMA$.}{pairing_visualization}{}
(For a geometric interpretation of the constructions below, see 
Figure~\ref{fig:pairing_visualization}). We first give an explicit construction
of the diffeomorphism by constructing a left $G$-invariant diffeomorphism 
 $\til{\mathcal A }: S^2_\Delta\mathbf{H}^n \to \til E_u \cong G\times_M\bN$.  
 In a first step consider the $G$-equivariant diffeomorphism
 \[
  \mathcal E:\Abb{S^2\hspc}{\hspc\times\bhspc\times \bhspc}{(x,\eta_-,\eta_+)}{(x,B_-(x,\eta_-),B_+(x,\eta_+))}
 \]
 via the initial and end point maps $B_\pm: S\hspc \to \bhspc$ from \eqref{eq:initial_end_point_map} and note that it restricts to a diffeomorphism $\mathcal E: S^2_\Delta\hspc \to\hspc \times (\bhspc)^2_\Delta$.
 Furthermore, recall 
 (see e.g. \cite[Section 2]{HS09} and \cite[Prop.~2.7]{HHS12}) that 
 there is a $G$-equivariant diffeomorphism
 \begin{equation}\label{eq:def_G}
  \mathcal G:\Abb{G/MA}{(\bhspc)^2_\Delta}{gMA}{(gwP, gP)}
 \end{equation}
after identifying $\bhspc\cong G/P$, where $w=w_-$ denotes a representative
of the nontrivial Weyl group element. Note that
the map
\begin{equation}\label{eq:def-psi}
 \psi:\Abb{G\times_M \bN}{G/K  \times G/MA}{{[g,\bn]}}{(g\bn K, gMA)}
\end{equation}
is well defined and left $G$-equivariant. 
Furthermore, we can construct an inverse using the $A\bN K$-decomposition 
$g= a_{A\bN K}(g)\overline{n}_{A\bN K}(g) k_{A\bN K}(g)$
\[
 \psi^ {-1}: (hK, gMA)\mapsto [ga_{A\bN K}(g^ {-1}h), \overline{n}_{A\bN K}(g^ {-1}h)].
\]
Using these three diffeomorphisms we define $\mathcal A:= \psi^{-1}\circ\mathcal G^{-1}\circ\mathcal E$.

It remains to prove \eqref{eq:pairing_coordinate}. We clearly have that the left hand side equals
\[
 \int_{E_u}  \mc{A}_*(vv^*) \chi(\bn)\mathcal A_*(\intd x\intd \eta_-\intd \eta_+).
\]
Thus, we have to compute the pushforward of the distribution $\mc{A}_*(v v^*)$ and the 
measure $\mc{A}_*(\intd x\intd \eta_-\intd \eta_+)$.

Let us first consider the pushforward $\mathcal A_*v^*$:
as $v^*$ is a first band co-resonant state, we know by
Proposition~\ref{prop:classical_compact_intertwining} that 
$v^*=\Phi_+^\lambda  B_+^*w$ for 
some distribution $w\in \mathcal D'(\bhspc)$. If we write 
$\mathcal A(x,\eta_-,\eta_+)=[g,\bn]$, then
by the construction of $\mathcal A$ we have $B_+(x,\eta_+)= B_+(gM)=k_{KAN}(g)M$ 
and consequently
\[
(\mathcal A_*v^*)([g,\bn]) = v^*(x,\eta_+) = 
e^{\lambda(\beta( x, k_{KAN}(g)M)-\beta( gK, k_{KAN}(g)M))} v^*(gM).
\]
Since $x=g\bn K$, we can use \eqref{eq:def_horocycle} and \eqref{eq:horo_group_action1}  to  
simplify this expression to 
\[
(\mathcal A_*v^*)([g,\bn]) = e^{\lambda\beta( \bn k, g^{-1}k_{KAN}(g)M)} v^*(gM)= e^{-\lambda H(\bn^{-1})} v^*(gM).
\]
In a similar manner we obtain the equality $(\mathcal A_*v)([g,\bn]) = v(gM)$. The latter equality can also be understood
geometrically as $(x,\eta_-)$ and $gM$ lie, by construction, on the same unstable manifold 
and $v$ is constant along the unstable leaves, because it is a first band co-resonant state.

Next, let us consider the transformation of measures. In analogy to the final 
step in the proof of Proposition~\ref{prop:poisson_projection} we use 
\cite[Lemma I.5.19]{Hel84} to establish the formula
\[
 \mathcal E_*(\intd x\intd \eta_-\intd \eta_+) = 
 e^{2\rho(\beta( x,k_-M) + \beta( x,k_+M))}\intd x\intd (k_-M)\intd (k_+M).
\]
Moreover, by the Propositions~\ref{prop:Mass1} and \ref{prop:Mass2} proved in the appendix, we have 
\[
 \mathcal G_*(e^{-2\rho(H(g) + H(gw))}\intd (gMA)) = c_{\mathcal G}\intd (k_-M) \intd (k_+M)
\]
and 
\[
 c_\psi \psi_*(\intd (gM)\intd \bn) = \intd (gK)\intd (gMA).
\]
Putting these three transformations together and simplifying the exponents using \eqref{eq:def_horocycle} and \eqref{eq:horo_group_action1} we obtain 
a constant $c_{\mathcal A}>0$ such that 
\begin{equation}
 \label{eq:push_forward_measure}
\mathcal A_*(\intd x\intd \eta_-\intd \eta_+)=  c_{\mathcal A} e^{-2\rho H(\bn^{-1})} \intd (gM)\intd \bn.
\end{equation}
Because $\intd\bn$ is invariant under inversion, this establishes
\eqref{eq:pairing_coordinate} up to a multiplicative constant. However, 
this constant is equal to $1$ by the choice of the normalizations of the measures. 
This can be seen by passing to a co-compact quotient and integrating a constant function on
both sides of the variable transformation. This completes the proof of 
Lemma~\ref{lem:pairing_coordinate}.
\end{proof}

\subsection{Renormalization}
The formula of Lemma~\ref{lem:pairing_coordinate} would directly
imply the desired pairing formula if we could set $\chi=1$. However, as $v,v^*$ are distributions, this is not allowed and in fact we see for the 
important case of taking a Ruelle resonance with
$\Re(\lambda)=-\rho$ that the integral over $\bN$ on the right hand side of 
\eqref{eq:pairing_coordinate} would not converge anymore. We thus have to 
perform a careful regularization of the appearing quantities.

As a first step we introduce a suitable cutoff function: We take an arbitrary 
$M$-conjugation invariant function $\chi\in C_c^\infty(\bN)$ which is equal to 
$1$ in a neighborhood of the identity. Then we define for any $\varepsilon>0$
\[
 \chi_\varepsilon(\bn) := \chi(e^{-\ln(\varepsilon)\cH}\bn e^{\ln(\varepsilon)\cH}),
\]
where $\check H_0:= H_0/\|\alpha_0\|$. Since the conjugation by $\exp({-\ln(\varepsilon)\cH})$ strictly contracts 
$\bN$ to the identity, the function $\chi_\varepsilon$ converges to $1$ on compact sets of $\bN$ as $\varepsilon \to 0$.
As $M$ centralizes $A$, $\chi_\varepsilon$
is still $M$-conjugation invariant, so that it defines a function in
$C_c^\infty(\Gamma\backslash G\times_M \bN, \C)$. After a pullback with 
$\mathcal A$ this function can be identified with a function in 
$C_c^ \infty(S^2_\Delta\mathbf M)\subset C^\infty(S^2\mathbf M)$. By abuse of 
notation we will denote all these instances of the function by $\chi_\varepsilon$ 
and it will be clear from the context where the function lives. Having 
defined $\chi_\varepsilon$ we can write
\[
 I=\underbrace{\int_{S^2_\Delta\mathbf M} v(x,\eta_-)v^*(x,\eta_+) \chi_\varepsilon \intd x\intd 
 \eta_-\intd \eta_+}_{=:I_c(\varepsilon)} 
 +\underbrace{\int_{S^2\mathbf M} v(x,\eta_-)v^*(x,\eta_+) (1- \chi_\varepsilon) \intd x\intd 
 \eta_-\intd \eta_+}_{=:I_0(\varepsilon)}.
\]
Note that by this decomposition the term $I_c(\varepsilon)$ can now be treated with the
 transformation from Lemma~\ref{lem:pairing_coordinate}. The idea of
taking formally $\chi=1$ would correspond to considering the limit $\varepsilon\to 0$.
We will see that in general this limit is defined neither for $I_c(\varepsilon)$ nor 
for $I_0(\varepsilon)$. However, we can prove the following important asymptotic expansions.

\begin{lem}\label{lem:pairing_asympt_expansion}
 Given a  Ruelle resonance $\lambda\in \C\setminus\{-\rho - \N_0\alpha_0\}$ in the first band  and corresponding Ruelle resonant/coresonant states
 $v\in\Res_X^0(\lambda), v^*\in\Res_{X*}^0(\lambda)$, there are nonzero 
 exponents  $\beta_\ell\in\C$, $\ell=1,\ldots, \ell_0$ such that for some 
 coefficients $\alpha_\ell \in\C$
 \begin{equation}\label{eq:pairing_asympt_expansion_c}
 \left|I_c(\varepsilon) - c(\lambda + \rho) \int_{S^*\mathbf M} vv^*\intd \mu_{\tu{L}} 
 - \sum_{\ell=1}^{\ell_0} \alpha_\ell \varepsilon^{\beta_\ell}\right|
  = \mc{O}(\varepsilon),
 \end{equation}
 and for some other set of coefficients $\alpha'_\ell$,
\begin{equation}\label{eq:pairing_asympt_expansion_0}
 \left|I_0(\varepsilon) - \sum_{\ell=1}^{\ell_0} \alpha'_\ell \varepsilon^{\beta_\ell}\right|
  = \mc{O}(\varepsilon). 
 \end{equation}
\end{lem}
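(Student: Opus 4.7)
I would treat $I_c(\varepsilon)$ and $I_0(\varepsilon)$ separately, exploiting the complementary support structure of $\chi_\varepsilon$ and $1-\chi_\varepsilon$. Since $\chi_\varepsilon$ is compactly supported and $M$-conjugation invariant by construction, Lemma~\ref{lem:pairing_coordinate} applies directly and gives the clean factorization
\[
I_c(\varepsilon)=\left(\int_{S\mathbf M} v\,v^*\,d\mu_{\tu L}\right)F(\varepsilon),\qquad F(\varepsilon):=\int_{\bar N}e^{-(2\rho+\lambda)H(\bar n)}\chi_\varepsilon(\bar n)\,d\bar n,
\]
so the first assertion \eqref{eq:pairing_asympt_expansion_c} reduces to producing an asymptotic expansion of the one-group integral $F(\varepsilon)$ as $\varepsilon\to 0$.

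\textbf{Expansion of $F(\varepsilon)$.} One should view $F(\varepsilon)$ as a regularization of the Harish-Chandra c-function integral in which $\chi_\varepsilon$ (whose effective scale is $\varepsilon^{-1}$ on $\bar{\mathfrak n}_{-\alpha_0}$ and $\varepsilon^{-2}$ on $\bar{\mathfrak n}_{-2\alpha_0}$) suppresses the large-$\bar n$ tail responsible for the lack of absolute integrability along the critical line $\operatorname{Re}(\lambda+\rho)=0$. Working in exponential coordinates $\bar n=\exp(X+Y)$ with $X\in\bar{\mathfrak n}_{-\alpha_0}$, $Y\in\bar{\mathfrak n}_{-2\alpha_0}$, the plan is to split the domain into a large-$\bar n$ region, on which the explicit rank-one polynomial formula for $e^{\alpha_0(H(\bar n))/\|\alpha_0\|}$ in $|X|^2,|Y|^2$ allows a binomial/Taylor expansion in the small parameters $\varepsilon|X|^{-1}$ and $\varepsilon^2|Y|^{-1}$, and a bounded-$\bar n$ region where dominated convergence together with analytic continuation in $\lambda$ from $\operatorname{Re}(\lambda+\rho)\gg 0$ identifies the $\varepsilon\to 0$ limit with $c(\lambda+\rho)$. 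Assembling both contributions and truncating at order $O(\varepsilon)$ yields
\[
F(\varepsilon)=c(\lambda+\rho)+\sum_\ell\tilde\alpha_\ell\varepsilon^{\beta_\ell}+O(\varepsilon),
\]
whose exponents $\beta_\ell$ are nonzero integer linear combinations of $(\lambda+\rho)(\check H_0)$ and $1$ determined by the root multiplicities. The hypothesis $\lambda\notin-\rho-\mathbb N_0\alpha_0$ simultaneously excludes vanishing of any $\beta_\ell$ (which would force a logarithmic correction and collide with the constant term) and poles of $c(\lambda+\rho)$.

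\textbf{Expansion of $I_0(\varepsilon)$.} The complementary cutoff $1-\chi_\varepsilon$ concentrates on a shrinking neighborhood of the antipodal submanifold $\mathcal Z:=\{\eta_-+\eta_+=0\}\subset S^2\mathbf M$. I would analyze $I_0$ using the explicit boundary-value description of first band (co)resonant states from Proposition~\ref{prop:classical_compact_intertwining}: writing $v=(\pi_{\Gamma})^*\mathcal Q_{\lambda,-}(T_-)$ and $v^*=(\pi_{\Gamma})^*\mathcal Q_{\lambda,+}(T_+)$ for boundary distributions $T_\pm\in\mathcal D'(\partial\mathbf H^n)$, the lifted product $v\,v^*$ on $G/M$ factors through the pair $(B_-,B_+)\colon G/M\to(\partial\mathbf H^n)^2$, and this pair degenerates precisely along the lift of $\mathcal Z$. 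Introducing tubular coordinates on a neighborhood of $\mathcal Z$ whose normal directions are adapted to the two unstable root spaces (with scales $\varepsilon$ and $\varepsilon^2$), the kernel $v\,v^*$ acquires a polyhomogeneous expansion whose leading exponents are governed by $(\lambda+\rho)(\check H_0)$ and the multiplicities. Pairing this expansion against $1-\chi_\varepsilon$ and truncating at order $\varepsilon$ produces
\[
I_0(\varepsilon)=\sum_\ell\alpha'_\ell\varepsilon^{\beta_\ell}+O(\varepsilon)
\]
with exactly the same set of exponents $\{\beta_\ell\}$ as in the $F$-expansion above.

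\textbf{Main obstacle.} The principal technical challenge, absent in the constant-curvature case of \cite{DFG15}, is the anisotropic scaling between the two root spaces: elements of $\bar{\mathfrak n}_{-\alpha_0}$ and $\bar{\mathfrak n}_{-2\alpha_0}$ contract at distinct rates $\varepsilon$ and $\varepsilon^2$, producing interleaved $\varepsilon$-powers in both the binomial expansion of $F$ and the polyhomogeneous expansion of $v\,v^*$ near $\mathcal Z$. Organizing these into a single finite list $\{\beta_\ell\}$ and verifying that the same list arises on both sides -- which is what ultimately forces the cancellation $\alpha_\ell+\alpha'_\ell=0$ from $I=I_c(\varepsilon)+I_0(\varepsilon)$ being $\varepsilon$-independent, hence delivering the pairing formula -- is the most delicate step. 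The excluded set $\lambda\in-\rho-\mathbb N_0\alpha_0$ corresponds precisely to those values where some $\beta_\ell$ would vanish and a logarithmic resolution would be required in place of the pure-power expansion.
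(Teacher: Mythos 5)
Your decomposition into $I_c$ and $I_0$ and the idea of matching power-law exponents so that only the $\varepsilon$-independent terms contribute is the right high-level scheme, and it agrees with the paper. However, the paper does \emph{not} proceed by direct asymptotic expansion. Instead it introduces the vector field $L$ on $\bN$ (and its lift $\tilde L$ on $S^2\mathbf M$) satisfying $\varepsilon\partial_\varepsilon\chi_\varepsilon=L\chi_\varepsilon$, integrates by parts to turn the $\varepsilon$-dependence into an ODE in $\varepsilon$, and then repeatedly applies $(\varepsilon\partial_\varepsilon-\beta_\ell(\lambda))$ using the regularizing functions $\mathcal V_\ell$ from Lemma~\ref{lem:L-regularizing}. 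This produces, in one stroke, the same exponents $\beta_\ell(\lambda)=2(\check\rho+\check\lambda+\ell-1)$ in both $I_c$ and $I_0$. Your approach for $I_c$ (direct splitting of the $\bar N$-integral into a near region and a far region, Taylor-expanding the explicit rank-one formula) is plausible and would, with some work, give the same exponents; since $F(\varepsilon)$ is a scalar integral this is a legitimate alternative route.

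The serious gap is in your treatment of $I_0(\varepsilon)$. You posit that $v\,v^*$ admits a polyhomogeneous expansion in scaled tubular coordinates near the antidiagonal $\mathcal Z$, and then pair term-by-term against $1-\chi_\varepsilon$. But $v\,v^*$ is a genuine distribution of finite positive order (a product of two distributions defined only by a wavefront-set condition), and proving polyhomogeneity of this distribution near the degenerate locus $\mathcal Z$ is precisely the hard analytic point. It is not something you can read off from the boundary-value description: near $\mathcal Z$ the map to boundary coordinates $(B_-,B_+)$ becomes singular, and naive rescaling in the two root spaces produces derivatives that blow up as one approaches $\mathcal Z$. The paper sidesteps this by never attempting a pointwise expansion of $v\,v^*$: it uses the lifted operator $\tilde L$, which is carefully twisted with a derivative along the geodesic flow exactly so that $(\mathcal A^{-1})_*\tilde L$ extends to a \emph{smooth} vector field across $\mathcal Z$ (Lemma~\ref{lem:smooth_extension}); the paper even flags in a footnote that without this twist one would see divergent derivatives. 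The integration-by-parts against this smooth operator then replaces the unestablished polyhomogeneity by the much softer statement that $\tilde{\mathcal V}_K(1-\chi_\varepsilon)$ has small $C^{2\ell-1}$-norm, which is enough for pairing against a distribution of finite order. Your proposal does not address how to control the distributional pairing near $\mathcal Z$, nor why the two root-space scales $\varepsilon$ and $\varepsilon^2$ combine into a finite clean list of exponents rather than, say, a full two-parameter family; this is the content that would need to be supplied to make the argument rigorous.
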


\begin{rem}
 As we know that $I=I_c(\varepsilon)+I_0(\varepsilon)$ is independent of
 $\varepsilon$, we directly conclude that for all $\ell$ with 
 $\tu{Re}(\beta_\ell)< 0$ we have $\alpha_\ell=-\alpha'_\ell$
 and Theorem~\ref{thm:pairing_formula} is a direct consequence
 of Lemma~\ref{lem:pairing_asympt_expansion}.
\end{rem}

An important tool in the proof of the expansions \eqref{eq:pairing_asympt_expansion_c} and \eqref{eq:pairing_asympt_expansion_0}
will be the following differential 
operator on $\bN$
\[
 (L f)(\bn) := \frac{d}{d\tau}_{|\tau=0} f(e^{-\tau \cH}\bn e^{\tau \cH}),~~~f\in C^\infty(\bN),
\]
 which satisfies the relation  
$\varepsilon\partial_{\varepsilon}\chi_\varepsilon =  L\chi_\varepsilon$. 
Note that for $U_1\in \g_{-\alpha}, U_2\in\g_{-2\alpha}$ we have
\begin{equation}
 \label{eq:L_Euler}
 Lf(\exp (U_1+U_2)) = \frac{d}{d\tau}_{|\tau=0} f\circ\exp(e^\tau U_1 + e^{2\tau} U_2).
\end{equation}
Thus, the operator $L$ corresponds to a special linear combination of Euler
operators on the two different root spaces.

As $L$ commutes with the $M$-conjugation on $\bN$ we can lift it to a 
differential operator $\til L$ with smooth coefficients on 
$\Gamma\backslash G\times_M\bN$ in the following way:
\[
 \til L f([g,\bn]) := \frac{d}{d\tau}_{|\tau=0} f\left(\left[\Gamma g\exp\left(\tau \check H_0\right), e^{-\tau\cH}\bn e^{\tau\cH}\right]\right).
\]
Note that $\til L$ is not just the differential operator that differentiates 
 along the fibers of $\Gamma\backslash G\times_M\bN$, but it is twisted with a derivative 
along the geodesic flow on the base space $ \Gamma \backslash G/M$. Nevertheless, 
we still have the property 
$\varepsilon\partial_\varepsilon \chi_\varepsilon = \til L\chi_\varepsilon$,
where $\chi_\varepsilon$ is now understood as a function on 
$\Gamma\backslash G\times_M\bN$. The twist is crucial for the following lemma.

\begin{lem}\label{lem:smooth_extension}
 The differential operator $(\mathcal A^{-1})_*\til L$ on the open dense set $S^2_\Delta\mathbf M\subset S^2\mathbf M$
 extends to a first order differential operator on $S^2\mathbf M$ with smooth coefficients. 
\end{lem}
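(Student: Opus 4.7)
The plan is to change coordinates on the target of $\mathcal A$, replacing the $(g,\bn)$-parametrisation of $E_u$ by the triple $(x,B_-,B_+)$ of base point and pair of boundary endpoints; in those coordinates the apparent singularity of $\mathcal A$ at the bad set disappears. I work on the universal cover and use the $G$-equivariant map
\[
\mathcal E\colon S^2\hspc\longrightarrow \hspc\times\bhspc\times\bhspc,\qquad (x,\eta_-,\eta_+)\mapsto(x,B_-(x,\eta_-),B_+(x,\eta_+))
\]
already introduced in the proof of Lemma~\ref{lem:pairing_coordinate}. For each fixed $x$ the maps $\eta\mapsto B_\pm(x,\eta)$ are diffeomorphisms from the unit sphere in $T_x\hspc$ onto $\bhspc$, so $\mathcal E$ is a global diffeomorphism of smooth manifolds; it restricts to $S^2_\Delta\hspc\to\hspc\times(\bhspc)^2_\Delta$ and sends the bad set $\{\eta_-+\eta_+=0\}$ precisely onto the diagonal $\{B_-=B_+\}$. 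Because $\mathcal A=\psi^{-1}\circ\mathcal G^{-1}\circ\mathcal E$ and $\mathcal E$ is smooth everywhere, it suffices to prove that the pushforward $(\mathcal G\circ\psi)_*\tilde L$, defined a priori on $\hspc\times(\bhspc)^2_\Delta$, extends smoothly across the diagonal; $G$-equivariance of all maps involved then lets the result descend to the $\Gamma$-quotient.

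To compute this pushforward I unwind $(\mathcal G\circ\psi)[g,\bn]=(g\bn K,\,gwP,\,gP)$ and apply the flow generated by $\tilde L$, namely $\tau\mapsto[g\exp(\tau\cH),e^{-\tau\cH}\bn e^{\tau\cH}]$. The second and third coordinates are preserved: $\exp(\tau\cH)\in A\subset P$ gives $g\exp(\tau\cH)P=gP$, while the Weyl relation $w^{-1}\cH w=-\cH$ yields $g\exp(\tau\cH)wP=gw\cdot\exp(-\tau\cH)\cdot P=gwP$. The first coordinate traces the curve $\tau\mapsto g\bn\exp(\tau\cH)K$, which is the geodesic through $x=g\bn K$ tangent to $g\bn M$; its backward endpoint as $\tau\to-\infty$ is $g\bn wP$, and the Bruhat identity $w^{-1}\bN w=N\subset P$ immediately gives $\bn wP=wP$, so $g\bn wP=gwP=B_-$. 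Consequently the tangent at $\tau=0$ is proportional (by a constant $c_0=\|\cH\|$ depending only on the normalisation of $H_0$) to the unique unit tangent $\eta_-(x,B_-)\in T_x\hspc$ whose backward geodesic accumulates at $B_-$. On $\hspc\times(\bhspc)^2_\Delta$ we therefore obtain the explicit expression
\[
(\mathcal G\circ\psi)_*\tilde L\;=\;c_0\,\eta_-(x,B_-)\cdot\partial_x\;+\;0\cdot\partial_{B_-}\;+\;0\cdot\partial_{B_+}.
\]

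The coefficient $c_0\,\eta_-(x,B_-)$ depends only on the pair $(x,B_-)\in\hspc\times\bhspc$, and the assignment $(x,b)\mapsto\eta_-(x,b)$, which to a point and boundary point associates the unique unit tangent at $x$ going away from $b$ along the geodesic from $b$ through $x$, is a smooth $T\hspc$-valued function on the entire product $\hspc\times\bhspc$ (standard for a simply connected Cartan--Hadamard manifold). Consequently the right-hand side of the displayed formula defines a smooth $G$-invariant vector field on all of $\hspc\times\bhspc\times\bhspc$ extending $(\mathcal G\circ\psi)_*\tilde L$. Pulling back through $\mathcal E^{-1}$ and descending to the $\Gamma$-quotient yields a smooth vector field on $S^2\mathbf M$ whose restriction to $S^2_\Delta\mathbf M$ agrees with $(\mathcal A^{-1})_*\tilde L$; since a smooth vector field is tautologically a first order differential operator with smooth coefficients, this completes the proof.

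The only non-routine ingredient is the Bruhat-type identification $\bn wP=wP$, which is what removes the apparent $\bn$-dependence of the backward endpoint and re-expresses the seemingly divergent coefficient $(L_{g\bn})_*\cH$ as a smooth function of the regular variables $(x,B_-)$ alone. Once this is in hand, the smooth extension across the diagonal is automatic from the explicit coordinate formula.
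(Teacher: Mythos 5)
Your proof is correct, and it takes a genuinely different route from the paper's. Both arguments agree that one should lift to $\hspc$, factor $\mathcal A$ as $\psi^{-1}\circ\mathcal G^{-1}\circ\mathcal E$, and exploit the conjugation identity coming from $w^{-1}\cH w=-\cH$; they diverge in the choice of coordinates in which the pushforward is analyzed. The paper parametrizes a neighbourhood of the antidiagonal by Bruhat-type coordinates $(naK, kP, kw\bn_\Delta P)$ which are adapted to $E_u$; in those coordinates $\mathcal A$ becomes $[nak\Omega(\bn_\Delta),\Omega(\bn_\Delta)^{-1}]$ and the pushed-forward vector field has nontrivial and a priori singular-looking components in every factor, so the work is in verifying that the explicit formula $(nake^{\tau\cH}K,\,k_{NAK}(ke^{\tau\cH})P,\,k_{NAK}(ke^{\tau\cH})we^{\tau\cH}\bn_\Delta e^{-\tau\cH}P)$ is smooth at $\bn_\Delta=e$, using the relation $\Omega(e^{-\tau\cH}\bn e^{\tau\cH})=e^{\tau\cH}\Omega(\bn)e^{-\tau\cH}$. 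You instead stay in the $(x,B_-,B_+)$-coordinates furnished by $\mathcal E$, observe directly that $\psi$ kills the $\tau$-dependence in the $G/MA$-slot (so both boundary components of $(\mathcal G\circ\psi)_*\tilde L$ vanish identically), and reduce the whole lemma to the fact that the unit tangent $\eta_-(x,b)$ at $x$ pointing away from a boundary point $b$ depends smoothly on $(x,b)\in\hspc\times\bhspc$ — the key step being the Bruhat identity $\bn wP=wP$, which shows the base-point curve's backward endpoint is precisely $B_-$. This buys a cleaner conceptual statement (the vector field is just the geodesic field associated with $B_-$, scaled by $\|\cH\|$, with vanishing boundary components), and it makes the role of the twist in $\tilde L$ transparent: without the extra $g\mapsto ge^{\tau\cH}$ factor, the base-point curve would be $\tau\mapsto ge^{-\tau\cH}\bn e^{\tau\cH}K$, whose velocity depends on $g$ and $\bn$ separately rather than only on $(x,B_-)$, and no smooth extension across the diagonal would be available — the same point the paper makes in its footnote, derived here without explicit $\Omega$-computations.

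One small remark for completeness: the assertion that $(x,b)\mapsto\eta_-(x,b)$ is smooth on all of $\hspc\times\bhspc$ deserves at least a citation or a one-line argument. In the symmetric-space setting it is immediate (for instance from the $NAK$-decomposition: writing $b=k'P$ and $x=gK$, the unique tangent $\eta$ with $B_-(x,\eta)=b$ is obtained by an explicit analytic formula involving Iwasawa components), so nothing is genuinely missing, but since this smoothness is precisely what replaces the paper's explicit coordinate check, it is worth recording.
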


By abuse of notation we will denote this extended operator on $S^2\mathbf M$
again by $\til L$.

\begin{proof}
Just as we did for the definition of $\mc{A}$, we 
 lift everything to $\hspc$. By definition the 
 vector field $\mathcal A^{-1}_*\til L$ is smooth on 
 $S^2_\Delta \hspc$,  so we have to study its behavior 
 near the  antidiagonal $\{\eta_-+\eta_+=0\}\subset 
 S^2\hspc$, and we need some appropriate coordinates
 for this neighborhood. By the $NAK$-decomposition
 we can identify 
 $S^2\hspc \cong G/K\times K/M\times K/M \cong
 G/K\times G/P\times G/P $. The 
 antidiagonal consists of the points 
 $(naK, kP, kwP)$, $na\in NA, k\in K$, where again $w$ 
 is some representative of the nontrivial
 Weyl group element.  
 By the Bruhat decomposition $(naK, kP, kw\bn_\Delta P)$ with 
 $na\in NA, k\in K, \bn_\Delta\in\bN$ parametrizes an open
 neighborhood of the antidiagonal and the antidiagonal 
 corresponds to $\bn_\Delta=e$. We now want to compute 
 how these coordinates are transformed for $\bn_\Delta \neq e$
 under the diffeomorphism $\mathcal A$.
 
 The crucial point is that we need an explicit expression for 
 $\mathcal G^{-1}$. This can also be achieved by the 
 Bruhat decomposition which for real rank
 one implies that $\bN P \subset G$ is open and 
 dense. Given $g\in\bN P$ we define
 $\overline{n}_{\bN P}(g)\in\bN$ to be the unique element
 such that $gP = \overline{n}_{\bN P}(g)P$. Then we can write
 \[
  \mathcal G^{-1}(g_1wP, g_2P) = g_1\overline{n}_{\bN P}(g_1^{-1}g_2)MA.
 \]
 Putting everything together we obtain 
 \[\begin{split}
  \mathcal A(naK, kP, kw\bn_\Delta P) = & 
  \psi^{-1}\mc{G}^{-1}(naK,nakwP,nakw\bn_{\Delta}P)\\
  =& \psi^{-1}(naK, nak\Omega(\bn_{\Delta})MA)
= [nak\Omega(\bn_\Delta),
 \Omega(\bn_\Delta)^{-1}],
\end{split} \]
 where
 \[
  \Omega:\Abb{\bN\setminus\{e\}}{\bN\setminus\{e\}}
  {\bn}{\overline{n}_{\bN P}(w\bn))}.
 \]
 The inverse transformation reads
 \begin{equation}\label{eq:pairing_A_inv}
  \mathcal A^ {-1}([g, \bn]) = (g\bn K, k_{NAK}(g\bn) P, k_{NAK}(g\bn)w
  \Omega^{-1}(\bn^{-1})P).
 \end{equation}
 
Now the crucial observation for the smoothness of $\til L$ is that 
for any $\bn\in\bN$
\[
we^{-\tau\cH}\bn e^{\tau\cH}P=e^{\tau\cH}w\bn P=e^{\tau\cH}\bn_{\bN P}(w\bn) P= e^{\tau\cH}\bn_{\bN P}(w\bn)e^{-\tau \cH} P, 
\]
so that one has 
$\Omega(e^{-\tau\cH}\bn e^{\tau\cH})=e^{\tau\cH}\Omega(\bn) e^{-\tau\cH}$  and $\Omega_*L=-L$.

Using this identity and  \eqref{eq:pairing_A_inv} we
compute the transformation of the global vector field:
\begin{align}
\nonumber
 (\mathcal A^{-1}_*\til L) f&([naK, kP, kw\bn_\Delta P]) = 
 \frac{d}{d\tau}_{|\tau=0}f(\mathcal A^{-1}[nak\Omega(\bn_\Delta) e^{\tau\cH}, 
 e^{-\tau\cH}\Omega(\bn_\Delta)^{-1} e^{\tau \cH}])\\
  &= \frac{d}{d\tau}_{|\tau=0} f(nake^{\tau\check H_0}K,
 k_{NAK}(ke^{\tau\cH})P, k_{NAK}(ke^{\tau\cH})we^{\tau\cH} \bn_\Delta e^{-\tau\cH}P),
 \label{eq:L_near_diag}
\end{align}
which clearly is smooth in a neighborhood of the diagonal.\footnote{Without the 
derivative along the geodesics in $\til L$ 
there would occur a derivative $\frac{d}{d\tau}_{|\tau=0} nake^{-\tau\cH}\Omega(\bn)
e^{\tau\cH}$ for $\mathcal A^{-1}_*L$. But near the diagonal in the limit $\bn\to e$, $\Omega(\bn)\to\infty$, so this derivative would diverge. This shows the necessity of the precise choice of $\til L$.}
\end{proof}

Using the smooth vector field $\til L$ on $S^2\mathbf M$ 
we get the identity $\varepsilon\partial_\varepsilon \chi_\varepsilon =  -\til L(1-\chi_\varepsilon)$ (for the last equality note that $\til L1=0$). Now
 integration by parts yields 
\begin{eqnarray}
\varepsilon\partial_\epsilon I_0(\varepsilon) &=& \int_{S^2\mathbf M} (\til L^*(vv^*))(1-\chi_\varepsilon)
 \intd x\intd \eta_-\intd \eta_+ \label{eq:partical_int}\\
\varepsilon\partial_\epsilon I_c(\varepsilon) &=& \int_{S^2\mathbf M} (\til L^*(vv^*))\chi_\varepsilon
 \intd x\intd \eta_-\intd \eta_+, \nonumber
\end{eqnarray}
where $\til L^*=-\til{L}-\tu{div}_{\intd x\intd\eta_-\intd\eta_+}(\til L)$ 
is the $L^2$ adjoint on $S^2\mathbf M$. By virtue of Lemma~\ref{lem:pairing_coordinate}
we obtain the alternative expression $\varepsilon\partial_\epsilon I_c(\varepsilon) =
\int_{S\mathbf M} vv^*\intd\mu_{\tu{L}} \cdot\int_{\bN} (L^* e^{-2(\rho+\lambda)H(\bn)})
\chi_\varepsilon(\bn)\intd \bn$, where $L^*=-L-\tu{div}_{\intd\bn}(L)$ is the $L^2$-adjoint
on $\bN$. 

The following lemma implies that $L$ is suitable for a
regularization at infinity in $\bn$ and $\til L$ is suitable for a regularization 
near the antidiagonal in $S^2M$. In order to formulate it, let us 
define $\langle \bn\rangle:= \|U_1\|^2+\|U_2\|$ where $ \bn=\exp(U_1+U_2)$,
$U_1\in\g_{-\alpha_0}, U_2\in\g_{-2\alpha_0}$.

\begin{lem}\label{lem:L-regularizing}
 There are functions $\mathcal V_{\ell}=\mathcal V_{\ell,\lambda} \in C^\infty(\bN)$ 
 depending polynomially on $\lambda$  as well as polynomials 
 $\beta_\ell(\lambda), \ell=1,2,\ldots$ 
 that do not vanish for $\lambda\in \C\setminus\{-\rho - \N_0\alpha_0\}$ such that
 \begin{equation}\label{eq:L-regularizing}
  (L^*-\beta_\ell(\lambda)) \mathcal V_{\ell-1}(\bn)e^{-(2\rho+\lambda)H(\bn)} = 
  \mathcal V_\ell(\bn)e^{-(2\rho+\lambda)H(\bn)},
 \end{equation}
 where we set $\mathcal V_0(\bn)=1$ and for fixed $\lambda$ the functions $\mathcal V_\ell$ satisfy the uniform pointwise 
 estimate $\mc{V}_\ell= \mc O(\langle \bn\rangle^{-\ell})$
 for $\langle \bn\rangle \to\infty$.
 
 Furthermore $\mathcal V_\ell \circ \mathcal A\in C^\infty(S^2_\Delta \mathbf M)$
 extends to a smooth function $\til{\mathcal  V}_\ell \in C^\infty(S^2\mathbf M)$
 that vanishes of order $\mc{O}(\langle\bn_\Delta\rangle^\ell)$ at the antidiagonal.\footnote{Recall that the coordinates $\bn_\Delta$ near the antidiagonal have been introduced in the proof of Lemma~\ref{lem:smooth_extension}.} Moreover, if $\lambda\in\C\setminus\{-\rho - \N_0\alpha_0\}$
 is a  Ruelle resonance in the first band and $v,v^*$ are corresponding 
 resonant/co-resonant states, then 
 \begin{equation}\label{eq:Ltilde-regularizing}
(\til L^* - \beta_\ell(\lambda) )\til{\mathcal V}_{\ell-1}vv^* = 
\til{\mathcal V}_\ell vv^*  
 \end{equation}
\end{lem}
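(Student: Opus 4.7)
My plan is to construct $\mathcal V_\ell$ and $\beta_\ell$ inductively, with $\mathcal V_0 = 1$, choosing $\beta_\ell(\lambda)$ at each step to cancel the leading-order eigenvalue of $L^*$ acting on $\mathcal V_{\ell-1}(\bn) e^{-(2\rho+\lambda)H(\bn)}$ so that the remainder decays one order faster at infinity in $\bN$.

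First I would make $L^*$ explicit. Writing $\bn = \exp(U_1+U_2)$ with $U_j \in \g_{-j\alpha_0}$, formula \eqref{eq:L_Euler} identifies $L$ with the weighted Euler operator $U_1\partial_{U_1}+2U_2\partial_{U_2}$; in particular $L\langle\bn\rangle = 2\langle\bn\rangle$, so a function of order $\langle\bn\rangle^{-\ell}$ has leading $L$-weight $-2\ell$. The divergence of $L$ against Haar measure equals $m_{\alpha_0}+2m_{2\alpha_0}=2\rho(\cH)$, giving $L^* = -L - 2\rho(\cH)$. Combined with the chain rule $Le^{-(2\rho+\lambda)H(\bn)} = -(2\rho+\lambda)(LH(\bn))e^{-(2\rho+\lambda)H(\bn)}$ and the asymptotic $LH(\bn)\to \cH$ as $\langle\bn\rangle\to\infty$ (read off from the Iwasawa decomposition of $e^{-\tau\cH}\bn e^{\tau\cH}$ for large $\bn$), this identifies the leading eigenvalue $\beta_1(\lambda)$ of $L^*$ on $e^{-(2\rho+\lambda)H(\bn)}$, and $\mathcal V_1$ is defined as the residue, manifestly of order $\langle\bn\rangle^{-1}$ and polynomial in $\lambda$.

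For the inductive step, assuming $\mathcal V_{\ell-1} = \mathcal O(\langle\bn\rangle^{-(\ell-1)})$ depends polynomially on $\lambda$, the Leibniz rule gives
\[
L^*(\mathcal V_{\ell-1} e^{-(2\rho+\lambda)H}) = \bigl(-L\mathcal V_{\ell-1} + (2\rho+\lambda)(LH)\mathcal V_{\ell-1} - 2\rho(\cH)\mathcal V_{\ell-1}\bigr)e^{-(2\rho+\lambda)H}.
\]
Using the leading $L$-weight of $\mathcal V_{\ell-1}$ together with $LH\to \cH$ extracts the coefficient $\beta_\ell(\lambda)\mathcal V_{\ell-1}e^{-(2\rho+\lambda)H}$ plus a remainder of order $\langle\bn\rangle^{-\ell}$, which defines $\mathcal V_\ell$. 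Polynomial dependence on $\lambda$ is preserved along the recursion. The nonvanishing of $\beta_\ell(\lambda)$ for $\lambda\notin -\rho-\N_0\alpha_0$ is then read off from the explicit recursive formula obtained by tracking the indicial exponents of $L^*$ at infinity in $\bN$ under the weighted $A$-scaling; the zero set of $\beta_\ell$ consists precisely of the exceptional values where the asymptotic eigenvalue of $L^*$ at weight $-2(\ell-1)$ matches one in the lattice $-\rho-\N_0\alpha_0$.

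For part (b), I would use the explicit form \eqref{eq:pairing_A_inv} of $\mathcal A^{-1}$: near the antidiagonal $\{\eta_++\eta_-=0\}$, Bruhat coordinates $\bn_\Delta\in\bN$ with $\bn_\Delta=e$ parametrize a neighborhood, and the $\bN$-variable appearing in $\mathcal A^{-1}$ is $\Omega^{-1}(\bn_\Delta^{-1})$, which diverges as $\bn_\Delta\to e$. A coordinate computation in $(U_1,U_2)$ using $\Omega(\bn)=\overline{n}_{\bN P}(w\bn)$ yields the asymptotic $\langle\bn\rangle\sim\langle\bn_\Delta\rangle^{-1}$, so that $\mathcal V_\ell=\mathcal O(\langle\bn\rangle^{-\ell})$ translates to vanishing of order $\ell$ at the antidiagonal; hence $\mathcal V_\ell\circ\mathcal A$ extends smoothly to $\tilde{\mathcal V}_\ell\in C^\infty(S^2\mathbf M)$. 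Identity \eqref{eq:Ltilde-regularizing} then follows from \eqref{eq:L-regularizing} by pullback under $\mathcal A$, combined with Lemma~\ref{lem:pairing_coordinate} and the fact that the geodesic-derivative twist in the definition of $\tilde L$ produces no extra term on $vv^*$, since $(X+\lambda)v=(X-\lambda)v^*=0$ yields $X(vv^*)=0$; by continuity the identity extends from $S^2_\Delta\mathbf M$ to all of $S^2\mathbf M$. The main obstacle will be an explicit enough description of $\beta_\ell(\lambda)$ to confirm that its zero set lies in $-\rho-\N_0\alpha_0$, which is delicate in the nonabelian $\bN$ case (complex, quaternionic, octonionic hyperbolic spaces) because the $\g_{-2\alpha_0}$ root space contributes nontrivial higher-order corrections to the asymptotic of $LH$ that must be tracked through the recursion.
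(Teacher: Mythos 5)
Your overall strategy matches the paper's: view $L$ as a weighted Euler operator, compute $L^*$ acting on $e^{-(2\rho+\lambda)H}$ via the chain rule, peel off the asymptotically constant coefficient $\beta_\ell$ at each step, and deal with the antidiagonal via the Bruhat coordinates and $\Omega$. However, there are two genuine gaps and one numerical error.

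\textbf{(1) A factor-of-two error in the asymptotics of $LH$.} Since $\log\mc{N}(\bn)=\alpha_0(H(\bn))$ and the paper's explicit Helgason formula
$\mc{N}(\bn)=[(1+c\|U_1\|^2)^2+4c\|U_2\|^2]^{1/2}$ gives $L\log\mc{N}=L\mc{N}/\mc{N}\to 2$, one has $\alpha_0(LH(\bn))\to 2$, hence $LH(\bn)\to 2\cH$ (not $\cH$). Carrying your claimed limit through would give the wrong $\beta_1$; the correct value is $\beta_1(\lambda)=2(\check\rho+\check\lambda)$.

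\textbf{(2) The induction and the zero set of $\beta_\ell$ are not actually established.} You acknowledge this yourself as ``the main obstacle,'' but it is precisely the content of the lemma. The paper closes the induction by finding a small algebra stable under the relevant operator: with $\mc{Q}(\bn)=2+2c\|U_1\|^2$ one has $L\mc{N}/\mc{N}=2-\mc{Q}/\mc{N}^2$ and $(L+2)(\mc{Q}/\mc{N}^2)=2(\mc{Q}/\mc{N}^2)^2-4/\mc{N}^2$. Both $\mc{Q}/\mc{N}^2$ and $1/\mc{N}^2$ are $\mc{O}(\langle\bn\rangle^{-1})$ and the algebra they generate is stable under $L+\text{const.}$, so a clean induction gives $\mc{V}_\ell$ as a degree-$\ell$ homogeneous polynomial in these two functions, and $\beta_\ell(\lambda)=2(\check\rho+\check\lambda+\ell-1)$, which vanishes exactly on $-\rho-\N_0\alpha_0$. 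The ``nonabelian $\bN$'' worries you raise about $\g_{-2\alpha_0}$ corrections are resolved for free because the Helgason closed form already packages them into $\mc{N}$ and $\mc{Q}$; without introducing these explicit auxiliary functions, your recursion on ``indicial exponents'' has no way to produce the needed closed form for $\beta_\ell$.

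\textbf{(3) The reduction from $\tilde L$ to $L$ is incomplete.} It is not simply that $X(vv^*)=0$. One has $\mathcal A_*(vv^*)([g,\bn])=(vv^*)(gM)\,\mc{N}(\bn)^{-\check\lambda}$ (the $\Phi_+^\lambda$ factor), so $\tilde L$ picks up $-\check\lambda\,(L\mc{N}/\mc{N})$ from the $\mc{N}^{-\check\lambda}$ factor even though the $X$-derivative on the base vanishes. The paper's proof matches the coefficients $\big((2\check\rho+\check\lambda)L\mc{N}/\mc{N}-\tu{div}_{\intd\bn}(L)\big)$ appearing both in $\tilde L^*(vv^*)$ and in $L^*\mc{N}^{-(2\check\rho+\check\lambda)}$ to deduce the equivalence of \eqref{eq:Ltilde-regularizing} and \eqref{eq:L-regularizing}; your argument omits this matching. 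Finally, the smooth extension to $S^2\mathbf M$ can alternatively be seen globally (as in the paper) from the fact that $\tilde L$ is a smooth vector field on $S^2\mathbf M$ and all $\tilde{\mathcal V}_\ell$ are built from its divergence, without any coordinate computation.
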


Before we prove Lemma~\ref{lem:L-regularizing} let us show how it implies 
Lemma~\ref{lem:pairing_asympt_expansion}:
\begin{proof}[Proof of Lemma~\ref{lem:pairing_asympt_expansion}]
From \eqref{eq:partical_int} and \eqref{eq:Ltilde-regularizing} we get  
\[
 \prod_{\ell=0}^{\ell_0-1} 
 (\varepsilon\partial_\varepsilon - \beta_{\ell_0-\ell}(\lambda))I_0(\varepsilon) 
 = \int_{S^2\mathbf M} (vv^*) \cdot (\til{\mathcal V}_{\ell_0}(1-\chi_\varepsilon)) 
 \intd x\intd\eta_-\intd\eta_+.
\]
Recall that $\til{\mathcal V}_{\ell_0}$ vanishes to the order $\ell_0$ at the antidiagonal and $(1-\chi_\varepsilon)$ (identified with its pull-back $\mc{A}^*(1-\chi_\varepsilon)$) is supported in an $\varepsilon$-neighbourhood of the antidiagonal, thus 
$\|(\til{\mathcal  V}_{\ell_0}(1-\chi_\varepsilon))\|_{C(S^2{\bf M})} = \mc{O}(\varepsilon^{\ell_0})$. Furthermore we have $\|(1-\chi_\varepsilon)\|_{C^k(S{\bf M})} =\mathcal O(\varepsilon^{-k})$, and because $\til{\mathcal V}_{\ell_0}$ is vanishing to order $\ell_0$ at the antidiagonal and is also $C^\infty$, the Taylor formula implies that for any smooth $k$-th order differential operator $P$ on $S^2{\bf M}$ and $k<\ell_0$ the function $P\til{\mathcal V}_{\ell_0}$ vanishes to order $\ell_0-k$ at the antidiagonal. Putting everything together we get 
$\|(\til{\mathcal  V}_{\ell_0}(1-\chi_\varepsilon))\|_{C^{\ell_0 - 1}(S^2{\bf M})} = \mc{O}(\varepsilon)$. 
As $vv^*$ is a distribution of finite order we get that $\prod_{\ell=1}^{\ell_0} (\varepsilon\partial_\varepsilon - \beta_{\ell_0-\ell}(\lambda))I_0(\varepsilon) 
 = \mc{O}(\varepsilon)$ for sufficiently
large $\ell_0$ depending on the order of $vv^*$. Consequently, there exist $\alpha_\ell\in\C$ such that
\[
  \left|I_0(\varepsilon) - \sum_{\ell=1}^{\ell_0} \alpha_\ell \varepsilon^{\beta_\ell(\lambda)}\right|
  = \mc{O}(\varepsilon).
\]

Next let us  define for any $\lambda\in\C$ 
\[
 c_\varepsilon(\lambda) := \int_{\bN} e^{-(2\rho+\lambda)H(\bn)} \chi_\varepsilon(\bn)
 \intd\bn,
\]
which, by \cite[Ch IV.6]{Hel84}, converges for 
$\tu{Re}(\lambda)>-\rho$ to $c(\lambda+\rho)$. While the meromorphic 
continuation of the Harish-Chandra c-function is well-known, we also need a precise asymptotic expansion in powers of $\varepsilon$.
To that end we consider \eqref{eq:L-regularizing} which implies
\begin{equation}\label{eq:c-fct_regularizing}
 \prod_{\ell=0}^{\ell_0-1} 
 (\varepsilon\partial_\varepsilon - \beta_{\ell_0-\ell}(\lambda))c_\varepsilon(\lambda) 
 = \int_{\bN} e^{-(2\rho+\lambda)H(\bn)} \mathcal V_{\ell_0}(\bn)\chi_\varepsilon(\bn)\intd\bn.
\end{equation}
From \cite[Thm IX.3.8]{Hel78} we know that
$e^{-(2\rho+\lambda)H(\bn)}  = \mc O(\langle\bn\rangle^{-(2\rho+\tu{Re}(\lambda))/\|\alpha_0\|})$. 
Thus, if we fix $C>0$, then for sufficiently large $\ell_0$ the right side of \eqref{eq:c-fct_regularizing} converges to a function $h(\lambda)$
holomorphic on $\{\tu{Re}(\lambda) >-C\}$:
\[
\prod_{\ell=0}^{\ell_0-1} 
 (\varepsilon\partial_\varepsilon - \beta_{\ell_0-\ell}(\lambda))c_\varepsilon(\lambda) 
 = h(\lambda) +\mc{O}(\varepsilon),
\]
and consequently there exist $\alpha'_\ell\in\C$ such that
\[
\left|c_\varepsilon(\lambda) - 
\frac{h(\lambda)}{\prod_{\ell = 1}^{\ell_0} \beta_\ell(\lambda)}
-\sum_{\ell=1}^{\ell_0} \alpha'_\ell \varepsilon^{\beta_\ell(\lambda)}\right| = \mc{O}(\varepsilon).
\]
As we know that the integral defining $c_\varepsilon(\lambda)$ converges for $\tu{Re}(\lambda)$ large 
enough, we get by uniqueness of meromorphic continuation that 
$\frac{h(\lambda)}{\prod_{\ell = 1}^{\ell_0} \beta_\ell(\lambda)} = c(\lambda+\rho)$.
This yields \eqref{eq:pairing_asympt_expansion_c}.
\end{proof}

The only remaining task is to prove Lemma~\ref{lem:L-regularizing}

\begin{proof}[Proof of Lemma~\ref{lem:L-regularizing}]
In order to simplify notation let us define $\mc{N}(\bn):= e^{\alpha_0 H(\bn)}$, $\check \rho:=\rho/\|\alpha_0\|$ and $\check\lambda:=\lambda/\|\alpha_0\|$.
For $\bn=\exp(U_1+U_2)$, we then have the explicit expression 
 \cite[Thm IX.3.8]{Hel78}
 \begin{equation}
  \label{eq:X_coordinates}
  \mc{N}(\bn) = \left[(1+c\|U_1\|^2)^2 + 4c\|U_2\|^2\right]^{1/2},
 \end{equation}
 where $c=(4m_{\alpha_0} + 16m_{2\alpha_0})^{-1}$. Thus, we have 
 $\mc{N}(\bn) \asymp \langle\bn\rangle$ for $\bn\to\infty$.
 
 We first want to prove the existence of $\beta_\ell$ and $\mathcal V_\ell$ fullfilling \eqref{eq:L-regularizing} by induction. In order to study \eqref{eq:L-regularizing} for $\ell=1$ we 
 compute 
 \[
  L^*\mc{N}^{-(2\check\rho + \check\lambda)} = \left((2\check \rho+\check \lambda)
  \frac{L\mc{N}}{\mc{N}} -
  2\check \rho\right)\mc{N}^{-(2\check\rho +\check \lambda)}.
 \]
 Here we used the fact that $(\log)_*\intd \bn$ is the Lebesgue measure on $\n_-$ and from the expression of $L$ as an Euler operator \eqref{eq:L_Euler}, we get $\tu{div}_{\intd\bn}(L) = 2\check\rho$.
We are thus lead to 
 study the properties of $(L\mc{N})/\mc{N}$. Using \eqref{eq:X_coordinates}
 we get 
 \begin{equation}\label{LN/N}
  \frac{L\mc{N}}{\mc{N}} = 2-\frac{\mc{Q}}{\mc{N}^2}, 
 \end{equation}
where $\mc{Q}(\bn) = 2+2c\|U_1\|^2$.
Putting everything together we get 
\begin{equation}
 \label{eq:lstarN}
 L^*\mc{N}^{-(2\check\rho+\check\lambda)} = \left(
 2(\check\rho + \check\lambda) - (2\check\rho + \check\lambda) \frac{\mc{Q}}{\mc{N}^ 2}
 \right)\mc{N}^{-(2\check\rho+\check\lambda)}.
\end{equation}
We note that $\frac{\mc{Q}}{\mc{N}^ 2} = \mc{O}(\langle \bn\rangle^{-1})$ for $\bn\to\infty$. 
Thus, we have found $\beta_1(\lambda) = 2(\check\rho + \check\lambda)$ and 
$\mathcal V_{1,\lambda}=-(2\check \rho + \check\lambda)\frac{\mc{Q}}{\mc{N}^ 2}$.
In order to examine \eqref{eq:L-regularizing} for $\ell=2$ we additionally need to compute $L\frac{\mathcal Q}{\mathcal N^2}$. A straightforward computation yields
\[
 (L+2)\frac{\mc{Q}}{\mc{N}^2} = 
 2\left(\frac{\mc{Q}}{\mc{N}^2}\right)^2 -4\frac{1}{\mc{N}^2}.
\]
First of all, both terms on the right hand side are $\mc{O}(\langle\bn\rangle^{-2})$. This observation directly implies that $\mathcal V_2$ is $\mathcal O(\langle\bn\rangle^{-2})$. Secondly on the right hand side, only powers of $\frac{\mc{Q}}{\mc{N}^2}$ and $\frac{1}{\mc{N}^2}$ appear. This allows for a straightforward induction procedure because all appearing $\mc V_\ell$ are some homogeneous polynomials of degree $\ell$ in the variable $(\mc{Q}/\mc{N}^2,1/\mc{N}^2)$ with coefficients polynomial in $\check\la$. Furthermore one checks that $\beta_{\ell}(\la)=2(\check\rho + \check\lambda+\ell-1)$. This establishes the existence of $\beta_\ell, \mc V_\ell$ fulfilling \eqref{eq:L-regularizing}.

Let us now turn to \eqref{eq:Ltilde-regularizing}. We note that, as $\til L$ is a smooth 
 differential operator, we can compute its divergence on $S^2_\Delta\mathbf M$.
 Using \eqref{eq:push_forward_measure} as well as the fact that $\intd \mu_{\tu{L}}$
 is preserved by the geodesic flow, we compute
 \[
  \tu{div}(\til L) = -2\check\rho\frac{L\mc{N}}{\mc{N}} + \tu{div}_{\intd \bn}(L).
 \]
 Furthermore, using $(\mathcal A_*(vv^*))([g,\bn]) = (vv^ *)(gM) \mc{N}^{-\check \lambda}(\bn)$
 as well as the description of $\til L$ near the antidiagonal \eqref{eq:L_near_diag},  we obtain
 $\til L (vv^*) = -\check\lambda \frac{L\mc{N}}{\mc{N}} (vv^*)$, whence
 \[
  \til L^*(vv^*) = \left((2\check \rho+\check \lambda)\frac{L\mc{N}}{\mc{N}} -
  \tu{div}_{\intd \bn}(L)\right)(vv^*).
 \]
 Comparing this equation to \eqref{eq:lstarN} and recalling $\tu{div}_{\intd \bn}(L)=2\check\rho$ we directly see that we are led to the same recursion as for \eqref{eq:L-regularizing} and consequently $\til{\mc V}_\ell = \mc V_\ell\circ\mc A$ on $S^2_\Delta{\bf M}$. 
The existence of a smooth extension to $S^2\mathbf M$ is obvious
by the following global argument: 
$\til L$ is a smooth vector field 
thus its divergence is a smooth function, and all 
$\til{\mathcal V}_{\ell}$ are built out of derivatives of this
divergence. The smoothness can, however, also be seen in the coordinates
$n,a, k,\bn_\Delta$ around the antidiagonal, which were introduced in the proof
of Lemma~\ref{lem:smooth_extension}. In these coordinates
\[
 \mc{N} = e^{\alpha_0 H(\bn_{\bN P}(w\bn_\Delta))} = 
 e^{\alpha_0 (H(\bn_\Delta) - B(w\bn_\Delta))}
\]
where $B=\log(a_{\bN MAN})$. Now for $\bn_\Delta = \exp(V_1+V_2)$,
$V_1\in\g_{-\alpha_0}, V_2\in\g_{-2\alpha_0}$ \cite[Thm IX.3.8]{Hel78}
gives us
\[
 e^{\alpha_0 B(w\bn_\Delta))} = [c^2\|V_1\|^4 + 4c\|V_2\|^2]^{\frac{1}{2}}
\]
and with a completely analogous computation as above we check the 
vanishing of the iteratively defined $\til{\mathcal V}_{\ell}$.
\end{proof}

\section{Equidistribution for Ruelle resonant states}
Let us finally draw the desired conclusions concerning
the high frequency limits of invariant Ruelle distributions from the explicit relations between
Ruelle resonant states and Patterson-Sullivan distributions.

For $\mc{M} = \Gamma\backslash  G/M=S{\bf M}$ a number $\la=-\rho+\mu \in \C$ is a Ruelle
resonance of the first band if and only if the complex conjugate $\bbar{\la}$
is a Ruelle resonance of the first band as well. This follows
from the fact that the generating vector field $X$ commutes
with complex conjugation. By this symmetry of the spectrum it is enough to consider
first band resonances with $\Im(\la)\geq 0$, and since we are interested in high frequency limits, we take $\Im(\mu)>0$. We thus denote by $\la_n=-\rho+ir_n\in \C$ a sequence
of Ruelle resonances in the first band with $r_n >0$ and $r_{n+1}> r_n$. We do not want to
repeat them according to multiplicity but rather work with the multiplicities
$m_n := \dim(\tu{Res}^0_{X}(\la_n))$. Now to any subsequence
$(r_{k_n})_{n>0} \subset (r_{n})_{n>0}$ we can associate the sequence of
invariant Ruelle distributions $\mathcal T_{\la_{k_n}} \in \mc{D}'(\mc{M})$
and we study the weak limits of these sequences.

Then we obtain the following reformulation of Theorem~\ref{thm: Main Theorem}.

\begin{thm}\label{thm:hyperclassical}
 Let $\mc{M} = \Gamma\backslash G/M=S{\bf M}$ the unit tangent  bundle of a
compact locally Riemannian symmetric space of rank one and $\intd\mu_{\tu{L}}$ the Liouville measure.
Then there is a subsequence $(r_{k_n})_{n>0} \subset (r_{n})_{n>0}$
of density one, i.e. with
\[
 \lim_{N\to\infty}\frac{\sum_{k_n<N} m_{k_n}}{\sum_{n<N} m_{n}}=1,
\]
such that $\mathcal T_{r_{k_n}}$ converges weakly to $\intd \mu_{\tu{L}}$ in 
$\mc{D}'(\mathcal M)$ as $n\to\infty$.
\end{thm}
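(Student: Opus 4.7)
The plan combines three ingredients already established: the Corollary following Theorem~\ref{thm:pairing_formula}, the quantum-classical correspondence (Theorem~\ref{thm:intertwining} together with the absence of Jordan blocks from Theorem~\ref{thm:jordan}), and the classical Shnirelman--Zelditch--Colin de Verdi\`ere theorem applied to $\Delta_{\mathbf M}$.

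First I would rewrite each invariant Ruelle distribution as an average of diagonal Patterson--Sullivan distributions. By the Corollary to Theorem~\ref{thm:pairing_formula}, if $\varphi_{n,1},\ldots,\varphi_{n,m_n}$ is any $L^2$-orthonormal basis of $\Eig_{\Delta_{\mathbf M}}(ir_n)$, then
\[
\mathcal T_{-\rho+ir_n} \;=\; \frac{c(ir_n)}{m_n}\sum_{l=1}^{m_n} \operatorname{PS}_{\varphi_{n,l},\varphi_{n,l}}.
\]
Theorem~\ref{thm:intertwining} (combined with Theorem~\ref{thm:jordan}) guarantees that $m_n$ coincides with $\dim \Eig_{\Delta_{\mathbf M}}(ir_n)$ for $r_n>0$, so counting first-band Ruelle resonances with multiplicity matches counting Laplace eigenvalues with multiplicity; the density-one statement we want is therefore equivalent to a density-one statement about the orthonormal family $\{\varphi_{n,l}\}$.

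Next I would compare Patterson--Sullivan and Wigner distributions in the semiclassical limit $r_n\to\infty$. The Anantharaman--Zelditch comparison for hyperbolic surfaces \cite{AZ07}, extended to higher rank in \cite{HHS12}, asserts that for any $a\in C^\infty(\mathcal M)$,
\[
c(ir_n)\operatorname{PS}_{\varphi_{n,l},\varphi_{n,l}}(a) \,-\, W_{\varphi_{n,l}}(a) \;\longrightarrow\; 0 \quad\text{as } r_n\to\infty,
\]
where $W_{\varphi_{n,l}}$ denotes a standard semiclassical Wigner distribution on $\mathcal M$ at frequency $r_n$. Since the geodesic flow on $\mathcal M=S\mathbf M$ is Anosov and hence ergodic for $\intd\mu_{\tu L}$, the Shnirelman--Zelditch--Colin de Verdi\`ere theorem applied to $\Delta_{\mathbf M}$ yields, for any choice of orthonormal basis, a density-one subsequence (counted with multiplicity) along which $W_{\varphi_{n,l}}\to \intd\mu_{\tu L}/\mu_{\tu L}(\mathcal M)$ weakly. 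Combining this with the two displays above, a diagonal extraction gives a density-one subsequence $(r_{k_n})$ of Ruelle resonances along which $\mathcal T_{-\rho+ir_{k_n}}\to \intd\mu_{\tu L}$ weakly, after fixing normalizations via $\operatorname{PS}_{\varphi,\varphi}[1]=\langle\varphi,\varphi\rangle_{L^2}/c(ir_n)$.

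The main obstacle is the bookkeeping between two notions of density. Quantum ergodicity naturally produces a density-one subset $S\subset\{(n,l):1\leq l\leq m_n\}$ of basis indices, whereas Theorem~\ref{thm:hyperclassical} requires a density-one subsequence $(k_n)$ of frequency indices with $\mathcal T_{-\rho+ir_{k_n}}$ converging as the full average over all $m_{k_n}$ Patterson--Sullivan terms. One handles this by invoking QE in its eigenspace-averaged form: choosing the basis $\{\varphi_{n,l}\}$ compatibly inside each eigenspace so that $\tfrac{1}{m_n}\sum_l W_{\varphi_{n,l}}\to \intd\mu_{\tu L}$ on a density-one set of $n$. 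A secondary technical point is uniformity in $l$ (for fixed $n$) of the $o(1)$ in the Patterson--Sullivan--Wigner comparison, which can be obtained from its proof in \cite{HHS12} using that the eigenspace dimensions grow only polynomially in $r_n$ by Weyl's law. Once these bookkeeping issues are settled, all remaining ingredients are by now standard.
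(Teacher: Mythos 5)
Your proposal is correct and follows essentially the same route as the paper: express $\mathcal T_{-\rho+ir_n}$ via the Corollary to Theorem~\ref{thm:pairing_formula} as an average of Patterson--Sullivan distributions (the paper's proof re-derives this in place rather than citing the Corollary, but the content is identical), invoke the quantum-classical correspondence and Theorem~\ref{thm:jordan} to match multiplicities, use the Patterson--Sullivan/Wigner asymptotic equivalence from \cite{HHS12}, and apply Shnirelman--Zelditch--Colin de Verdi\`ere together with a density bookkeeping argument. The paper handles the bookkeeping you flag by explicitly extracting the subsequence $\{k_n : m_{k_n}^{\mathrm{bad}}/m_{k_n} < \epsilon_{k_n}\}$ and bounding the remaining bad Wigner terms by an elementary $L^2$ estimate, rather than appealing to an eigenspace-averaged form of QE or to polynomial growth of multiplicities via Weyl's law.
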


\begin{proof}
According to Theorem~\ref{thm:intertwining} we can associate to each Ruelle
resonance $\la_n=-\rho+ir_n$ a spectral parameter $\mu_n:=ir_n \in \a^*_\C$ of the Laplacian  such that $\dim(\tu{Eig}_{\Delta_{\bf M}}(\mu_n))=\dim(\tu{Res}^{0}_{X}(\la_n)) 
= m_n$.
For each of these $\Delta_{\bf M}$-eigenspaces we choose an orthonormal basis $\varphi_{n,l}$
of real-valued functions, where $l=1,\ldots,m_n$. Let $B= \{\varphi_{n,l}\}$ be the set of all these basis
vectors. Then the quantum ergodicity theorem of Shnirelman-Zelditch-Colin de Verdière
\cite{Shn74, CdV85, Zel87} implies the existence of a density one subsequence such that
the Wigner distributions $\mathcal W_{\phi, n}$ (see Appendix~\ref{app:wigner} for a precise definition) converge towards the Liouville measure. More precisely, we can
split $B$ into the disjoint union $B=B^\tu{good}\cup B^\tu{bad}$ such that for any sequence in
$B^\tu{good}$ the Wigner distributions converge towards the Liouville measure and
\[
 q_N:= \frac{\sum_{n\leq N} m_n^\tu{bad} }{\sum_{n\leq N} m_n}\to 0,~\tu{as }N\to\infty,
\]
where $m_n^\tu{bad}:=\# (B^\tu{bad}\cap\tu{Eig}_{\Delta_{\bf M}}(ir_n))$. In
order to obtain the subsequence $r_{k_n}$ for the convergence of the invariant Ruelle
distributions, we remove from the full sequence $(r_n)_{n>0}$ all elements for which
$m_n^\tu{bad}/m_n \geq \eps_n$, where $\eps_n:=(\sup_{k\geq n}q_k)^{1/2}$ is a decreasing sequence converging to $0$. Thus, we obtain a subsequence $(r_{k_n})_{n\geq0}$
for which we have 
\[ \frac{\sum_{n< N} m_{n}-\sum_{k_n\leq N}m_{k_n}}{\sum_{n\leq N}m_n}
\leq \frac{q_N}{\eps_N}\leq \sqrt{q_N}\]
or, equivalently, 
\[ \frac{\sum_{k_n\leq N}m_{k_n}}{\sum_{n\leq N}m_n}\geq 1-\sqrt{q_N}\to 1.\]
Let us finally show the convergence of the subsequence of invariant Ruelle distributions:
we fix a Ruelle resonance $\la_{k_n}$. Then, using the basis $\varphi_{k_n,l}$ and the isomorphism
$\mathcal I_-(ir_{k_n})$,
we define a basis $u_l:= \mathcal I_-(\mu_{k_n})(\varphi_{k_n, l}) \in \tu{Res}_{X}(\la_{k_n})$.
Theorem~\ref{thm:intertwining}
implies that $u_l$ is a basis of $\tu{Res}^0_{X}(\la_n)$. In addition we define the basis
of co-resonant states 
\[u_l^*:= \frac{1}{\operatorname{PS}^\Gamma_
{\varphi_{k_n,l},\varphi_{k_n,l}}(1)}\mathcal I_+(\mu_{k_n})(\varphi_{k_n,l})\in
\tu{Res}_{X^*}(\la_{k_n}).\]
The pairing of co-resonant states and resonant states is simply given by the
pairing of distributions with disjoint wavefront sets, which in turn is the product of the distributions paired with $1$. We recall the pairing formula of Theorem~\ref{thm:pairing_formula}: let $\lambda \in \a_\C^*$ and for $v\in \tu{Res}_{X}^0(\lambda)$, 
$v^*\in \tu{Res}_{X^*}^0(\lambda)$. Then one has the identity
\begin{equation} \label{eq:pairing}
\langle\pi_{0*} v^*, \overline{\pi_{0*}v} \rangle_{L^2(\Gamma\backslash G/K)} = c(\lambda + \rho)\cdot (v v^*)[1_{S\mathbf M},]
\end{equation}
where $c(\mu) = \int_N e^{-(\mu + \rho)H(nw)}\intd n$ is the Harish-Chandra c-function. 

Thus, the pairing formula together with the chosen normalization of $u_i^*$ implies that $u_i^*[u_j]=\delta_{ij}$,
which means, that the basis $u_i^*$ is dual to the chosen basis $u_i$. By Theorem~\ref{thm:jordan} we know that there are no Jordan blocks
at the spectral parameter $\la_{k_n}$, and thus the spectral projector can be written
as $\hat \Pi_{k_n} = \sum_{l=1}^{m_{k_n}} u_l\otimes u_l^*$. Now for $f\in C^{\infty}(\mathcal M)$
we obtain that
\[
 \mathcal T_{k_n}[f] = \frac{1}{m_{k_n}} \sum_{l=1}^{m_{k_n}} u_l^*[f\cdot u_l]
 = \frac{1}{m_{k_n}}\sum_{l=1}^{m_{k_n}} \frac{\operatorname{PS}_{\varphi_{k_n,l}, \varphi_{k_n,l}}[f]}{\operatorname{PS}_{\varphi_{k_n,l}, \varphi_{k_n,l}}[1]}
\]
If $\til f\in C_c^\infty(T\bf{M})$ is an arbitrary compactly
supported function such that $\til f|_{S{\bf M}}=f$, then
by \cite{HHS12} 
\[
 \mathcal T_{k_n}[f] =\frac{1}{m_{k_n}} \sum_{l=1}^{m_{k_n}} \mathcal W_{\varphi_{k_n,l}}[\til f]
 + \mathcal O\left(\frac{1}{r_{k_n}}\right).
\]
Note that there can still appear some Wigner distributions from the bad eigenfunctions
$\varphi_{k_n}$ that do not converge towards the Liouville measure. However, by the
choice of the subsequence, there are only few of them and we can write
\[
 \mathcal T_{k_n}[f] =\frac{1}{m_{k_n}}\left( \sum_{\varphi_{k_n, l} \in B^\tu{good}\cap \tu{Eig}_{\Delta_\Gamma}(\mu_{k_n})} \mathcal W_{\varphi_{k_n,l}}[\til f]
 + \sum_{\varphi_{k_n, l} \in B^\tu{bad}\cap \tu{Eig}_{\Delta_\Gamma}(\mu_{k_n})} \mathcal W_{\varphi_{k_n,l}}[\til f] \right)
 + \mathcal O\left(\frac{1}{r_{k_n}}\right).
\]
The Wigner distributions in the first sum converge towards the Liouville measure
and the contributions of the second sum can be bounded with a standard $L^2$ estimate
(see e.g. \cite[Theorem 5.1]{Zw12}) by
\[
 |\mathcal W_{\varphi_{k_n,l}}[\til f]|\leq C\sup|\til f| + \mathcal O(1/r_{k_n}).
\]
Thus
\[
\mathcal T_{k_n}[f] =\frac{1}{m_{k_n}}\left( \sum_{\varphi_{k_n, l} \in B^\tu{good}\cap \tu{Eig}_{\Delta_\Gamma}(\mu_{k_n})} \mathcal W_{\varphi_{k_n,l}}[\til f]
 +  \right) + \frac{m_{k_n}^\tu{bad}}{m_{k_n}}C\sup|\til f|
 + \mathcal O\left(\frac{1}{r_{k_n}}\right).
\]
But as we have chosen the subsequence such that
$\frac{m_{k_n}^\tu{bad}}{m_{k_n}} \to 0$, we conclude that
in the limit $n\to\infty$ only the ``good'' Wigner distributions contribute and
we obtain
\[
\mathcal T_{k_n}[f] \to \int_{S\mathbf M} f \intd \mu_{\tu{L}},
\]
concluding the proof.
\end{proof}

\appendix
\section{Wigner distributions}\label{app:wigner}
\begin{Def}
 \label{def:wigner}
 Let $r>0, \mu=ir$ and $\varphi \in \tu{Eig}_{\Delta_{\mathbf M}} (\mu)$ be an $L^2$-normalized eigenfunction
 of the Laplacian. Then we define the associated \emph{
 Wigner-distribution} \footnote{Although they are commonly called distributions, strictly speaking, Wigner distributions are by definition generalized densities.
 However, the space $C^{-\infty}(T^*\mathbf M)$ of generalized densities can be identified with the space $\mathcal D'(T^*\mathbf M)$ of distributions via the canonical measure on $T^*\mathbf M$.}
 $\mathcal W_\varphi\in C^{-\infty}(T^*\mathbf M)$
 as follows:
 \[
  \mathcal W_\varphi:\Abb{C_c^\infty (T^*\mathbf M)}{\C}{a}
  {\langle \tu{Op}_{1/|\mu|} (a) \varphi, \varphi\rangle_{L^2}}
 \]
Here $\langle\bullet,\bullet\rangle_{L^2}$ is the $L^2$-scalar product on $\mathbf M$ and
for any $\hbar >0$, $\tu{Op}^w_\hbar (a)$ is a bounded operator on $L^2(\mathbf M)$ obtained by a semiclassical Weyl quantization (see e.g. \cite{Zw12},\cite[Appendix E]{DZ18}).
\end{Def}
In quantum ergodicity one is then interested in understanding the weak limits of
these generalized densities. They have the following important properties.
\begin{prop}
 \label{prop:semiclassical_measures}
 Let $r_n>0$ be the positive real numbers, such that 
 $\tu{Eig}_{\Delta_{\mathbf M}} (ir_n)\neq 0$,
 where we repeat $r_n$ according to the multiplicity  (dimension of
 $\tu{Eig}_{\Delta_{\mathbf M}} (ir_n)$), and let $\psi_n\in \tu{Eig}_{\Delta_{\mathbf M}} (ir_n)$ be an associated $L^2$-normalized eigenfunction. If there is
 $\mu\in C^{-\infty}(T^*\mathbf M)$ as well as a subsequence $r_{n_k}$
 such that $\mathcal W_{\psi_{n_k}}\to \mu$ weakly, then $\mu$ is a positive Radon
 measure that is supported on $S^*\mathbf M$ and we call it a
 \emph{semiclassical measure}.
\end{prop}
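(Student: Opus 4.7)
The plan is the standard two-step semiclassical argument: positivity via Gårding, and support localization by semiclassical ellipticity off the characteristic variety. I treat the two conclusions separately.

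\emph{Positivity and the Radon representation.} Set $\hbar_k := 1/r_{n_k}$, and let $a\in C_c^\infty(T^*\mathbf M)$ with $a\geq 0$. The sharp Gårding inequality on $\mathbf M$ provides a constant $C_a>0$ such that
\[
\langle \Op^w_{\hbar}(a) u, u\rangle_{L^2(\mathbf M)} \geq -C_a \hbar \, \|u\|_{L^2}^2
\]
for every $\hbar\in(0,1]$ and every $u\in L^2(\mathbf M)$. Applied to the $L^2$-normalized $u=\psi_{n_k}$ with $\hbar=\hbar_k$, passing to the limit $k\to\infty$ yields $\mu(a)=\lim_k \mathcal W_{\psi_{n_k}}(a)\geq 0$. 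Hence $\mu$ is a positive distribution on $T^*\mathbf M$, and Schwartz's representation theorem for positive distributions identifies it with a positive Radon measure.

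\emph{Support in $S^*\mathbf M$.} Since $\Delta_{\mathbf M}$ is the positive Laplacian, every $\psi_n\in \tu{Eig}_{\Delta_{\mathbf M}}(ir_n)$ satisfies $\Delta_{\mathbf M}\psi_n=(\rho^2+r_n^2)\psi_n$. With $\hbar_n:=1/r_n$ this rewrites as $P(\hbar_n)\psi_n=0$ for the self-adjoint semiclassical differential operator
\[
P(\hbar) := \hbar^2 \Delta_{\mathbf M} - 1 - \hbar^2\rho^2,
\]
whose semiclassical principal symbol $p(x,\xi) = |\xi|_g^2 - 1$ has characteristic set exactly $S^*\mathbf M$. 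Given $a\in C_c^\infty(T^*\mathbf M\setminus S^*\mathbf M)$, the symbol $p$ is elliptic on $\supp(a)$, so semiclassical division on a neighbourhood of $\supp(a)$ produces $b\in C_c^\infty(T^*\mathbf M)$ and an $L^2$-bounded family $R(\hbar)$ with
\[
\Op^w_\hbar(a) = \Op^w_\hbar(b)\, P(\hbar) + \hbar R(\hbar).
\]
Pairing with $\psi_{n_k}$, using $P(\hbar_k)\psi_{n_k}=0$ together with $\|\psi_{n_k}\|_{L^2}=1$, gives
\[
\mathcal W_{\psi_{n_k}}(a) = \hbar_k\,\langle R(\hbar_k)\psi_{n_k},\psi_{n_k}\rangle_{L^2} = O(\hbar_k) \longrightarrow 0.
\]
Therefore $\mu(a)=0$, whence $\supp(\mu)\subset S^*\mathbf M$.

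\emph{Main obstacle.} There is no substantial obstacle: the only delicate point is reconciling the intrinsic operator $\hbar^2\Delta_{\mathbf M}$ with a Weyl quantization on the manifold, which contributes subprincipal corrections to the identity $\hbar^2\Delta_{\mathbf M}=\Op^w_\hbar(|\xi|_g^2)+\ldots$. Since the division argument only needs accuracy $O(\hbar)$ in $L^2$ operator norm, these corrections are absorbed into the remainder $R(\hbar)$ by the standard manifold Weyl calculus (as used, for instance, in Zworski's book and in Dyatlov--Zworski, Appendix~E, on which Definition~\ref{def:wigner} already relies).
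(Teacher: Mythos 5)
Your argument is correct and is precisely the standard semiclassical-measure argument to which the paper's proof refers (it simply cites Zworski, Chapters 5 and 15, without spelling out the details): sharp Gårding plus the Riesz–Schwartz theorem for positivity, and semiclassical ellipticity of $\hbar^2\Delta_{\mathbf M}-1-\hbar^2\rho^2$ off the cosphere bundle for the support statement. You also correctly flag the only delicate point — absorbing the lower-order discrepancy between $\hbar^2\Delta_{\mathbf M}$ and $\Op^w_\hbar(|\xi|_g^2)$ into the $O(\hbar)$ remainder — so there is nothing to add.
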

\begin{proof}
 The fact that the semiclassical limits are positive Radon measures follows from
 a standard compactness argument (see e.g. \cite[Chapter 5]{Zw12}). For the support
 on $S^*\mathbf M$ see e.g. \cite[Chapter 15]{Zw12}.
\end{proof}
\section{Transformation formulas}

In this appendix we prove the transformation formulas used in the proof of Lemma~\ref{lem:pairing_coordinate}.
 
Let $\intd(gMA)$ be the $G$-invariant measure on $G/MA$ which is uniquely 
determined\footnote{Recall that the measures $\intd g,\intd a\intd m$ have been fixed 
in Section~\ref{sec:rieman_sym_space}.}
by the condition that for all $f\in C_c^\infty(G)$ we have
\begin{equation}\label{eq:dgMA}
\int_G f(g)\intd g
=\int_{G/MA} \int_M \int_A f(gma)\intd a\intd m\intd(gMA).
\end{equation}
The following lemma gives two helpful expressions for $\intd (gMA)$
\begin{lem}\label{lem:dgMA}
\begin{itemize}
\item[(i)]
There is a constant $c_1$ such that for all $f\in C_c(G/MA)$
\[
 \int_{G/MA} f(gMA) \intd(gMA) = c_1 \int_K\int_{N} f(knMA) \intd n\intd k
\]
\item[(i)]
There is a constant $c_2$ such that for all $f\in C_c(G/MA)$
\[
 \int_{G/MA} f(gMA) \intd(gMA) = c_2 \int_N \int_{\bN} f(n\bn MA) \intd \bn\intd n
\]
\end{itemize}
\end{lem}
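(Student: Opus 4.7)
The plan is to identify each right-hand side, up to a positive multiplicative constant, as a $G$-invariant Radon measure on $G/MA$, and to conclude by uniqueness (up to positive scalar) of such measures. Both formulas are then instances of classical integration identities in the spirit of Chapter~I, §5 of \cite{Hel84}.

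For (i) I would start from the Iwasawa decomposition $G=KAN$. Since $A$ normalizes $N$, one has $an=(ana^{-1})a$, whence $kanMA=k(ana^{-1})MA$. Thus the map $\Phi\colon K\times N\to G/MA$, $(k,n)\mapsto knMA$, is surjective, with fibers the orbits of the free diagonal $M$-action $(k,n)\cdot m = (km, m^{-1}nm)$. The product Haar measure $\intd k\,\intd n$ is invariant under this $M$-action (bi-invariance of $\intd k$ together with $\Ad(M)$-invariance of $\intd n$), so it descends to a Radon measure $\mu_1$ on $G/MA$. Its $G$-invariance follows from the classical cancellation: writing $g_0k=\kappa(g_0k)a(g_0k)n_1(g_0k)$ in Iwasawa form, the $\Ad(A)$-Jacobian $e^{-2\rho\log a(g_0k)}$ arising from the $N$-substitution $n\mapsto a(g_0k)n_1(g_0k)^{-1}\tilde n\,a(g_0k)^{-1}$ balances exactly the Radon--Nikodym cocycle $e^{2\rho H(g_0^{-1}\kappa(g_0k))}$ for the $G$-action on $K/M$ (cf.\ \cite[Lemma~I.5.19]{Hel84}). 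Uniqueness then yields the constant $c_1$.

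For (ii) I would use the open Bruhat decomposition. Conjugating the open cell $\bN MAN$ by a representative of the nontrivial Weyl group element $w_0$ shows that $N\bN MA=w_0(\bN MAN)w_0^{-1}$ is itself open and dense in $G$, with complement of measure zero, and that the product map $N\times\bN\times M\times A\to N\bN MA$ is a diffeomorphism. Hence $\Psi\colon N\times\bN\to G/MA$, $(n,\bn)\mapsto n\bn MA$, is a diffeomorphism onto an open dense subset (injectivity follows from $\bN\cdot MA\cap N=\{e\}$). By bi-invariance of $\intd n$ and $\intd\bn$, together with the same cocycle-cancellation mechanism as in (i), the pushforward $\Psi_*(\intd n\,\intd\bn)$ is a $G$-invariant Radon measure on $G/MA$, hence equal to $c_2\intd(gMA)$.

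The main obstacle is the careful bookkeeping of Jacobian factors, in particular the delicate cancellation in (i) between the $\Ad(A)$-dilation on $\n$ and the Iwasawa cocycle on $K/M$; for (ii) one must additionally justify that the image of $N\times\bN$ covers $G/MA$ up to a null set, which follows from the openness and density of the big Bruhat cell.
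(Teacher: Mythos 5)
Your argument is correct, but it takes a somewhat different route from the paper. You identify each right-hand side as a $G$-invariant Radon measure on $G/MA$ (via the quotient structure of $K\times N$ by the diagonal $M$-action in (i), and via the open Bruhat cell in (ii)), and then invoke uniqueness of $G$-invariant measures on $G/MA$ — a uniqueness which holds because $G$ is unimodular and $MA$ is unimodular, a fact worth stating explicitly. The burden of your approach is verifying $G$-invariance, which in (i) requires the cancellation between the $\Ad(A)$-Jacobian on $N$ and the Iwasawa cocycle on $K/M$ that you sketch, and in (ii) is cleanest if one observes that $\intd n\,\intd\bn\,\intd m\,\intd a$ on the open dense cell $N\bN MA$ is (up to a constant) the restriction of Haar measure on $G$, hence equal to Haar measure since the complement is null. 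The paper instead avoids the invariance check entirely: for (i) it fixes a bump function $\chi$ on $A$ with $\int_A\chi=1$, lifts $f$ to $\tilde f(k_{KNA}(g)n_{KNA}(g))\chi(a_{KNA}(g))$ on $G$, and applies Helgason's $G=KNA$ integral formula \cite[Cor.~I.5.3]{Hel84} directly; for (ii) it applies Helgason's Bruhat integral formula \cite[Prop.~I.5.21]{Hel84}. Both methods rest on the same decompositions of $G$; the paper's computation is more concrete and shorter, while your uniqueness argument is structurally illuminating but leaves more Jacobian bookkeeping implicit.
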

\begin{proof}
 (i) Let $f\in C_c^\infty(G/MA)$ and fix $\chi\in C_c^\infty(A)$ such that 
 $\int_A\chi(a)\intd a =1$. Using the $KNA$ decomposition we can find a 
 function $\til f\in C_c^\infty(KN)^M$ such that 
 $f(gMA) = \til f(k_{KNA}(g)n_{KNA}(g))$. Then by \eqref{eq:dgMA}
 we and \cite[Cor.~I.5.3]{Hel84} we get 
 \begin{eqnarray*}
  \int_{G/M} f(gM) &=& \int_G \til f(k_{KNA}(g)n_{KNA}(g))\chi(a_{KNA}(g))\intd g\\
  &=&c_1\int_K\int_N\int_A \til f(kn)\chi(a)\intd a\intd n \intd k\\
  &=&c_1\int_K\int_N f(knMA)\intd n\intd k
 \end{eqnarray*}
 
(ii) The Bruhat decomposition of $G$ implies that 
the group multiplication induces a diffeomorphism 
$N\times \overline N\times M\times A\to N\overline NMA$
with $N\overline NMA\subseteq G$ open dense and complement of measure zero. 
It is thus enough to prove the equality for $f\in C_c^\infty(N\bN MA/MA)=C_c^\infty(N\bN)$.
The equality then follows from \cite[Prop.~I.5.21]{Hel84},  which implies
\begin{equation}\label{eq:Bruhat-integral}
\int_G f(g)\intd g
=c_2 \int_{N}\int_{\overline N}\int_M\int_A f(n\overline nma)\intd a\intd m\intd\overline n\intd n.
\end{equation}
\end{proof}

\begin{prop}\label{prop:Mass1}
There is a constant $c_{\mathcal G}$ such that
\[
\mathcal G_*\intd(gMA) = c_{\mathcal G}(d_0(kM, k'M))^2\, \big(\intd (kM)\otimes \intd (k'M)\big)|_{(\partial\hspc)^2_\Delta}.
\]
\end{prop}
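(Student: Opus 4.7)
The plan is to compute $\mathcal G_*\intd(gMA)$ directly by change of variables, using Lemma~\ref{lem:dgMA}(i) to parameterize $G/MA$ via $K\times N$, and then identify the resulting exponential Jacobian factor with $(d_0(kM,k'M))^2$.

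By Lemma~\ref{lem:dgMA}(i), the map $(k,n)\mapsto knMA$ from $K\times N$ onto an open dense subset of $G/MA$ sends $c_1\intd k\intd n$ to $\intd(gMA)$. Applying $\mathcal G$ yields $\mathcal G(knMA)=(knwP,\, knP)$. Since $n\in P$, the second coordinate simplifies to $kP$, identified with $kM\in K/M$. For the first coordinate, I would write $nw=w\bn_0$ with $\bn_0:=w^{-1}nw\in\bN$ (using that $\Ad(w)$ swaps $\n_+$ and $\n_-$), and then use the Iwasawa decomposition $\bn_0=k(\bn_0)a(\bn_0)n(\bn_0)$ to obtain $\bn_0P=k(\bn_0)P$, so $\mathcal G(knMA)=(kwk(\bn_0)M,\,kM)$.

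Next I would push the measure $c_1\intd k\intd n$ forward step by step. Decomposing $\intd k=\intd(kM)\intd m$ with $\mathrm{vol}(M)=1$, the $m$-factor drops out, producing $\intd(kM)$ for the second factor of $\mathcal G$. With $kM$ fixed, the change $n\mapsto \bn_0$ is a Lie-group isomorphism $N\to\bN$ whose Jacobian on Haar measures is constant (since $\Ad(w)$ is a $\langle\cdot,\cdot\rangle_{\g}$-isometry exchanging $\n_+$ and $\n_-$). The map $\bn_0\mapsto k(\bn_0)M$ from $\bN$ to a dense open subset of $K/M$ has Jacobian $e^{-2\rho H(\bn_0)}$ by \cite[Lemma~I.5.19]{Hel84}, and left multiplication by $kw\in K$ preserves the $K$-invariant measure. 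Combining these gives
\[
\mathcal G_*\intd(gMA)=c\,e^{2\rho H(\bn_0)}\,\intd(kM)\intd(k'M)
\]
for a positive constant $c$, on the dense open subset of $(\pl\hspc)^2_\Delta$ parameterized by the Bruhat cell.

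Finally, to match the right-hand side of the statement, I would identify the exponential factor with $(d_0(kM,k'M))^2$. Since $g=kn$ is already of the form $K\cdot\{e\}\cdot N$, $H(g)=0$; and $gw=kw\bn_0$ with $kw\in K$ gives $H(gw)=H(\bn_0)$. Therefore $e^{2\rho H(\bn_0)}=e^{2\rho(H(g)+H(gw))}$, which (using $w^{-1}aw=a^{-1}$ for $a\in A$ in rank one and that $M$ centralizes $A$) is right $MA$-invariant, hence descends to a function on $G/MA\cong(\pl\hspc)^2_\Delta$; by the chosen definition of the visual distance $d_0$ on $\pl\hspc$ this function equals $(d_0(kM,k'M))^2$ up to a constant absorbed into $c_{\mathcal G}$. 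The main obstacle is the careful bookkeeping of Jacobians across the three successive coordinate changes, made slightly delicate because the Bruhat and Iwasawa parameterizations cover only dense open subsets (their complements have measure zero, so do not affect the identity); a secondary subtlety is that the explicit value of $c_{\mathcal G}$ depends on normalization conventions for $d_0$ and for the various Haar measures, but only the form of the identity is used downstream in the proof of Lemma~\ref{lem:pairing_coordinate}.
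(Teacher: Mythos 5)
Your proof is correct, and it takes a genuinely different (more hands-on) route than the paper's. The paper's argument is essentially a one-liner: it tests against $f$, cites Lemma~\ref{lem:dgMA}(i), \cite[Prop.~2.11]{HHS12} (which identifies $d_0(gP,gwP)^{2}$ with $e^{2\rho(H(g)+H(gw))}$) and \cite[Thm.~I.5.20]{Hel84} (a ready-made double-$K$ integration formula), and concludes. You instead reconstruct that double-$K$ formula from scratch: you parametrize $G/MA$ by $K\times N$ via Lemma~\ref{lem:dgMA}(i), compute $\mathcal G(knMA)=(kwk_{KAN}(\bn_0)M,\,kM)$ with $\bn_0=w^{-1}nw$ explicitly, and then chain three Jacobians — the $M$-fiber factor, the constant-modulus conjugation $N\to\bN$, and the Iwasawa Jacobian $e^{-2\rho H(\bn_0)}$ from \cite[Lemma~I.5.19]{Hel84} — arriving at the density $e^{2\rho H(\bn_0)}=e^{2\rho(H(g)+H(gw))}$. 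Two small checks you should make explicit to be fully rigorous, but which do hold: the expression $kwk_{KAN}(\bn_0)M$ is genuinely independent of the choice of representative $k$ in $kM$ when one simultaneously conjugates $n$ by $M$ (this follows from $w^{-1}Mw=M$ and the $M$-equivariance of the Iwasawa projection), which is needed to justify dropping the $\intd m$ integral; and the right $MA$-invariance of $g\mapsto H(g)+H(gw)$ (using $w^{-1}aw=a^{-1}$), which you invoke to view the exponential as a function on $G/MA$. The only step you leave as an assertion — that $e^{2\rho(H(g)+H(gw))}=d_0(kM,k'M)^{2}$ — is exactly the content of \cite[Prop.~2.11]{HHS12}, so this is no more of a gap than the paper's own citation. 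In fact your explicit derivation lands directly on $\mathcal G_*(e^{-2\rho(H(g)+H(gw))}\intd(gMA))=c_{\mathcal G}\,\intd(kM)\intd(k'M)$, which is precisely the form used downstream in the proof of Lemma~\ref{lem:pairing_coordinate}, so your route is arguably better matched to the application than the paper's statement in terms of $d_0$.
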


\begin{proof}
Let $f\in  C^\infty_c((\partial\hspc)^2_\Delta)\subseteq C^\infty(\pl{\bf H}\times \pl{\bf H})$. Lemma~\ref{lem:dgMA}(i),
together with  \cite[Prop. 2.11]{HHS12} and \cite[Thm. 5.20]{Hel84}, shows that 
\[ 
\int_{(\pl{\bf H})^{(2)}_\Delta} f\,d_0^{-2}\, \mathcal G_*\intd(gMA)
=\int_{G/MA} f(gP,gwP)d_0(gP,gwP)^{-2}\intd(gMA)\]
is actually equal to 
\[
 c_1 \int_{K}\int_K f(kP,k'P)\intd k'\intd k=
 \int_{\pl{\bf H}\times \pl{\bf H}} f\, \intd(kM)\otimes \intd (k'M).
\]
\end{proof}

Recall the map 
$\psi: G\times_M \overline N\to G/K\times G/MA$ from \eqref{eq:def-psi} and note that 
$$\psi^{-1}(G/K\times N\overline NMA/MA)=(N\overline NMA\times \overline N)/M \cong N\overline NA\times \overline N.$$

Let $\intd (gK)$ be the $G$-invariant measure on $G/K$ normalized by the Killing metric on $G/K$. By our choices in Section~\ref{sec:rieman_sym_space} this coincides with the push-forward 
of the Haar measure on $G$ by the canonical projection $\mathrm{pr}_K: G\to G/K$. 
Further, let $\til{\mathrm{pr}}_M: G\times \bN\to G\times_M \bN$ be the canonical 
projection with respect to the $M$-action. 

\begin{prop}\label{prop:Mass2} There is a constant $c_\psi>0$
\[
(\psi^{-1})_*\big(\intd(gK)\otimes \intd(g'MA)\big)=
c_\psi ({\til{\mathrm{pr}}_M})_*(\intd g \otimes \intd \bn).
\]
\end{prop}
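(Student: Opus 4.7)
The plan is to test the claimed equality of measures against an arbitrary $f \in C_c^\infty(G/K \times G/MA)$ and reduce both sides to a common coordinate expression using compatible group decompositions. Throughout, I will work on the open dense Bruhat cell $N\bN MA \subset G$; the complement has zero Haar measure and so does not contribute.

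For the right-hand side $c_\psi \int_G \int_{\bN} f(g\bn K, gMA)\, dg\, d\bn$, I parametrize $g$ via the Bruhat decomposition $g = n\bn_0 m_0 a_0$, with $dg = c_2\, dn\, d\bn_0\, dm_0\, da_0$ by \eqref{eq:Bruhat-integral}. Direct algebra using $M \subset K$ and $[M, A] = 0$ yields $gMA = n\bn_0 MA$ and $g\bn K = n(\bn_0 \bn^{**}) a_0 K$, where $\bn^{**} := m_0 a_0 \bn a_0^{-1} m_0^{-1}$. Setting $\bn' := \bn_0 \bn^{**}$, the Jacobian of the change of variables $\bn \mapsto \bn'$ is $d\bn = e^{2\rho \log a_0}\, d\bn'$: left translation by $\bn_0$ and $M$-conjugation preserve Haar on $\bN$, while $\Ad(a_0)|_{\bar\n}$ acts on $\g_{-\alpha_0} \oplus \g_{-2\alpha_0}$ with total weight $-(m_{\alpha_0} + 2 m_{2\alpha_0})\alpha_0 = -2\rho$. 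The $m_0$-integration collapses to $\tu{vol}(M)=1$, leaving
\[
\int_G \int_{\bN} f(g\bn K, gMA)\, dg\, d\bn = c_2 \int f(n\bn' a_0 K, n\bn_0 MA)\, e^{2\rho \log a_0}\, d\bn'\, da_0\, d\bn_0\, dn.
\]

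For the left-hand side $\int_{G/K}\int_{G/MA} f\, dhK\, dg'MA$, Lemma~\ref{lem:dgMA}(ii) parametrizes $G/MA$ by $g'MA = n\bn_0 MA$ with $dg'MA = c_2\, d\bn_0\, dn$. For $G/K$ I use the Iwasawa decomposition $G = \bN A K$ (obtained from the standard $KAN$ formula by conjugation with the Cartan involution $\theta$), which gives $dg = e^{2\rho \log a}\, d\bn\, da\, dk$ and hence $dgK = e^{2\rho \log a}\, d\bn\, da$ in the coordinates $gK = \bn a K$. Because $dhK$ is left $G$-invariant, left translation by $n \in N$ does not change it, so in the shifted coordinates $hK = n\bn' a_0 K$ I still have $dhK = e^{2\rho \log a_0}\, d\bn'\, da_0$, and the LHS collapses to exactly the same expression as the RHS, which establishes the identity with $c_\psi = 1$ under these normalizations.

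The main obstacle is the careful bookkeeping of the anisotropic $\Ad(A)$-weights: the factor $e^{-2\rho \log a_0}$ from the Jacobian of $\bn \mapsto a_0 \bn a_0^{-1}$ on $\bN$ must exactly balance the $e^{2\rho \log a_0}$ produced by Iwasawa integration on $G/K$, both originating from the same root-space multiplicities $m_{\alpha_0}, m_{2\alpha_0}$ but entering with opposite signs; any sign error would destroy the proportionality. A secondary nuisance is verifying that $\psi$ is genuinely a diffeomorphism on the relevant open dense set, which follows from $G = A\bN K$ (global Iwasawa) combined with $A\bN \cap K = \{e\}$ and the Bruhat intersection $\bN \cap KA = \{e\}$, so that the explicit inverse $\psi^{-1}(hK, gMA) = [g\, a(g^{-1}h), \bn(g^{-1}h)]$ mentioned after \eqref{eq:def-psi} is well-defined.
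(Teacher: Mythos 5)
Your proof is correct, and it takes a genuinely different and shorter route than the paper's. Both proofs test the measure identity against a test function and use Lemma~\ref{lem:dgMA}(ii) (i.e.\ \eqref{eq:Bruhat-integral}) to parametrize $G/MA$ by $N\times\bN$. From there the strategies diverge. The paper parametrizes $G/K$ by $AN$ (with unit Jacobian), substitutes the explicit formula $\psi^{-1}(hK,g'MA)=[g\,a_{A\bN K}(g^{-1}h),\bn_{A\bN K}(g^{-1}h)]$, and then untangles the resulting $A\bN K$-components through a sequence of Fubini rearrangements, a Weyl conjugation sending $N\to\bN$, and the nontrivial Jacobian computation of Lemma~\ref{lem:Jac_nu}. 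You instead go "forward": you Bruhat-decompose the $G$-variable $g=n\bn_0 m_0 a_0$ on the right-hand side, express $(g\bn K, gMA) = (n\bn' a_0 K, n\bn_0 MA)$ with $\bn'=\bn_0\,\Ad(m_0a_0)\bn$ and Jacobian $d\bn=e^{2\rho\log a_0}\,d\bn'$, and match this against the left-hand side written in the $n$-shifted $\bN A$-Iwasawa chart $hK=n\bn'a_0K$ with $dhK=e^{2\rho\log a_0}d\bn'\,da_0$. This sidesteps Lemma~\ref{lem:Jac_nu} entirely, makes the $m_0$-integration trivial, and the two root-space exponents $\pm 2\rho\log a_0$ cancel transparently — a cleaner and more conceptual derivation. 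The only overreach is the parenthetical claim that $c_\psi=1$: that would require checking that the normalization constant in the $\bN AK$ Iwasawa formula on $G/K$ coincides with the Bruhat constant $c_2$, which you did not do; but since the proposition only asserts existence of some $c_\psi>0$, this is harmless.
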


\begin{proof}
For $h\in C^\infty_c(G/K\times N\overline NMA/MA)=C^\infty_c(G/K\times N\overline N)$ we  compute
\begin{eqnarray*}
&&\int_{G/K\times N\overline NMA/MA} h(gK,g'MA)\, \intd(gK)\otimes\intd(g'MA)\\
&=&c_1 \int_{G/K}\int_{\overline N}\int_{N} h(gK, n\overline nMA)\, \intd n\, \intd\overline n\,
\intd (gK) \\
&=&c_1c_2 \int_A\int_{N}\int_{\overline N}\int_{N} h(a n'K, n \overline nMA)\,\intd n\, 
\intd\overline n\, \intd n'\, \intd a\\
\end{eqnarray*}
using Lemma~\ref{lem:dgMA}(ii). 
Now, if $\til h\in C^\infty_c(N\overline NMA\times \overline N)^M=C^\infty_c(N\overline NA\times \overline N)$, we obtain
\begin{eqnarray*}
&&\int_{(N\overline NMA\times \overline N)/M} \til h\, (\psi^{-1})_*(\intd(gK)\otimes\intd(g'MA))\\
&=&\int_{G/K\times N\overline NMA/MA} (\til h\circ \psi^{-1})  \intd(gK)\otimes\intd(g'MA)\\
&=& c_1c_2\int_A\int_{N}\int_{\overline N}\int_{N} (\til h\circ \psi^{-1})(a n'K, n \overline nMA)\, \intd n\intd\overline n\intd n'\intd a\\
&=& c_1c_2\int_A\int_{N}\int_{\overline N}\int_{N} \til h\big(n\overline n\,a_{A\bN K}((n\overline n)^{-1}a n'), \overline n_{A\bN K}((n\overline n )^{-1}a n')\big)\intd n\intd \overline n\intd n'\intd a.
\end{eqnarray*}
We now use Fubinis theorem and arrange the integrals such that the $\intd n'$ integral is the inner integral. Then the invariance of $\intd\overline n'$ by left multiplication implies that 
the latter integral equals
\[
c_1c_2\int_{N}\int_\bN\int_A\int_{N} \til h\big(n\overline n\,a_{A\bN K}(\overline n^{-1}a n'), \overline n_{A\bN K}((\overline n^{-1}a n')\big) 
\intd n'\intd a\intd\overline n\intd n.
\]
Reordering the integrals such that the $\intd a$ integral is the interior integral 
and using the invariance of $\intd a$ we can transform our expression to
\[
c_1c_2\int_{N}\int_\bN\int_A\int_{N} \til h(n\overline n\,a, (a^{-1}\overline n^{-1}a)\,
\overline n_{A\bN K}(n'))\intd a\intd  n'\intd\overline n\intd n.
\]

Note that $N\to \bN,\  n'\mapsto w^{-1}n'w$ is measure preserving. Therefore we can rewrite this integral as 
\[
c_1c_2\int_{N}\int_A \int_\bN\int_\bN\til h(n\overline  n\,a,
(a^{-1}\overline n^{-1}a)\,
\overline n_{A\bN K}(w\overline n' w^{-1}))\intd \overline n'\intd \overline n\intd a\intd n. 
\]
Moreover we have that the map $\nu:\bN\to\bN, \bn\to \bn_{A\bN K}(w\bn w^{-1})$ has Jacobian equal to one (see Lemma~\ref{lem:Jac_nu}). Thus we obtain
\[
c_1c_2\int_{N}\int_A \int_\bN\int_\bN\til h(n\overline n\,a,
(a^{-1}\overline n^{-1}a)\, \overline n')\intd \overline n'\intd \overline n\intd a\intd n.
\]
Using again the Fubini trick, this time with the invariance of $\intd \bn'$ we obtain
\[
c_1c_2\int_{N}\int_A \int_\bN\int_\bN\til h(n\overline n\,a,\overline n')\intd \overline n'\intd \overline n\intd a\intd n
=c_1c_2\int_{G}\int_\bN\til h(g,\overline n')\intd n'\intd g,
\]
where we use \eqref{eq:Bruhat-integral} for the final equality. 
In summary we have shown
\[
c_1c_2\int_{(N\bN MA\times \bN)/M} \til h\,(\psi^{-1})_*(\intd(gK)\otimes\intd(g'MA)
=c_1c_2\int_{N\bN A}\int_\bN\til h\intd \overline n'\intd g.
\]
Since $\psi$ is a diffeomorphism so that the measure zero set $G\setminus(N\bN MA)$ gets mapped to a measure zero set, namely $(N\bN AM\times \bN)/M$, this  proves the claim.
\end{proof}
\begin{lem}\label{lem:Jac_nu}
 The diffeomorphism 
 \[
  \nu:\Abb{\bN}{\bN}{\bn}{\bn_{A\bN K}(w\bn w^{-1})}
 \]
 has Jacobi determinant $|\det D\nu|=1$.
\end{lem}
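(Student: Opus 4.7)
My plan is to factor $\nu$ and treat the pieces separately. Write $\nu = \sigma \circ C_w$, where $C_w(\bn) := w\bn w^{-1}$ is conjugation by $w$ (a diffeomorphism $\bN \to N$, since $\operatorname{Ad}(w)$ swaps $\n_-$ with $\n_+$), and $\sigma\colon N \to \bN$, $n \mapsto \bn_{A\bN K}(n)$, is the Iwasawa projection extracting the $\bN$-component of the $A\bN K$-decomposition. It suffices to show that each of $C_w$ and $\sigma$ is volume-preserving with respect to the Haar measures on $\bN$ and $N$.

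For $C_w$ the argument is structural. Identify $\bN \cong \n_-$ and $N \cong \n_+$ via $\exp$, so that the Haar measures $\intd\bn$ and $\intd n$ correspond to Lebesgue measures on $\n_\pm$ for the inner product $\langle\cdot,\cdot\rangle = -\mc{K}(\cdot, \theta\cdot)$, in the consistent normalizations fixed in Section~\ref{sec:rieman_sym_space}. Under these identifications $C_w$ becomes the linear isomorphism $\operatorname{Ad}(w)|_{\n_-}\colon \n_- \to \n_+$. Since $w\in K$ we have $\theta(w)=w$, so $\theta\,\operatorname{Ad}(w) = \operatorname{Ad}(w)\,\theta$; combined with $\operatorname{Ad}(w)$-invariance of the Killing form this gives $\langle \operatorname{Ad}(w)X, \operatorname{Ad}(w)Y\rangle = \langle X,Y\rangle$, so $\operatorname{Ad}(w)$ is a linear isometry and has Jacobian $1$.

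For $\sigma$ I would use the explicit rank-one formulas of \cite[Thm IX.3.8]{Hel78} for the $A\bN K$-decomposition (already exploited in the proof of Lemma~\ref{lem:L-regularizing} to give $e^{\alpha_0 H(\bn)}$ in closed form). Writing $n = \exp(V_1+V_2)$ with $V_i \in \g_{i\alpha_0}$, those formulas express $a_{A\bN K}(n)$ and $\sigma(n) = \bn_{A\bN K}(n)$ as explicit smooth functions of $(V_1,V_2)$ involving only the Euclidean norms and the bracket $[V_1,V_1]$; a direct computation then shows the induced polynomial-rational map $\n_+ \to \n_-$ has Jacobian $1$ with respect to Lebesgue measures. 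Composing with Step~2 gives $|\det D\nu|\equiv 1$.

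The main technical obstacle is this last step: pushing through the explicit computation in the anisotropic setting where $\g_{-2\alpha_0} \neq 0$ (the real hyperbolic case, with $\g_{-2\alpha_0}=0$, is straightforward). A conceptually appealing alternative is the identity $\nu^2 = C_{w^2}$, which follows from Iwasawa uniqueness using $w a w^{-1} = a^{-1}$ for $a\in A$ together with $w^2\in M$; combined with compactness of $M$ this yields the cocycle relation $|\det D\nu(\bn)|\cdot |\det D\nu(\nu(\bn))|=1$, but passing from this to $|\det D\nu|\equiv 1$ requires further input (such as $M$-equivariance plus a transitivity argument), so the direct Helgason-formula route seems the most robust.
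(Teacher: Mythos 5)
Your proposed factorization $\nu = \sigma \circ C_w$ is sensible, and the argument that $C_w = \Ad(w)|_{\n_-}$ is a linear isometry with respect to $-\mc{K}(\cdot,\theta\cdot)$ (hence volume-preserving under $\exp$) is correct, and extends verbatim to conjugation $C_m$ by $m\in M$. But the other factor—that the Iwasawa projection $\sigma\colon N \to \bN$, $n \mapsto \bn_{A\bN K}(n)$, preserves Haar measure—is where all of the content of the lemma actually sits, and you leave it as ``a direct computation'' that you do not carry out and which you yourself call ``the main technical obstacle.'' As written the proposal is therefore not a proof.

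The more striking point is that the ``conceptually appealing alternative'' you sketch and then set aside is, in fact, the paper's own proof. Using Iwasawa uniqueness, $waw^{-1}=a^{-1}$ and $w^2\in M$, the paper shows $\nu=\nu^{-1}$ when one may choose $w$ with $w^2=e$ (Case~1), and $\nu^4=\mathrm{Id}$ when one must take $w$ of order four (Case~2), and then concludes in one line that ``the Jacobian determinant is $1$.'' Your reservation about this route applies to the paper's argument as it is written: the relation $\nu^k=\mathrm{Id}$ yields only the cocycle identity $\prod_{j=0}^{k-1}\bigl|\det D\nu\bigl(\nu^j(\bn)\bigr)\bigr| = 1$, and finite order alone does not force $|\det D\nu|\equiv 1$ on a noncompact connected space (the involution $x\mapsto 1/x$ of $\R_{>0}$ has Jacobian $1/x^2$). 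To close the argument one needs some further structural input—either that $|\det D\nu|$ is constant, or the explicit $A\bN K$ computation via Helgason's formulas that you propose—and neither appears in the paper nor in your proposal. So you have correctly localized the difficulty in both routes; what is missing in both is the actual justification.
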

\begin{proof}
Note that  $\til \bn=\bn_{A\bN K}(w\bn w^{-1}) $ implies that there are $\til a\in A$ and $\til k\in K$ such that $w\bn w^{-1}=\til a\til \bn\til k$.
But then 
\[
w^{-1}\til \bn w =w^{-1}\til a^{-1}w \bn w^{-1} \til k^{-1}w 
\]

so that $\bn=\bn_{A\bN K}(w^{-1}\til \bn w)$. 
\begin{description}
\item[Case 1] Suppose that $w=w^{-1}$.  Then $n=\nu(\til n)$, i.e. we have that $\nu=\nu^{-1}$. As a consequence the Jacobian determinant is $1$. 
\item[Case 2] Suppose that one cannot choose $w$ such that $w=w^{-1}$. Then one can choose $w$ of order $4$. The computation above shows that 
\[
\bn=\bn_{A\bN K}(ww^{-2}\til \bn w^2 w^ {-1} ) \nu(w^{-2}\til \bn w^2).
\]
On the other hand, $w=w^{-3}$ so that 
\[
\nu\big(\nu(\bn)\big)=\nu(\til \bn)= \bn_{A\bN K}(w \til \bn w^{-1})= \til n(w^{-1}(w^{-2}\til n w^2) w)= \nu^ {-1}(w^{-2}\til \bn w^2).
\]
Thus we deduce $\nu^4 =\tu{Id}$ and the Jacobian determinant is again equal to 
$1$.
\end{description}
\end{proof}
\providecommand{\bysame}{\leavevmode\hbox to3em{\hrulefill}\thinspace}
\providecommand{\MR}{\relax\ifhmode\unskip\space\fi MR }
\providecommand{\MRhref}[2]{%
  \href{http://www.ams.org/mathscinet-getitem?mr=#1}{#2}
}
\providecommand{\href}[2]{#2}

\end{document}